\newcommand{\eps}{\varepsilon}
\newcommand{\mc}[1]{\mathcal{#1}}
\newcommand{\mf}[1]{\mathfrak{#1}}
\newcommand{\bb}[1]{\mathbb{#1}}
\newcommand{\brm}[1]{\operatorname{#1}}
\theoremstyle{definition}
\theoremstyle{plain}
\newtheorem{thm}{Theorem}[section]
\newtheorem{lem}[thm]{Lemma}
\newtheorem{cor}[thm]{Corollary}
\newtheorem{obs}[thm]{Observation} 
\newtheorem{conj}[thm]{Conjecture}
\newcommand{\h}{h}
\newcommand{\Forb}{\operatorname{Forb}}
\newcommand{\wpn}{\chi_c}
\newcommand{\crit}{\operatorname{dang}}
\newcommand{\dang}{\operatorname{dang}}
\newcommand{\red}{\operatorname{red}}
\title{Typical structure of hereditary graph families. II. Exotic examples}
\author{
Sergey Norin\thanks{Department of Mathematics and Statistics, McGill University. Email: {\tt sergey.norin@mcgill.ca}. Supported by an NSERC Discovery grant.}\and \and
Yelena Yuditsky\thanks{Department of Mathematics, Ben-Gurion University of the Negev.  Email: {\tt yuditskyl@gmail.com}.}}
\begin{document}

\maketitle

\begin{abstract}
A graph $G$ is \textit{$H$-free} if it does not contain an induced subgraph isomorphic to $H$. The study of the typical structure of $H$-free graphs was initiated by Erd\H{o}s, Kleitman and Rothschild~\cite{EKR76}, who have shown that almost all $C_3$-free graphs are bipartite. Since then the typical structure of $H$-free graphs has been determined for several families of graphs $H$, including complete graphs, trees and cycles. Recently,  Reed and Scott~\cite{RS18} proposed a conjectural description of the typical structure of $H$-free graphs for all graphs $H$, which extends all previously known results in the area.

We construct an infinite family of graphs for which the Reed-Scott conjecture fails, and use  the methods we developed in the prequel paper~\cite{NorYud191} to describe the typical structure of $H$-free graphs for graphs $H$ in this family. 

Using  similar techniques, we construct an infinite family of graphs $H$ for which the maximum size of a homogenous set in a typical $H$-free graph is sublinear in the number of vertices, answering a question of Loebl et al.~\cite{LRSTT10} and Kang et al.~\cite{KMRS14}.  
\end{abstract}

\section{Introduction}

Let $\mc{F}$ be a  family of graphs. We say that $\mc{F}$ is \emph{hereditary} if $\mc{F}$ is closed under isomorphism and taking induced subgraphs. 
Let $H$ be a graph, we say that a graph $G$ is \textit{$H$-free} if it does not contain an induced subgraph isomorphic to $H$, and we denote by $\brm{Forb}(H)$ the family of all $H$-free graphs. Clearly, $\brm{Forb}(H)$ is hereditary. More generally, if $\mc{H}$ is a collection of graphs we say that a graph $G$ is \textit{$\mc{H}$-free} if $G$ is $H$-free for every $H \in \mc{H}$, and we denote by $\brm{Forb}(\mc{H})$ the family of all $\mc{H}$-free graphs.

In this paper we study the typical structure of graphs in $\brm{Forb}(H)$. Our main results are constructions of graphs $H$ for which this structure is more complex than in the previously known examples. 

For a family of graphs $\mc{F}$, let $\mc{F}^n$ denote the set of graphs in $\mc{F}$ with vertex set $[n]=\{1,2,\ldots,n\}$. 
We say that a property $\mc{P}$ holds for \emph{almost all graphs} in $\mc{F}$, if $$\lim_{n \to \infty}\frac{|\mc{F}^n \cap \mc{P}|}{|\mc{F}^n|} = 1.$$ 

The study of the  typical structure of graphs in $\brm{Forb}(H)$ was initiated by Erd\H{o}s, Kleitman and Rothschild~\cite{EKR76}.  They have shown that almost all $C_3$-free graphs are bipartite. Pr{\"o}mel and Steger~\cite{PS91, PS92} obtained a structural characterization of typical $C_4$ and $C_5$-free graphs. They have shown that vertices of almost all $C_4$-free graphs can be partitioned into a clique and a stable set,\footnote{I.e. almost all $C_4$-free graphs are split graphs.} and that for almost all $C_5$-free graph $G$ either the vertices of $G$ can be partitioned into a clique and a set inducing a disjoint union of cliques, or the
vertices of $G$ can be partitioned into a stable set and a set inducing a complete multipartite graph.

Recently, Reed and Scott~\cite{RS18} proposed a conjecture which informally states that a similar description of almost all $H$-free graphs is possible for any $H$. To state it precisely we need a few  definitions. A \emph{pattern} $\mf{F}=(\mc{F}_1,\ldots,\mc{F}_k)$ is a finite collection of hereditary families of graphs.  We say that a partition $\mc{P}=(P_1,P_2,\ldots,P_k)$ of the vertex set of a graph $G$ is an \emph{$\mf{F}$-partition} for a pattern $\mf{F}$ as above if $G[P_i] \in \mc{F}_i$ for every $i \in [k]$.  Let $\mc{P}(\mf{F})$ denote the family of all graphs admitting an $\mf{F}$-partition.  If $\mf{F}$ is a pattern with $|\mf{F}|=l$ such that every element of $\mf{F}$ isthe same family $\mc{F}$ then we refer to an $\mf{F}$-partition as  an \emph{$(\mc{F},l)$-partition} and denote $\mc{P}(\mf{F})$ by $\mc{P}(\mc{F},l)$.

A pattern $\mf{F}$ is \emph{$H$-free} if $H \not \in \mc{P}(\mf{F})$. 
We say that $\mf{F}$ is  \emph{sharply $H$-free} if it is $H$-free and for every $\mc{F} \in \mf{F}$ every minimal graph $J$ which is not in $\mc{F}$\footnote{That is every $J \not \in \mc{F}$ such that $J \setminus v \in \mc{F}$ for every $v \in \mc{F}$.} is isomorphic to a subgraph of $H$. It is easy to see that every maximal $H$-free pattern is sharply $H$-free, and so we restrict our attention to sharply $H$-free patterns. Note that, conveniently, for any graph $H$ and any integer $k$ there are finitely many sharply $H$-free patterns of size $k$, as every element of such pattern is completely determined by the collection of the subgraphs of $H$ which belong to it.
 
A structural description of $\brm{Forb}(H)$ along the lines of the results of~\cite{EKR76,PS91,PS92} in the language we have just introduced can now be stated as follows. 
For almost every $G \in \brm{Forb}(H)$ there exists a pattern $\mf{F}$ such that 
\begin{itemize}
	\item[(S1)] $\mf{F}$ is sharply $H$-free,
	\item[(S2)] $G \in \brm{P}(\mf{F})$,
	\item[(S3)] elements of $\mf{F}$ are ``structured".
\end{itemize} 

Let $\mc{C}=\brm{Forb}(\bar{K_2})$ denote the family of all complete graphs and let $\mc{S}=\brm{Forb}(K_2)$ denote the family of all edgeless graphs. Then the result of~\cite{EKR76} shows that
$\mf{F}=(\mc{S},\mc{S})$ satisfies the above conditions for $H=C_3$, while for $H=C_5$ we need to take to  $\mf{F}=(\mc{C},\brm{Forb}(P_3))$ or  $\mf{F}=(\mc{S},\brm{Forb}(\bar{P}_3))$, depending on $G$.

Note that $\mf{F}=(\brm{Forb}(H))$ trivially satisfies conditions (S1) and (S2), but does not give any insight in the structure of $\brm{Forb}(H)$. Thus we need to formalize condition (S3). We say that a pattern $\mf{F}$  is \emph{proper} if $\mc{F}^{n}\neq \emptyset$ for every $\mc{F} \in \mf{F}$ and every positive integer $n$. By Ramsey's theorem the above condition is equivalent to the requirement that for every $\mc{F} \in \mf{F}$ either $\mc{C} \subseteq \mc{F}$ or $\mc{S} \subseteq \mc{F}$ . 

For a pair of non-negative integers $s$ and $t$ let $\mf{F}(s,t)=(\mc{S},\ldots,\mc{S},\mc{C},\ldots,\mc{C})$ denote the proper pattern consisting of $s$ families of edgeless graphs and $t$ families of complete graphs, and let $\mc{H}(s,t)$ denote $\mc{P}(\mf{F}(s,t))$. Thus  $\mc{H}(s,t)$ is a family of all graphs whose vertex set can be partitioned into $s$ stable sets and $t$ cliques. For every proper pattern $\mf{F}$ we have $\mc{H}(s,t) \subseteq \mf{F}$ for some $s,t$ such that $s+t=|\mf{F}|$, and thus the following observation holds.
 
\begin{obs}\label{obs:colouring} For a graph $H$ and a positive integer $l$ the following are equivalent.
	\begin{itemize}
		\item  there exist non-negative integers $s$ and $t$ with $s+t=l$ such that $H \not \in \mc{H}(s,t)$,  
		\item there exists a proper $H$-free pattern $\mf{F}$ such that $|\mf{F}|=l$.
	\end{itemize}
\end{obs}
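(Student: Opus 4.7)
The observation is essentially a formalization of the remark immediately preceding it, so the proof plan is short and mostly bookkeeping.

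For the implication $(1)\Rightarrow (2)$, I would simply exhibit $\mf{F}(s,t)$ as the witness. Given $s,t$ with $s+t=l$ and $H\notin\mc{H}(s,t)=\mc{P}(\mf{F}(s,t))$, the pattern $\mf{F}(s,t)$ has size $l$, is $H$-free by assumption, and is proper because $\mc{S}$ and $\mc{C}$ each contain graphs on every vertex count. This direction is immediate from the definitions.

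For the implication $(2)\Rightarrow (1)$, let $\mf{F}=(\mc{F}_1,\ldots,\mc{F}_l)$ be a proper $H$-free pattern. Using the Ramsey-theoretic reformulation of properness already noted in the text, for each $i\in[l]$ at least one of $\mc{C}\subseteq\mc{F}_i$ or $\mc{S}\subseteq\mc{F}_i$ holds; choose one such containment for every $i$ (arbitrarily if both hold). Let $s$ be the number of indices for which $\mc{S}$ was chosen and $t$ the number of indices for which $\mc{C}$ was chosen, so $s+t=l$. I claim $\mc{H}(s,t)\subseteq\mc{P}(\mf{F})$: any graph $G$ on $[n]$ that admits a partition $(Q_1,\ldots,Q_s,R_1,\ldots,R_t)$ into $s$ stable sets and $t$ cliques can be reindexed so that $Q_j$ lands in the coordinate for which $\mc{S}\subseteq\mc{F}_i$ and $R_j$ in a coordinate for which $\mc{C}\subseteq\mc{F}_i$; the resulting partition is an $\mf{F}$-partition of $G$. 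Since $\mf{F}$ is $H$-free we have $H\notin\mc{P}(\mf{F})$, so $H\notin\mc{H}(s,t)$, establishing (1).

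There is essentially no obstacle here: the only nontrivial input is the Ramsey-based characterization of properness, which is explicitly asserted in the excerpt. The rest is a direct unpacking of the definitions of $\mc{P}(\mf{F})$, $\mc{H}(s,t)$, and $\mf{F}(s,t)$, together with the monotonicity that replacing each coordinate of a pattern by a subfamily can only shrink $\mc{P}(\cdot)$.
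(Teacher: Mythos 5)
Your proof is correct and mirrors the paper's intended argument: the forward direction instantiates $\mf{F}(s,t)$ directly, and the reverse direction uses the Ramsey-theoretic characterization of properness to extract the subpattern containment $\mc{H}(s,t)\subseteq\mc{P}(\mf{F})$ that the paper states immediately before the observation. Nothing to add.
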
 

The maximum integer $l$ satisfying the conditions of Observation~\ref{obs:colouring} for a graph $H$ is called \emph{the witnessing partition number of $H$} and is denoted be $\wpn(H)$. One can impose meaningful structure on the elements of a proper $H$-free pattern $\mf{F}$ by insisting simply that it has maximum possible size, i.e. 
 $|\mf{F}|=\wpn(H)$.  Combining conditions (S1),(S2) and (S3), we say that a pattern $\mf{F}$ is a \emph{clean $H$-free pattern} if  $\mf{F}$ is proper, sharply $H$-free, and    $|\mf{F}|=\wpn(H)$. We say that a 
 clean $H$-free pattern is a \emph{clean $H$-free profile} for a graph $G$ if $G \in \brm{P}(\mf{F})$.
 We can now precisely state the Reed-Scott's conjecture mentioned above.
 
 \begin{conj}\label{c:RS}
 For every graph $H$, almost every $H$-free graph has a clean $H$-free profile. 
 \end{conj}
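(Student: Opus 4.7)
The natural approach mirrors that of Erd\H{o}s--Kleitman--Rothschild~\cite{EKR76} and Pr\"{o}mel--Steger~\cite{PS91,PS92}: first establish an asymptotic enumeration $|\brm{Forb}(H)^n|=2^{(1-1/\wpn(H)+o(1))\binom{n}{2}}$, and then show that this count is dominated by graphs admitting a clean profile. The upper bound on the enumeration would come from Szemer\'{e}di's regularity lemma or the hypergraph container method applied to a typical $G \in \brm{Forb}(H)^n$: the reduced graph $R$ must avoid configurations forbidden by $H$-freeness, which by Observation~\ref{obs:colouring} should force $R$ to admit a proper $\mf{F}$-partition with $|\mf{F}|=\wpn(H)$, inducing a partition of $V(G)$ with few edges inside stable parts and few non-edges inside clique parts. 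The matching lower bound comes from counting graphs with a prescribed $\mc{H}(s,t)$-partition, where $s+t=\wpn(H)$.

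Given such an enumeration, a typical $H$-free $G$ admits an approximate clean profile, and the next step is a local cleanup: reallocate an $o(n)$ set of ``defect'' vertices so that the resulting partition is exact. The sharpness condition would be used as follows: if $G[P_i]$ contains a minimal forbidden graph $J \notin \mc{F}_i$, then $J$ is isomorphic to a subgraph of $H$, and the regularity of the cross-edges between parts should allow one to extend $J$ to an induced copy of $H$ in $G$, contradicting $H$-freeness. Entropy-type arguments would then bound the measure of graphs requiring too much cleanup to be negligible, upgrading the approximate profile to an exact clean $H$-free profile.

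The principal obstacle lies in the asymptotic enumeration itself. The identity $|\brm{Forb}(H)^n| \leq 2^{(1-1/\wpn(H)+o(1))\binom{n}{2}}$ tacitly assumes that the dominant proper $H$-free patterns are those built from $\mc{S}$ and $\mc{C}$, namely the $\mf{F}(s,t)$ of maximum size. But nothing in the definitions enforces this: a sharply $H$-free pattern $\mf{F}$ of size $\wpn(H)$ may have parts belonging to hereditary classes strictly larger than $\mc{C}$ or $\mc{S}$, and such a pattern generates strictly more labelled graphs than the corresponding $\mc{H}(s,t)$. If for some $H$ the extra flexibility from these richer classes produces a super-polynomial factor in the count, then the clean count is no longer asymptotically dominant, and typical $H$-free graphs will refuse to sit inside any clean profile. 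Ruling out such ``exotic'' patterns in general appears to require a delicate argument that I do not see how to supply in this framework, and I would expect the counterexamples announced in the abstract to arise precisely from this gap.
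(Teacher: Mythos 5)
This statement is a conjecture (Conjecture~\ref{c:RS}, due to Reed and Scott), and the paper does not prove it --- it \emph{disproves} it. Theorem~\ref{t:ARS} exhibits an infinite family of graphs $H$ (the $l$-ARS graphs of Section~\ref{s:ARS}) for which almost every $H$-free graph has \emph{no} clean $H$-free profile, so no proof along the lines you sketch can exist. You were right to stop and be suspicious; your final paragraph correctly anticipates that the paper's counterexamples live in a gap of the partition-and-cleanup scheme.

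Your diagnosis of that gap is slightly off target, though. You suggest the trouble is that a sharply $H$-free pattern of size $\wpn(H)$ may have parts richer than $\mc{C}$ or $\mc{S}$, and that such a pattern generates more graphs than $\mc{H}(s,t)$. That is true but harmless for the conjecture: a richer pattern that is still proper, sharply $H$-free, and of size $\wpn(H)$ is by definition a clean pattern, so the graphs it produces \emph{do} have clean profiles --- richer clean patterns only enlarge the set covered by the conjecture. The actual failure mode in the paper is different. For an $l$-ARS graph $H$, almost every $H$-free graph admits an $(\mc{I},l)$-partition, where $\mc{I}$ is the family of graphs with at most one non-edge; but by (ARS3) the graph $H$ itself admits an $(\mc{I},l)$-partition, so the pattern $(\mc{I},\ldots,\mc{I})$ is \emph{not} $H$-free. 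Typical $H$-free graphs avoid $H$ not by shrinking any part's class further, but via a constraint on the bipartite graphs \emph{between} parts: the two missing edges inside two different parts must have all four cross-pairs present (this is what $\mc{A}(l)$ in Theorem~\ref{t:ARSstructure} encodes). Such cross-part constraints simply cannot be expressed by the clean-profile formalism, which only records the hereditary class of each part. Any genuinely $H$-free pattern of size $l$ can use $\mc{I}$ for at most one part (see the proof of Theorem~\ref{t:ARS}), so the clean count is $O(n^{2}|\mc{H}^n(l,0)|)$ while $|\Forb^n(H)| = \Theta(n^{2l}|\mc{H}^n(l,0)|)$; the missing $n^{2(l-1)}$ factor is exactly the freedom in placing the one non-edge in the other $l-1$ parts, a freedom that no clean profile can license. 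So the gap is not that rich hereditary classes inflate the clean count --- it is that the dominant structural description requires edge constraints across parts, which lie entirely outside the ``partition into hereditary classes'' framework of the conjecture.
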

 
Conjecture~\ref{c:RS} has been verified for cliques \cite{EKR76}, cycles \cite{PS91, PS92, BB11,RS18}, trees \cite{RY18} and \textit{critical graphs} \cite{BB11}. 
 
Our first main result shows that  Conjecture~\ref{c:RS} is false in general.
  
\begin{thm}\label{t:ARS} 
 	There exists infinitely many graphs  $H$ such that almost every $H$-free graph has no clean $H$-free profile. 
\end{thm}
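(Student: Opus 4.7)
For each sufficiently large $k$ I would construct an explicit graph $H=H_k$ and argue, via a counting/container argument based on the machinery of the prequel~\cite{NorYud191}, that almost every $H$-free graph admits no clean $H$-free profile. The strategy is to exhibit a provable asymptotic gap between (i) the union of $\mc{P}(\mf{F})^n$ as $\mf{F}$ ranges over the (finitely many) clean $H$-free patterns, and (ii) $|\brm{Forb}(H)^n|$, by locating a single proper $H$-free pattern whose capacity strictly beats every clean one.

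\textbf{Step 1: The construction.} I would design $H=H_k$ as a carefully chosen substitution of small ``obstruction'' gadgets into a skeleton so that simultaneously:
\begin{itemize}
    \item the witnessing partition number $\wpn(H)=\ell_k$ is a specific value that can be computed directly from the construction;
    \item the sharply $H$-free proper patterns of size $\ell_k$ can be fully classified (there are only finitely many, by the remark in the excerpt), and each such pattern is forced, by the gadget placements in $H$, to contain at least one ``over-constrained'' class $\mc{F}$ whose minimal forbidden subgraphs push its extremal density strictly below $1-1/\ell_k$;
    \item there is an \emph{alternative} proper $H$-free pattern $\mf{F}^*$ (either of size $\ell_k$ but not sharply $H$-free, or of smaller size with a genuinely richer class) whose classes admit strictly more edges per vertex than any sharp profile allows.
\end{itemize}
The construction has to be sufficiently rigid that every clean profile is ``wasteful'' in handling the gadgets, while $\mf{F}^*$ absorbs them into a less restrictive hereditary class.

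\textbf{Step 2: Counting via the prequel.} To each proper pattern $\mf{F}$ the framework of~\cite{NorYud191} attaches an extremal coefficient $\ext(\mf{F})$ with
\[
|\mc{P}(\mf{F})^n| = 2^{\ext(\mf{F})\binom{n}{2} + o(n^2)},
\]
and similarly for $|\mc{P}(\mf{F})^n \cap \brm{Forb}(H)|$ up to a $2^{o(n^2)}$ error. I would invoke this to produce, on the one hand, the lower bound
\[
|\brm{Forb}(H)^n| \geq |\mc{P}(\mf{F}^*)^n \cap \brm{Forb}(H)| = 2^{\ext(\mf{F}^*)\binom{n}{2} + o(n^2)},
\]
and on the other hand the upper bound $|\mc{P}(\mf{F})^n| \leq 2^{\ext(\mf{F})\binom{n}{2} + o(n^2)}$ for each clean $H$-free pattern $\mf{F}$.

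\textbf{Step 3: Conclusion.} By the gap guaranteed in Step 1, $\ext(\mf{F}^*) > \max_{\mf{F}\text{ clean}} \ext(\mf{F})$. Summing over the finitely many clean $H$-free patterns,
\[
\sum_{\mf{F}\text{ clean}} |\mc{P}(\mf{F})^n| \leq 2^{\left(\max_{\mf{F}\text{ clean}} \ext(\mf{F})\right)\binom{n}{2} + o(n^2)} = 2^{-\Omega(n^2)} \cdot |\brm{Forb}(H)^n|,
\]
so the fraction of $H$-free graphs with a clean $H$-free profile vanishes as $n \to \infty$. Applying this for each $k\geq k_0$ yields infinitely many such $H_k$.

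\textbf{Main obstacle.} The substantive difficulty lies entirely in Step~1: engineering a graph $H_k$ whose sharply $H_k$-free patterns of maximum size are all genuinely over-constrained, while a competing non-clean pattern remains $H_k$-free yet uses a richer class. This requires simultaneous control of $\wpn(H_k)$, a full classification of the size-$\wpn(H_k)$ sharp patterns, and an extremal comparison between their capacities and that of $\mf{F}^*$. The counting half is largely imported from~\cite{NorYud191}; the novelty is producing these ``exotic'' $H_k$ and verifying the strict extremal gap.
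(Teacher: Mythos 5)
Your Step~3 cannot work for \emph{any} graph $H$, so the plan breaks before the construction matters. If $\mf{F}$ is a proper $H$-free pattern with $|\mf{F}|=\wpn(H)=l$, then by \cref{obs:colouring} $\mc{H}(s,l-s)\subseteq\mc{P}(\mf{F})$ for some $s$, giving $\ext(\mf{F})\geq 1-1/l$; conversely $\mc{P}(\mf{F})\subseteq\Forb(H)$ forces $\ext(\mf{F})\leq 1-1/\wpn(H)=1-1/l$ by Alekseev--Bollob\'{a}s--Thomason. Thus \emph{every} clean $H$-free pattern already saturates the extremal coefficient $1-1/l$, and this is the maximum attainable by any hereditary subfamily of $\Forb(H)$ whatsoever. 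There is no proper $H$-free pattern $\mf{F}^*$ with $\ext(\mf{F}^*)>\max_{\mf{F}\text{ clean}}\ext(\mf{F})$, no $2^{-\Omega(n^2)}$ gap, and your Step~1 claim that some sharply $H$-free class has extremal density strictly below $1-1/\ell_k$ contradicts the same containment $\mc{H}(s,t)\subseteq\mc{P}(\mf{F})$. (Moreover, any size-$l$ proper $H$-free pattern that is not sharply $H$-free can be enlarged classwise to a clean one $\mf{F}'$ with $\mc{P}(\mf{F}^*)\subseteq\mc{P}(\mf{F}')$, so that route is closed too.)

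The gap the paper actually exploits lives entirely inside the $2^{o(n^2)}$ error term that your Step~2 discards. For an $l$-ARS graph, $|\Forb^n(H)|=\Theta\bigl(n^{2l}|\mc{H}^n(l,0)|\bigr)$ by \cref{t:ARSstructure}(iii), while every clean $H$-free pattern $\mf{F}$ satisfies $|\mc{P}^n(\mf{F})|=O\bigl(n^{2}|\mc{H}^n(l,0)|\bigr)$; the quotient is $n^{2-2l}\to 0$, a \emph{polynomial} gap sitting on top of the shared exponential rate. The mechanism: \cref{c:ARSreduced} shows $\red(H)$ consists (above a bounded size) of graphs with at most one non-edge, so every class of a clean pattern is essentially contained in $\mc{I}$; condition (ARS3) then forces at most one class of a clean pattern to contain all of $\mc{I}$ (otherwise $H$ itself would admit the partition $\mc{X}_0$ as an $\mf{F}$-partition); \cref{l:uniqueness} then pins a typical graph in $\mc{P}(\mf{F})$ to a unique, essentially balanced partition with a single non-edge in one part, an $O(n^2)$ correction. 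Meanwhile the typical $H$-free graph realizes an $(l,2)$-constellation with $2l$ exceptional vertices, two per part, and its typical structure $\mc{A}(l)$ is \emph{not} $\mc{P}(\mf{F})$ for any pattern: it prescribes individual cross-part adjacencies, which the constellation/template formalism of \cref{t:critical2} and the $n^{|V(J)|}$ factor in \cref{t:speed} are built to track, and which patterns by definition cannot express. An argument staying at the level of patterns and exponential growth rates cannot see this distinction, and therefore cannot prove the theorem.
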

 
The proof of Theorem~\ref{t:ARS} can be vaguely outlined as follows.
The graphs $H$ satisfying the theorem are constructed  so that clean $H$-free patterns are severely restricted. To do this we ensure that $H$ admits a variety of partitions into graphs with simple structure. The vertex sets of parts of these partitions are chosen at random to further guarantee that $H$ does not admit the partitions into ``simple" graphs, except for the ones we specifically prescribed. 

\vskip 10pt
Using variants of this technique, we are able to generate examples of graphs $H$ such that almost all $H$-free graphs have given structure for a fairly wide variety of specifications.  Our second class of examples constructed this way answers a question from \cite{LRSTT10,KMRS14} related to the famous Erd\H{o}s-Hajnal conjecture, which we now state.

A \textit{homogenous set} in a graph is either an independent set or a clique. We denote by $h(G)$ the size of the largest homogenous set in a graph $G$. Erd\H{o}s and Hajnal made the following conjecture. 

\begin{conj} [Erd\H{o}s-Hajnal Conjecture\cite{EH89}]
For every graph $H$, there exists an $\varepsilon=\varepsilon(H)>0$ such that all $H$-free graphs $G$ have $\h(G)\ge |V(G)|^{\varepsilon}$. 
\end{conj}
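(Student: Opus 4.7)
This is the Erd\H{o}s-Hajnal conjecture, open since 1989 and one of the central problems in structural Ramsey theory. No proof is known in full generality, so the most I can offer is an outline of the dominant attack strategy together with an honest account of where it breaks down.

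The natural plan is induction on the structure of $H$. The base cases are graphs on at most four vertices (settled by classical Ramsey-type arguments) together with the bull and a handful of other small prime graphs (Chudnovsky--Safra, Chudnovsky--Seymour, and others). The inductive engine is the substitution lemma of Alon, Pach and Solymosi: if $H$ is obtained by substituting $H_2$ for a vertex of $H_1$ and both $H_1, H_2$ satisfy the conjecture, then so does $H$. This reduces the problem to \emph{prime} graphs, those admitting no nontrivial substitution decomposition, and the task becomes: for each such $H$, exhibit a suitable $\varepsilon(H) > 0$.

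For prime $H$ I would try to establish the stronger \emph{strong Erd\H{o}s-Hajnal property}: every $H$-free graph $G$ on $n$ vertices contains disjoint sets $A, B$ of size at least $n^{\delta(H)}$ with either complete or anticomplete bipartite adjacency. Iterating this property and always recursing inside the larger homogeneity side yields a clique or stable set of size $n^{\varepsilon}$ after $O(1/\delta)$ rounds. To prove the strong property, I would look for a Szemer\'edi-regularity dichotomy: in any $H$-free $G$, either a pair of regular blocks of density near $0$ or $1$ supplies the required $A, B$ directly, or all block densities are bounded away from the extremes and a probabilistic embedding argument locates an induced copy of $H$, contradicting the hypothesis.

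The decisive obstacle, and the reason the conjecture has resisted all attempts for over three decades, is that regularity-based embedding arguments lose tower-type factors: the $\delta$ produced by such a dichotomy decays far too quickly in $|V(H)|$ and in the regularity parameter, so iteration cannot possibly yield a polynomial-sized homogeneous set. Circumventing this --- either via a regularity-free induced-embedding theorem, or via an unconditional structural decomposition of induced-$H$-free graphs --- is the genuinely hard step, and no sufficiently general tool of this type is currently available. It is precisely because this conjecture is widely believed to require a fundamentally new idea that the present paper's complementary result, producing $H$ for which $h(G)$ is provably \emph{sublinear} in a typical $H$-free $G$, is of independent interest.
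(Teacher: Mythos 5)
You have correctly recognized that the statement is not a theorem of the paper at all but the Erd\H{o}s--Hajnal conjecture itself, which remains open; the paper states it purely as motivation for the weaker ``almost all $H$-free graphs'' relaxation that it actually works with, and offers no proof. Your honest refusal to manufacture a proof is the right call, and your survey of the standard substitution/prime-graph reduction, the strong Erd\H{o}s--Hajnal property, and the tower-type loss that defeats regularity-based embedding arguments is an accurate account of the state of the art and of why the problem is hard. The only thing I would flag is a mild scope mismatch: you were asked to compare against the paper's proof, and the correct observation --- which you make implicitly but could state outright --- is simply that there is nothing to compare against, since the paper never claims the conjecture. The paper's actual contribution in this direction is Theorem~\ref{t:P3vague} and Corollary~\ref{c:P3concrete}, which show that for certain $H$ the homogeneous set in a \emph{typical} $H$-free graph is provably sublinear, complementing rather than resolving the conjecture; your closing sentence gets this exactly right.
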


The conjecture appears to be very hard and is known to hold only for a few graphs $H$, see ~\cite{C14} for a survey. A way to relax the  conjecture in line with the subject of this paper is to consider almost all $H$-free graphs for a given graph $H$. Loebl et al.~\cite{LRSTT10} did just this proving the following.

\begin{thm}[Loebl et al. \cite{LRSTT10}]
For every graph $H$, there exists an $\varepsilon=\varepsilon(H)>0$ such that almost all $H$-free graphs $G$ have $h(G)\ge |V(G)|^{\varepsilon}$. 
\end{thm}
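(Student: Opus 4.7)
My plan is to combine the structural Alekseev--Bollob\'as--Thomason type theorem for hereditary families with a Tur\'an-style averaging argument. Set $k = \wpn(H)$. The starting point is the quantitative structural result that, for some constant $\delta = \delta(H) > 0$, almost every $G \in \Forb(H)^n$ is within Hamming distance $n^{2-\delta}$ of some graph in $\mc{H}(s,t)$ with $s+t = k$. Concretely, for almost every such $G$ there is a partition $V(G) = V_1 \cup \cdots \cup V_k$ and a choice of types $\tau_i \in \{\mc{S},\mc{C}\}$ for each part such that, writing $e_i$ for the number of ``wrong'' pairs in $V_i$ (edges if $\tau_i=\mc{S}$, non-edges if $\tau_i=\mc{C}$), we have $\sum_i e_i \le n^{2-\delta}$. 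This kind of bound is standard from the hypergraph container / transference machinery developed in the line of work containing~\cite{BB11} and the prequel~\cite{NorYud191}; it is the same engine that drives all known cases of \Cref{c:RS}, and it holds unconditionally (without assuming the conjecture) as a Hamming-closeness statement.

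Given such a partition, I would find a large homogenous set entirely inside one part. By pigeonhole, some part $V_i$ has $|V_i| \ge n/k$, and the total error satisfies $e_i \le n^{2-\delta}$. Suppose $\tau_i = \mc{S}$, so $G[V_i]$ is a graph on at least $n/k$ vertices with at most $n^{2-\delta}$ edges; the case $\tau_i = \mc{C}$ is symmetric by passing to the complement. The average degree of $G[V_i]$ is therefore at most $2 e_i / |V_i| \le 2k n^{1-\delta}$, so by the Caro--Wei (or simple greedy) bound
\begin{equation*}
    \alpha(G[V_i]) \;\ge\; \frac{|V_i|}{1 + 2k n^{1-\delta}} \;=\; \Omega\!\left(n^{\delta}\right),
\end{equation*}
which furnishes a stable set of size $n^{\varepsilon}$ in $G$ for any $\varepsilon < \delta$ and $n$ large enough. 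If instead $\tau_i = \mc{C}$, the same computation applied to the complement produces a clique of comparable size. Either way, $\h(G) \ge |V(G)|^{\varepsilon}$.

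The main obstacle is the quantitative structural input, i.e.\ upgrading the qualitative fact ``almost every $H$-free graph admits a $\wpn(H)$-colouring into stable sets and cliques up to $o(n^2)$ edges'' to the polynomial saving $n^{2-\delta}$. The qualitative statement already yields a sublinear but non-polynomial bound on $\h$, which is not enough; the polynomial bound requires invoking the stronger fact that the speed of $\Forb(H)$ decays exponentially below the extremal value at rate at least $n^{2-\delta}$, which in turn drives the Hamming-closeness estimate. Once that input is in hand, the passage to a large homogenous set via pigeonhole and Tur\'an is routine and, pleasantly, it is insensitive to which clean profile (if any) $G$ actually admits, which is why the argument survives the failures of \Cref{c:RS} exhibited in \Cref{t:ARS}.
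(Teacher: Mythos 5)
The paper states this theorem purely as a citation to [LRSTT10] and does not prove it, so there is no internal argument to compare against. Your reduction is sound: given that almost every $G\in\Forb^n(H)$ admits a partition $V(G)=V_1\cup\cdots\cup V_k$ with $k=\wpn(H)$ and total error $\sum_i e_i\le n^{2-\delta}$, the pigeonhole/Caro--Wei step on the largest part is routine and correct, giving $\h(G)=\Omega(n^{\delta})$. Your closing remark---that the argument never needs $G$ to admit a clean profile, which is why it survives the failures in \cref{t:ARS}---is exactly the right observation.

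The problem is the sourcing of the structural black box, which you correctly identify as carrying the whole weight. The statement you need is a polynomial-saving form of the Alekseev--Bollob\'as--Thomason stability theorem for hereditary families: almost every $G$ in a hereditary $\mc{P}$ with $\chi_c(\mc{P})=k$ has all but $n^{1-\delta}$ vertices admitting a perfect partition into $k$ cliques and stable sets, which gives your Hamming bound. That is a substantial theorem in its own right (it is essentially the main result of Alon, Balogh, Bollob\'as and Morris, \emph{The structure of almost all graphs in a hereditary property}), not a ``standard'' consequence of the hypergraph container method, which in fact postdates both that paper and [LRSTT10]. Nor can you extract it from the prequel: Theorems~\ref{t:critical} and~\ref{t:critical2} of [NorYud191] as quoted in this paper carry apex-freeness, smoothness and $s$-criticality hypotheses that do not hold for a general $\Forb(H)$, so they cannot serve as the universal input your argument requires. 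With the correct reference in hand your outline becomes a complete proof; as written, the step you yourself flag as ``the main obstacle'' is left unsupported by the citations you give for it.
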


Kang et al. \cite{KMRS14} have shown  that a stronger conclusion holds for almost all graphs $H$. We say that a graph $H$  has the \textit{asymptotic linear Erd\H{o}s-Hajnal property} if there exists $b>0$ such that  almost all $H$-free graphs $G$ satisfy $h(G)\ge b |V(G)|$. 

\begin{thm}[Kang et al \cite{KMRS14}]\label{thmKMRS}
Almost all graphs have the asymptotic linear Erd\H{o}s-Hajnal property.
\end{thm}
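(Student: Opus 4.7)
The plan is to take $H$ to be a uniform random graph $G(m,1/2)$ for some large constant $m$, and to show that with high probability over the choice of $H$, almost every $H$-free graph contains a homogeneous set of linear size. The driving observation is that pigeonhole already produces a linear homogeneous set whenever an $H$-free graph $G$ on $N$ vertices admits a partition into a bounded number of cliques and stable sets: if $G \in \mc{H}(s,t)$ with $s+t = k$, then some part has size at least $N/k$ and is itself a clique or stable set. Thus it suffices to produce such a structural decomposition, with $k$ depending only on $H$.

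First I would compute $\wpn(H)$ for random $H$. Standard estimates give $\alpha(H),\omega(H) = (2+o(1))\log_2 m$ with high probability, so any partition of $V(H)$ into stable sets and cliques uses at least $(1-o(1))m/(2\log_2 m)$ parts; hence $\wpn(H)$ equals some constant $k = k(H)$ of order $m/\log m$, independent of the eventual ground set size $N$. This $k$ will serve as the $1/b$ of Theorem~\ref{thmKMRS}.

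The substantive work is to classify the clean $H$-free patterns. I would argue that for a random $H$, every such pattern is of the form $\mf{F}(s,t)$ with $s+t = k$. Suppose $\mc{F}$ is a proper hereditary family strictly larger than $\mc{C}$ or $\mc{S}$. Then $\mc{F}$ has a rich infinite collection of minimal forbidden subgraphs, and sharp $H$-freeness forces each of them to be isomorphic to a subgraph of $H$. A union bound over the finitely many candidate families $\mc{F}$ whose minimal obstructions fit inside $H$, combined with the rigidity of a random graph (it has few non-trivial automorphisms and few non-trivial partitions into ``structured'' pieces), shows that with high probability no such $\mc{F}$ can appear in a clean profile of $H$. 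This step precisely \emph{rules out} the exotic partitions that are engineered by hand in the construction underlying Theorem~\ref{t:ARS}, and it is the principal obstacle of the proof.

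With the pattern classification in hand, the final step invokes the structural machinery of the prequel paper~\cite{NorYud191}, which guarantees that almost every graph in $\brm{Forb}(H)$ admits a clean $H$-free profile. Combined with the previous step, almost every $H$-free graph must lie in $\mc{H}(s,t)$ for some $s+t = k$; a finite union bound over the at most $k+1$ values of $s$ reduces the problem to one fixed $(s,t)$, and pigeonhole then yields a homogeneous set of size at least $N/k$, proving the theorem with $b = 1/k$.
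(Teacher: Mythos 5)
This statement is cited from Kang et al.~\cite{KMRS14}; the present paper does not give a proof of it, so there is no in-paper argument to compare yours against. Reviewing your proposal on its own merits, there is a genuine gap.

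Your final step asserts that ``the structural machinery of the prequel paper~\cite{NorYud191} \ldots\ guarantees that almost every graph in $\brm{Forb}(H)$ admits a clean $H$-free profile.'' No such unconditional guarantee exists anywhere --- this assertion \emph{is} the Reed--Scott conjecture (\cref{c:RS}), and the central result of the present paper (\cref{t:ARS}) is precisely that the conjecture fails. The tools actually available from~\cite{NorYud191} are conditional: \cref{t:critical} requires $\mc{F}$ to be apex-free and the relevant family to be smooth, and \cref{t:critical2} requires $\mc{F}$ to be $s$-critical for some $s$, delivering a $\mc{J}$-template rather than a clean profile. So you cannot simply quote the prequel to obtain the decomposition; you must first establish that $\Forb(H)$ falls within the hypotheses of one of those theorems for a random $H$.

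What your argument actually needs is that $H = G(m,1/2)$ is, with high probability, \emph{critical} in the Balogh--Butterfield sense (i.e.\ $0$-critical: every sufficiently large graph in $\red(H)$ is complete or edgeless). Given that, \cref{t:BBcritical} yields that almost every $G \in \Forb(H)$ lies in $\mc{H}(s,t)$ with $s+t = \wpn(H)$, and then your pigeonhole step legitimately produces a homogeneous set of size $|V(G)|/\wpn(H)$. Your step classifying clean $H$-free patterns gestures in this direction but does not actually establish criticality; phrases like ``the rigidity of a random graph \ldots\ shows that with high probability no such $\mc{F}$ can appear'' are not an argument. Concretely, one must show that for each of the finitely many non-homogeneous candidate shapes (stars, antistars, one-edge graphs, etc.), a random $H$ has the property that appending such a graph to an appropriate partition of $V(H)$ into cliques and stable sets is impossible --- i.e.\ that these graphs are $\Forb(H)$-dangerous. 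That is the substantive content of the Kang et al.\ proof, and it is exactly the step your write-up leaves as a black box. Without it, the proposal does not constitute a proof.
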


It is mentioned in \cite{LRSTT10} and \cite{KMRS14} that $P_3$, the path on $3$ vertices, does not have the asymptotic linear Erd\H{o}s-Hajnal property. More precisely, as a direct corollary to a result Aleksandrovskii (cf. \cite{Y95}), the authors of~\cite{LRSTT10,KMRS14} observed the following. 

\begin{obs}
	\label{o:P3homogenous}
Almost all $P_3$-free graphs $G$ have $h(G)=\Theta\left(\frac{|V(G)|}{\log |V(G)|}\right)$.
\end{obs}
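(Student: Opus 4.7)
The plan is to exploit the classical fact that a graph is $P_3$-free if and only if each of its connected components is a clique (any connected graph containing a non-edge contains an induced $P_3$), and then reduce to known asymptotics for uniformly random set partitions. Under this correspondence, a graph in $\Forb(P_3)^n$ is determined by, and determines, a set partition of $[n]$ whose blocks are the vertex sets of its connected components. Hence $|\Forb(P_3)^n|=B_n$, the $n$-th Bell number, and a uniformly random graph from $\Forb(P_3)^n$ corresponds bijectively to a uniformly random partition $\pi$ of $[n]$.

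Next I would translate $h(G)$ into statistics of $\pi$. If $\pi=\{B_1,\ldots,B_K\}$ is the partition associated to $G$, then $\omega(G)=\max_i|B_i|$, while any independent set meets each $B_i$ in at most one vertex and picking one vertex per block realises an independent set, so $\alpha(G)=K$. Writing $K(\pi)$ for the number of blocks of $\pi$ and $L(\pi)$ for the size of its largest block, this gives
\[
h(G) \;=\; \max\bigl(\alpha(G),\omega(G)\bigr) \;=\; \max\bigl(K(\pi),L(\pi)\bigr).
\]

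Then I would invoke the classical asymptotics of Aleksandrovskii for uniform set partitions, as packaged in the survey of Yakymiv~\cite{Y95}: with probability tending to $1$, a uniformly random partition $\pi$ of $[n]$ has $K(\pi)=(1+o(1))\,n/W(n)$, where $W(n)$ is the principal branch of the Lambert $W$ function (so $W(n)e^{W(n)}=n$ and $W(n)=(1+o(1))\log n$), and $L(\pi)=O(\log n)=o(n/\log n)$. Plugging these into the displayed identity gives $h(G)=(1+o(1))\,K(\pi)=\Theta(n/\log n)$ almost surely, which yields both the upper and lower bounds of the observation.

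The main technical content of the argument is the concentration of $K(\pi)$ and the upper tail bound on $L(\pi)$ under the uniform measure on partitions of $[n]$; these are exactly Aleksandrovskii's classical asymptotics, and no additional counting machinery is needed here. Once these are granted, the observation becomes a short two-step corollary of the bijection between $P_3$-free graphs and set partitions, in sharp contrast to the more delicate arguments needed elsewhere in the paper.
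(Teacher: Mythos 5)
Your proof is correct and follows exactly the route the paper implicitly intends: the paper states this as an observation of~\cite{LRSTT10,KMRS14}, attributing it to Aleksandrovskii's classical asymptotics for uniformly random set partitions (via~\cite{Y95}) without spelling out the argument. Your reduction---$P_3$-free graphs are disjoint unions of cliques, hence in bijection with set partitions of $[n]$, with $h(G)=\max(K(\pi),L(\pi))$, $K(\pi)\sim n/W(n)$ concentrated, and $L(\pi)=O(\log n)=o(n/\log n)$---is precisely the content behind that citation, so the proposal fills in the details the paper delegates to the references rather than taking a different path.
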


The authors in \cite{KMRS14} and  \cite{LRSTT10} asked if there exists graphs other than $P_3$ and, possibly, $P_4$ that do not have the asymptotic linear Erd\H{o}s-Hajnal property. We answer this question affirmativelty.

\begin{thm}\label{t:P3vague}
There exist infinitely many graphs which do not have the asymptotic linear Erd\H{o}s-Hajnal property.
\end{thm}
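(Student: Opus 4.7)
The plan is to adapt the construction behind Theorem~\ref{t:ARS} to produce graphs $H$ whose only clean $H$-free profiles force every block of the partition to lie in $\brm{Forb}(P_3)$ (or dually in $\brm{Forb}(\bar P_3)$), rather than in $\mc{S}$ or $\mc{C}$. Once such $H$ is in hand, the sublinearity of $h(G)$ for a typical $G\in\brm{Forb}(H)$ will follow, block by block, from the typical structure of $P_3$-free graphs recorded in Observation~\ref{o:P3homogenous}.

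For each sufficiently large integer $k$ I would construct a graph $H_k$ as follows. Fix a large integer $N=N(k)$, sample a uniformly random template partition $[N]=W_1\cup\cdots\cup W_k$, plant on each $W_i$ a uniformly random element of $\brm{Forb}(P_3)^{|W_i|}$, and insert between-part edges according to a prescribed random pattern. The parameters are tuned, as in the construction underlying Theorem~\ref{t:ARS}, so that with positive probability the planted partition certifies $\wpn(H_k)\ge k$, while a first moment argument over all $k$-partitions of $V(H_k)$ and over the finitely many hereditary families properly containing $\brm{Forb}(P_3)$ shows that $H_k$ admits no $k$-partition with a class in any such family; in particular no $k$-partition of $H_k$ has a class in $\mc{S}$ or $\mc{C}$. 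It follows that $\wpn(H_k)=k$ and that every clean $H_k$-free pattern $\mf{F}=(\mc{F}_1,\ldots,\mc{F}_k)$ has each $\mc{F}_i\subseteq\brm{Forb}(P_3)$, or dually each $\mc{F}_i\subseteq\brm{Forb}(\bar P_3)$.

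With the set of clean profiles pinned down, I would apply the entropy/container framework of the prequel paper~\cite{NorYud191} to conclude that almost every $G\in\brm{Forb}(H_k)^n$ admits a partition $V(G)=P_1\cup\cdots\cup P_k$ with $G[P_i]\in\brm{Forb}(P_3)$, with balanced parts $|P_i|=n/k+o(n)$, and with each $G[P_i]$ having the typical structure of a uniformly random disjoint union of cliques, so that $\omega(G[P_i])=O(\log n)$ and $\alpha(G[P_i])=O(n/(k\log n))$. Any clique in $G$ meets each $P_i$ in a clique of $G[P_i]$, so it has size $O(k\log n)=O(\log n)$; any independent set in $G$ meets each $P_i$ in an independent set of $G[P_i]$, so it has size at most $O(n/\log n)$. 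Hence $h(G)=O(n/\log n)=o(n)$ for almost every $G\in\brm{Forb}(H_k)$, which in particular precludes the asymptotic linear Erd\H{o}s--Hajnal property for $H_k$. Since $\wpn(H_k)=k$ is strictly increasing in $k$, the graphs $H_k$ are pairwise non-isomorphic, yielding the desired infinite family.

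The hard step is the construction itself: the randomness must be calibrated so that simultaneously (a) the planted $(\brm{Forb}(P_3),k)$-profile survives, (b) a union bound rules out, with positive probability, any $k$-partition of $H_k$ into classes drawn from a hereditary family properly containing $\brm{Forb}(P_3)$, and (c) the technical hypotheses of the container theorem of~\cite{NorYud191} are met so that the combinatorial conclusion about typical $H_k$-free graphs can be deduced. Step (b) is the key novelty relative to Theorem~\ref{t:ARS}: the between-part edge pattern must introduce enough ``local obstructions'' that forbid, say, a $k$-part partition in which some class is an independent set or a clique, while leaving the planted $\brm{Forb}(P_3)$-partition intact.
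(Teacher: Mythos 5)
Your high-level strategy is exactly the paper's: construct $H$ so that almost every $H$-free graph admits a balanced $(\Forb(P_3),l)$-partition, then combine Observation~\ref{o:P3homogenous} with the uniqueness/balance results from~\cite{NorYud191} to get $h(G)=O(n/\log n)$.  The final bound (a homogeneous set of $G$ meets each part in a homogeneous set of $G[P_i]$, hence $h(G)\leq l\max_i h(G[P_i])$) is also the paper's argument; your separate bookkeeping of $\omega$ and $\alpha$ is an unnecessary but harmless refinement.

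The construction step, however, is both where you admit the real work lies and where what you sketch cannot work as stated.  You propose to plant a $k$-partition $W_1\cup\cdots\cup W_k$ of $V(H_k)$ in which \emph{every} part induces a $P_3$-free graph.  But then $H_k\in\mc{P}(\Forb(P_3),k)$, so $\mc{P}(\Forb(P_3),k)\not\subseteq\Forb(H_k)$, and you cannot invoke Theorem~\ref{t:critical} (or any argument) to conclude that typical $H_k$-free graphs admit a $(\Forb(P_3),k)$-partition --- your planted witness already contains $H_k$, so it is not $H_k$-free.  The paper's jumble condition (S2) requires precisely the opposite: $H$ must admit \emph{no} $l$-partition into $P_3$-free parts.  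The way the paper makes $P_3$ dangerous while keeping $\wpn(H)=l$ is condition (S1): for each $s$, plant a partition into $s$ stable sets, $l-1-s$ cliques, and one part inducing a genuine $P_3$ (not a $P_3$-free graph).  This off-by-one, with the extra part being $P_3$ itself, is essential and missing from your sketch.  Relatedly, a planted $(\Forb(P_3),k)$-partition does not ``certify $\wpn(H_k)\ge k$,'' and the claim that no $k$-partition has a class in $\mc{S}$ or $\mc{C}$ is impossible (singletons belong to both) and in any case contradicted by the parts you planted.  Finally, ``uniformly random element of $\Forb(P_3)^{|W_i|}$'' gives no handle on producing, for every $s$, the needed partitions into $s$ stable sets, $l-1-s$ cliques and a $P_3$; the paper's actual construction (Section~4.2) uses highly structured $l$-squares, i.e.\ Latin squares with prescribed rows/columns/diagonals alternately stable and complete, precisely so that condition (S1) can be read off deterministically, and then randomizes only the ``free'' pairs so that (S2) holds a.a.s.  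So the proposal has the right target structure and conclusion, but the construction as sketched is internally inconsistent and would need to be replaced by something along the lines of the paper's $l$-pattern/Latin-square mechanism.
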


We also present a third class of examples of similar nature to the classes appearing in Theorems~\ref{t:ARS} and~\ref{t:P3vague}, but we postpone its description to the next section, as it requires more preparation to motivate.
	
Showing that the graphs $H$ in the families that we construct have the claimed properties requires us to analyze the structure of typical $H$-free graphs. In Section~\ref{s:prelim} we present the tools for such analysis, which we developed in~\cite{NorYud191}.  In Section~\ref{s:applications} we explicitly describe our families of exotic examples, including the families satisfying Theorems~\ref{t:ARS} and~\ref{t:P3vague}, and use the results from Section~\ref{s:prelim} to analyze the structure typical $H$-free graphs for graphs $H$ in these families. Finally, in Section~\ref{s:construction} we present constructions of infinite families of graphs with the properties specified in Section~\ref{s:applications}.

\section{Tools from~\cite{NorYud191}}\label{s:prelim}

In this section we present the results from~\cite{NorYud191}, which allow us to analyze the typical structure of typical graphs in $\Forb(H)$ for graphs $H$ constructed in the later sections.

We start by extending the definition of the witnessing partition number to general hereditary families.
The \emph{coloring number} $\chi_c(\mc{F})$ of a graph family $\mc{F}$ is the maximum integer $l$ such that $\mc{H}(s,l-s) \subseteq \mc{F}$ for some $0 \leq s \leq l$. Clearly, $\wpn(H)=\chi_c(\Forb(H))$ for every graph $H$. We say that $\mc{F}$ is \emph{thin} if $\chi_c(\mc{F}) \leq 1$.

The following is a key definition in our structural results. Let $\mc{F}$ be a hereditary graph family,  and let $l =\wpn(\mc{F})$. Let $\iota(J)$ denote the hereditary family of graphs isomorphic to induced subgraphs of a graph $J$.  We say that a graph $J$ is \emph{$\mc{F}$-reduced} if there exists an integer $0 \leq s \leq l-1$ such that 
$$\mc{P}(\iota(J),\mc{H}(s,l-1-s)) \subseteq \mc{F}.\footnote{I.e. $\mc{F}$ contains all graphs which admit a vertex partition into $l$ parts, such that the first part induces a subgraph of $J$, $s$ of the remaining parts are stable sets, and the rest are cliques.}$$ We say that $J$  is  \emph{$\mc{F}$-dangerous} if $J$ is not $\mc{F}$-reduced. Let $\red(\mc{F})$ and  $\dang(\mc{F})$ denote the families of all $\mc{F}$-reduced and $\mc{F}$-dangerous  graphs, respectively. For brevity we write  $\red(H)$ and  $\dang(H)$ instead of $\red(\Forb(H))$  and $\dang(\Forb(H))$, respectively.

Note that if $\mf{F}$ is a proper pattern such that $\mc{P}(\mf{F}) \subseteq \mc{F}$ and $|\mf{F}|=l$ then $\mc{T} \subseteq \red(\mc{F})$ for every family $\mc{T} \in \mf{F}$.  In particular, we have $\mc{P}(\mf{F}) \subseteq \mc{P}(\red(\mc{F}),l)$ for every such pattern $\mf{F}$. 
The description of typical structure of $\mc{F}$ given in \cref{t:critical} below relaxes Conjecture~\ref{c:RS} in the direction suggested by this observation:   Under several significant technical restrictions on $\mc{F}$ we show that almost all graphs in $\mc{F}$ belong to $\mc{P}(\red(\mc{F}),l)$ 

Let us now present these restrictions.
A \emph{substar} is a subgraph of a star, and an \emph{antisubstar} is a complement of a substar. We say that a hereditary family $\mc{F}$ is \emph{apex-free} if $\crit(\mc{F})$ contains a substar and an antisubstar. It turns out that assuming that the family  $\mc{F}$ is apex-free significantly simplifies analysis of its structure. We say that $\mc{F}$ is \emph{meager} if it is thin and apex-free.

We say that a hereditary family $\mc{F}$ with $l=\wpn(\mc{F}) \geq 2$ is \emph{smooth} if for every $\delta>0$ there exists $n_0$ such that $$|\mc{F}^n| \geq 2^{((l-1)/l -\delta)n}|\mc{F}^{n-1}|$$ for all  integers $n \geq n_0$.  As $|\mc{F}^n| \geq  2^{(l-1)n^2/{2l} - o(n^2)}$ for every  hereditary family as above, we expect ``reasonable" hereditary families to be smooth, yet it appears difficult to prove that a given  hereditary family is smooth without first understanding its structure.

\begin{thm}[\protect{\cite[Theorem 2.6]{NorYud191}}]\label{t:critical}
 Let $\mc{F}$ be an apex-free hereditary family, let  $l=\wpn(\mc{F}) \geq 2$, let $\mc{K} \subseteq \crit(\mc{F})$ be a finite set of graphs, and let $\mc{T} = \Forb(\mc{K})$. If $\mc{P}(\mc{T},l) \cap \mc{F}$ is smooth then almost all graphs  $G \in \mc{F}$  admit a  $(\mc{T},l)$-partition.  
\end{thm}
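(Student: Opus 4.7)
The plan is to follow the Pr\"omel-Steger / Alekseev architecture, modernized via graph containers, in a form adapted to the pattern setting. The goal is to show that graphs in $\mc{F}^n$ failing to admit a $(\mc{T}, l)$-partition form an $o(|\mc{F}^n|)$ fraction, through a coarse-partition step, a refinement step, and a careful counting comparison in which smoothness enters only at the end.

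First, I would establish that almost every $G \in \mc{F}^n$ admits a canonical vertex partition $(P_1, \ldots, P_l)$ in which each $G[P_i]$ is either nearly edgeless or nearly complete, with only $o(n)$ vertices misplaced. The key inputs are the Alekseev / Bollob\'as-Thomason upper bound $|\mc{F}^n| = 2^{(1-1/l+o(1))\binom{n}{2}}$ and a container or regularity argument that locates the $l$ parts. Apex-freeness enters here to guarantee, via the substar and antisubstar in $\crit(\mc{F})$, that no part collapses onto a universal or isolated vertex, so each part really behaves as if drawn from a thin family and the partition is canonical up to $o(n)$ ambiguity.

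Next, I would upgrade the coarse partition to a true $(\mc{T}, l)$-partition. For each $G$ without such a partition, I isolate a ``defect set'' $B \subseteq V(G)$ whose removal forces each coarse part into $\mc{T}$. Finiteness of $\mc{K}$ makes defects detectable locally (each vertex in $B$ participates in an induced copy of some $K \in \mc{K}$), and a Pr\"omel-Steger-style argument using the Alekseev upper bound should bound $|B|$ by $d = O(\log n)$ for all but an $o(|\mc{F}^n|)$ fraction of $G$. The reconstruction map $G \mapsto (G \setminus B, B, \text{adjacency data})$ is then injective, with at most $\binom{n}{d}$ choices for $B$, and each reattached vertex's adjacency is pinned down by the coarse partition up to $2^{(1-1/l)n + o(n)}$ choices. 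Iterated smoothness
\[
|(\mc{P}(\mc{T}, l) \cap \mc{F})^n| \geq 2^{((1-1/l) - \delta) dn}\,|(\mc{P}(\mc{T}, l) \cap \mc{F})^{n-d}|
\]
then provides the matching lower bound on the good count, and the combination forces the bad count to be $o(|(\mc{P}(\mc{T}, l) \cap \mc{F})^n|) \leq o(|\mc{F}^n|)$ for sufficiently small $\delta$.

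The step I anticipate as most delicate is driving the defect size down to $O(\log n)$. A weaker bound $d = o(n)$ follows relatively easily from the coarse partition, but it is insufficient for the counting comparison, since the $\binom{n}{d}$ overhead on reconstruction would erode the $\delta$-slack from smoothness. Pushing the defect to logarithmic scale requires using apex-freeness together with the finiteness of $\mc{K}$ to argue that beyond $O(\log n)$ defects the local obstructions propagate in a way that multiplies the effective reconstruction count beyond what the Alekseev upper bound permits, yielding the contradiction needed to suppress large defect sets.
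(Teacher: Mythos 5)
The paper does not prove this statement; it is Theorem~2.6 of the prequel~\cite{NorYud191} and is imported here as a black box, so there is no proof in the present text to compare your sketch against.

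On the merits of the sketch itself: the overall architecture (a stability/container step producing a coarse $l$-partition, a defect-removal refinement, and a reconstruction count closed off by smoothness) is the right genre, but there is a substantive gap at the quantitative heart of the counting. You say each reattached defect vertex's adjacency is ``pinned down \dots\ up to $2^{(1-1/l)n + o(n)}$ choices.'' That is too weak. Iterating smoothness $d$ times relates $|(\mc{P}(\mc{T},l) \cap \mc{F})^n|$ to $|(\mc{P}(\mc{T},l) \cap \mc{F})^{n-d}|$ by a factor of only $2^{((l-1)/l - \delta) d n - O(d^2)}$, and a per-vertex reconstruction budget of $2^{(1-1/l)n + o(n)}$ beats that by $2^{\delta d n + o(dn)}$ — the slack runs the wrong way, for every $d\geq 1$. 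What the argument actually requires, and what your sketch does not supply, is a fixed per-vertex entropy deficit: a vertex that cannot be absorbed into any part of the coarse partition while keeping $G$ in $\mc{F}$ must have at most $2^{(1-1/l)n - \eps n}$ admissible adjacency patterns, for some $\eps>0$ depending only on $\mc{F}$ and $\mc{K}$. That deficit is precisely where apex-freeness enters, and your account of its role (``no part collapses onto a universal or isolated vertex'') undersells it: the real point is that a dangerous substar and antisubstar in $\dang(\mc{F})$, together with the heredity of $\red(\mc{F})$, rule out defect vertices that are near-complete or near-anticomplete to a part and so could be reattached essentially for free, killing the saving. Once the $\eps n$ deficit is in hand, the $\binom{n}{d}\leq 2^{O(d\log n)}$ overhead is absorbed with room to spare, so your insistence on driving $d$ down to $O(\log n)$ is a red herring: the stability step already bounds $d=o(n)$, and any such $d$ (indeed any $d$ up to a small linear fraction of $n$) is then handled uniformly by the reconstruction comparison without a logarithmic truncation.
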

 
In addition to Theorem~\ref{t:critical} we will use an easy lemma which is helpful in verifying that conditions of \cref{t:critical} are satisfied. We say that a family  $\mc{F}$ is \emph{extendable} if there exists $n_0 \geq 0$ such that for every $G \in \mc{F}$ with $|V(G)| \geq n_0$ we have $G = G' \setminus v$ for some  $G' \in \mc{F}$  and $v \in V(G')$.  

 \begin{lem}[\protect{\cite[Lemma 2.9]{NorYud191}}]\label{l:cleanext}
  	Let $\mf{F}$ be a proper pattern such that every $\mc{T} \in \mf{F}$ is extendable and thin. Then  the family $\mc{P}(\mf{F})$ is smooth.
 \end{lem}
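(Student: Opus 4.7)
The plan is to prove smoothness by a direct extension count. Since every labeled $G'\in\mc{P}(\mf{F})^n$ has a unique parent $G=G'\setminus n\in\mc{P}(\mf{F})^{n-1}$, it suffices to show that most $G$ admit many labeled extensions. Write $\mf{F}=(\mc{T}_1,\ldots,\mc{T}_l)$. A short argument (lower bound from $\mc{P}(\mf{F})\supseteq\mc{H}(s,l-s)$ for an appropriate $s$, upper bound from a pigeonhole against the thinness of each $\mc{T}_i$) gives $\chi_c(\mc{P}(\mf{F}))=l$, and we may assume $l\ge 2$. Let $n_0$ be a common extendability threshold for all $\mc{T}_i$.

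Given $G\in\mc{P}(\mf{F})^{n-1}$, call $G$ \emph{balanced} if it admits an $\mf{F}$-partition $(P_1,\ldots,P_l)$ with every $|P_j|\in\{0\}\cup[n_0,\infty)$. For balanced $G$, pick an index $i$ minimizing $|P_i|$; by averaging, $|P_i|\le (n-1)/l$. If $|P_i|=0$, place the new vertex $n$ alone into $P_i$ (the singleton lies in $\mc{T}_i$ by properness). If $|P_i|\ge n_0$, extendability of $\mc{T}_i$ supplies at least one admissible set of edges from $n$ into $P_i$. In either case, the edges from $n$ into each other part $P_j$ are unconstrained since $G'[P_j]=G[P_j]\in\mc{T}_j$, and thus we obtain at least $2^{n-1-|P_i|}\ge 2^{(n-1)(l-1)/l}$ distinct labeled extensions of $G$ in $\mc{P}(\mf{F})^n$.

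The main obstacle is bounding the \emph{unbalanced} $G$, for which every $\mf{F}$-partition has some part of size $k\in\{1,\ldots,n_0-1\}$, where extendability provides no guarantee. Any such $G$ can be specified by choosing the position $i$ and the size $k$ of one such small part, its vertex set (at most $\mathrm{poly}(n)$ options since $k<n_0$), its induced subgraph ($O(1)$ options), the $2^{k(n-1-k)}=2^{O(n)}$ possible edges between the small part and its complement, and a graph on the complement lying in $\mc{P}(\mf{F}_{-i})$, where $\mf{F}_{-i}$ denotes $\mf{F}$ with its $i$th entry removed. Since $\chi_c(\mc{P}(\mf{F}_{-i}))=l-1$, the Alekseev--Bollob\'as--Thomason theorem yields $|\mc{P}(\mf{F}_{-i})^m|=2^{(1-1/(l-1))\binom{m}{2}+o(m^2)}$, whereas partition counting with $\mc{H}(s,l-s)\subseteq\mc{P}(\mf{F})$ gives $|\mc{P}(\mf{F})^{n-1}|\ge 2^{(1-1/l)\binom{n-1}{2}-O(n)}$. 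The resulting exponent gap of order $n^2/(l(l-1))$ shows that unbalanced $G$ form a $2^{-\Omega(n^2)}$-fraction of $\mc{P}(\mf{F})^{n-1}$. Combining with the balanced bound yields
\[
|\mc{P}(\mf{F})^n|\ge (1-o(1))\cdot 2^{(n-1)(l-1)/l}\cdot|\mc{P}(\mf{F})^{n-1}|\ge 2^{((l-1)/l-\delta)n}\cdot|\mc{P}(\mf{F})^{n-1}|
\]
for every $\delta>0$ and all $n$ sufficiently large, giving smoothness.
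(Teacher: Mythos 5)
This lemma is cited from the prequel \cite{NorYud191}; the present paper does not reproduce a proof, so no direct comparison is possible, and I evaluate your argument on its own terms. Your scheme is sound: for a ``balanced'' base graph you add the new vertex to a part of minimum size (at most $(n-1)/l$), use extendability to supply at least one valid neighborhood inside that part, and observe that the edges to the other $l-1$ parts are unconstrained, yielding at least $2^{(n-1)(l-1)/l}$ labeled extensions; and the count of ``unbalanced'' base graphs is controlled by deleting a bounded part and landing in $\mc{P}(\mf{F}_{-i})$, whose speed $2^{(1-1/(l-1))\binom{n}{2}+o(n^2)}$ (via Alekseev--Bollob\'as--Thomason and $\chi_c(\mc{P}(\mf{F}_{-i}))=l-1$) is $2^{-\Omega(n^2)}$ times $|\mc{P}(\mf{F})^{n-1}|\ge 2^{(1-1/l)\binom{n-1}{2}-O(n)}$. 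Two elisions are worth filling in. First, the asserted ``short argument'' for $\chi_c(\mc{P}(\mf{F}))=l$: the lower bound is immediate from properness, and the upper bound follows by estimating $|\mc{P}(\mf{F})^n|\le l^n\cdot\prod_i\max_m|\mc{T}_i^m|\cdot 2^{(1-1/l)\binom{n}{2}}\le 2^{(1-1/l)\binom{n}{2}+o(n^2)}$, using thinness of each $\mc{T}_i$ (hence $|\mc{T}_i^m|=2^{o(m^2)}$ by ABT, since properness forces $\chi_c(\mc{T}_i)=1$) together with convexity, and then reading $\chi_c=l$ off ABT again. Second, the case $|P_i|=0$ needs the observation that properness plus heredity places $K_1$ in every $\mc{T}_i$, which you do note in passing. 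With those points spelled out the proof is complete, though it leans on the full ABT theorem twice, which is a fairly heavy tool for a single-step speed inequality; the prequel may have a more self-contained argument.
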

 
Our applications of Theorem~\ref{t:critical} use not only existence of a structured partition of a typical graph, but the facts that such a partition is unique and essentially balanced in the following sense. We say that a partition $\mc{X}$ of an $n$ element set is \emph{$\eps$-balanced} if $|X - n/|\mc{X}|| \leq n^{1 - \eps}$ for all $X \in \mc{X}$. 

 \begin{lem}[\protect{\cite[Corollary 2.11]{NorYud191}}]\label{l:uniqueness}  Let $\mf{F}$ be a proper pattern such that every $\mc{T} \in \mf{F}$ is  meager and extendable. Then there exists $\eps>0$ such that  almost  all graphs in $\mc{P}(\mf{F})$ admit a unique  $\mf{F}$-partition, and such a partition is $\eps$-balanced.   		
 \end{lem}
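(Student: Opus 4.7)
The plan is to proceed in two stages: first show that almost every $G\in\mc{P}(\mf{F})$ admits an $\eps$-balanced $\mf{F}$-partition, then show that almost every such $G$ has a \emph{unique} $\mf{F}$-partition. Throughout, write $l=|\mf{F}|$ and $\mf{F}=(\mc{T}_1,\ldots,\mc{T}_l)$; each $\mc{T}_i$ is thin, apex-free, and extendable.

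For the balance step, I would estimate, for each ordered partition $\mc{X}=(X_1,\ldots,X_l)$ of $[n]$, the number of graphs on $[n]$ admitting $\mc{X}$ as an $\mf{F}$-partition, which is at most $\prod_i|\mc{T}_i^{|X_i|}|\cdot 2^{\sum_{i<j}|X_i||X_j|}$. Since each $\mc{T}_i$ is thin, the Alekseev--Bollob\'as--Thomason bound gives $|\mc{T}_i^m|=2^{o(m^2)}$ (for an apex-free thin family one even expects $2^{O(m\log m)}$), so this factor is dominated by the cross-edge term. Writing $\sum_{i<j}|X_i||X_j|=(n^2-\sum|X_i|^2)/2$, strict convexity shows that a non-$\eps$-balanced partition loses $\Omega(n^{2-2\eps})$ in the exponent. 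Combined with the lower bound $|\mc{P}(\mf{F})^n|\ge 2^{(1-1/l)\binom{n}{2}-O(n)}$, which holds because $\mf{F}$ is proper and so $\mc{P}(\mf{F})\supseteq\mc{H}(s,l-s)$ for some $s$, summing over the $\le l^n$ partitions of $[n]$ shows that graphs whose only $\mf{F}$-partitions are not $\eps$-balanced form a vanishing fraction of $\mc{P}(\mf{F})^n$ for $\eps$ sufficiently small.

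For the uniqueness step, I would double-count triples $(G,\mc{X},\mc{X}')$ in which $\mc{X}\ne\mc{X}'$ are two distinct $\eps$-balanced $\mf{F}$-partitions of $G$. The refined partition with cells $Y_{ij}=X_i\cap X_j'$ has some off-diagonal cell non-empty; any $v\in Y_{ij}$ with $i\ne j$ has its adjacencies into $X_i\setminus v$ constrained by $\mc{T}_i$ and its adjacencies into $X_j'\setminus v$ constrained by $\mc{T}_j$. The apex-free hypothesis makes these restrictions genuinely biting: the forbidden substar in $\crit(\mc{T}_i)$ prevents the neighborhood of $v$ in $X_i\setminus v$ from looking ``too stable-like'', and the forbidden antisubstar prevents it from looking ``too clique-like''. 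The number of adjacency patterns from $v$ into a given other part $X_k$ available under both sets of constraints is therefore exponentially smaller (in a power of the part size) than the $2^{|X_k|}$ available to an unconstrained vertex. Encoding the perturbation taking $\mc{X}$ to $\mc{X}'$ and summing over the at most polynomially-many such encodings (for a bounded number of swapped vertices) then over the rest of the graph structure, I would obtain an overall count $o(|\mc{P}(\mf{F})^n|)$.

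The main obstacle is the uniqueness step, where the apex-free hypothesis must be exploited in a quantitative way. I expect a clean execution to split into cases according to whether $\mc{T}_i\supseteq\mc{S}$ or $\mc{T}_i\supseteq\mc{C}$ (and dually for $\mc{T}_j$), and to ultimately use the same container-style counting machinery that presumably underlies \cref{t:critical}. Once uniqueness is in hand, the $\eps$-balanced conclusion survives for free from the first stage, because non-balanced partitions were already discarded there.
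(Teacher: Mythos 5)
This lemma is cited verbatim from the prequel \cite{NorYud191} (as ``Corollary 2.11''); the present paper contains no proof of it, so there is no in-paper argument to compare against. Evaluating the proposal on its own terms: the first (balance) stage is sound in outline --- the cross-edge count $\sum_{i<j}|X_i||X_j|$ dominates, strict convexity penalizes imbalance by $\Omega(n^{2-2\eps})$ in the exponent, and since each $\mc{T}_i$ is thin the intra-part contribution is $2^{o(n^2)}$ (indeed $2^{O(n\log n)}$), which a union bound over the $l^n$ ordered partitions absorbs for small $\eps$. That matches the kind of argument one expects behind a corollary of a smoothness lemma such as \cref{l:cleanext}.

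The uniqueness stage has a genuine gap. You assert that apex-freeness makes the adjacency pattern of a vertex $v$ into a constrained part ``exponentially smaller'' than $2^{|X_k|}$, but this is exactly the quantitative content that needs to be established, and the substar/antisubstar intuition you give does not by itself deliver it: the constraint $G[X_j' ] \in \mc{T}_j$ is a global condition on the induced subgraph, not a pointwise condition on $v$'s neighbourhood, and whether it ``bites'' depends on the structure of $G[X_j'\setminus\{v\}]$. The clean fact you actually need --- and which is presumably what \cite{NorYud191} proves en route to their Lemma 2.12 (quoted here as \cref{l:uniqueness2}) --- is that a \emph{meager} hereditary family has only polynomially many one-vertex extensions of any fixed member; meagerness (thin \emph{and} apex-free) is precisely what rules out cases like forests or cographs, where a single graph can have $2^{\Theta(n)}$ extensions. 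Without stating and using that bound, the double-counting step does not close. Moreover, your encoding is restricted to ``a bounded number of swapped vertices'', but two $\eps$-balanced $\mf{F}$-partitions may differ on up to $\Theta(n^{1-\eps})$ vertices; the argument needs to handle that regime (more misplaced vertices give more constraints, but also more encodings), which requires a careful trade-off rather than the bounded-swap shortcut. Given the availability of \cref{l:uniqueness2} in the prequel, a shorter route is: prove the balance stage as you do, then for each $\eps$-balanced partition $\mc{X}$ apply the \cref{l:uniqueness2}-style statement to conclude that all but an $o(1)$ fraction of graphs extending any fixed $\mf{F}$-decomposition on $\mc{X}$ have $\mc{X}$ as their unique $(\mc{T},l)$-partition, and finish with a double count of (graph, $\eps$-balanced partition) pairs against $|\mc{P}(\mf{F})^n|$.
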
 

Let $(G_1,\ldots,G_l)$ be a collection of vertex disjoint graphs, and let $X=\cup_{i \in [l]}V(G_i)$. We say that a graph $G$ is an \emph{extension} of $(G_1,\ldots,G_l)$ if $V(G)=X$, and $G_i$ is an induced subgraph of $G$ for every $i \in [l]$.
 
\begin{lem}[\protect{\cite[Lemma 2.12]{NorYud191}}]\label{l:uniqueness2} Let $\mc{T}$ be a meager hereditary family, let $l$ be an integer, and let $\eps>0$ be real. Let $(G_1,\ldots,G_l)$ be a collection of graphs such that $G_i \in \mc{T}$ for every $i \in [l]$ and $\mc{X} =(V(G_1),\ldots,V(G_l))$ 
is  an $\eps$-balanced partition of $[n]$. Then $\mc{X}$ is the unique $(\mc{T},l)$-partition of $G$ for almost every extension $G$ of $(G_1,\ldots,G_l)$. 
 \end{lem}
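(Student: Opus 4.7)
The approach is to adapt the uniqueness argument underlying the proof of \cref{l:uniqueness} to the setting where the within-part graphs are prescribed rather than averaged over. Fix $(G_1,\ldots,G_l)$ as in the hypothesis and consider a uniformly random extension $G$; for any candidate $(\mc{T},l)$-partition $\mc{Y}=(Y_1,\ldots,Y_l)$ of $[n]$ with $\mc{Y}\ne\mc{X}$ up to reordering, introduce the cell variables $A_{ij}:=Y_i\cap V(G_j)$ and $a_{ij}:=|A_{ij}|$. The edges of $G$ inside each $V(G_j)$ are pinned by $G_j$, while the cross edges are independent fair coin flips; consequently each $G[Y_i]$ is the disjoint union $\bigsqcup_j G_j[A_{ij}]$ together with independent uniformly random bipartite graphs between every pair $A_{i,j_1},A_{i,j_2}$, and the events $\{G[Y_i]\in\mc{T}\}$ are mutually independent across $i\in[l]$.

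This yields the bound
\begin{equation*}
\Pr[\mc{Y}\text{ is a }(\mc{T},l)\text{-partition of }G]\;\le\;\prod_{i=1}^l\frac{|\mc{T}^{|Y_i|}|}{2^{\sigma_i(\mc{Y})}},\qquad \sigma_i(\mc{Y}):=\sum_{j_1<j_2}a_{i,j_1}a_{i,j_2}.
\end{equation*}
Writing $S(\mc{Y}):=\sum_i\sigma_i(\mc{Y})$, a union bound reduces the task to showing $\sum_{\mc{Y}\ne\mc{X}}\prod_i|\mc{T}^{|Y_i|}|\cdot 2^{-S(\mc{Y})}=o(1)$. Thinness of $\mc{T}$ supplies the speed bound $|\mc{T}^m|\le 2^{o(m^2)}$, while the $\eps$-balance of $\mc{X}$ controls the sizes of all cells in any admissible $\mc{Y}$ through standard multinomial estimates.

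I would then split the sum into two regimes according to the number $k$ of vertices in the ``wrong'' part relative to the best matching permutation of $\mc{X}$ to $\mc{Y}$. In the ``far'' regime $k\ge n^{1-\eps/2}$, one checks that $S(\mc{Y})\ge c\,kn/l$, which is large enough to absorb both the speed factor $2^{o(n^2)}$ and the crude $l^n$-sized union bound, yielding a contribution of $o(1)$. The main obstacle is the ``close'' regime $k<n^{1-\eps/2}$, where $S(\mc{Y})=\Theta(kn/l)$ and the speed bound alone is insufficient. This is where apex-freeness of $\mc{T}$ is decisive: the existence of a forbidden substar and a forbidden antisubstar forces any random neighborhood that is appended to a graph in $\mc{T}$ without leaving $\mc{T}$ to satisfy density constraints that occur with probability at most $2^{-cn/l}$, so each of the $k$ relocated vertices contributes an independent exponential saving factor. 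Summing $\binom{n}{k}\,l^k\cdot 2^{-ckn/l}$ over $1\le k<n^{1-\eps/2}$ gives $o(1)$, completing the proof. The crux of the argument is this apex-freeness-based per-vertex saving in the close regime; the rest is a disciplined union bound parallel to the corresponding step in \cref{l:uniqueness}, simplified by the fact that $\mc{X}$ is fixed in advance and complicated only by the pinning of the within-part graphs $G_1,\ldots,G_l$.
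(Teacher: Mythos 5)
The paper does not contain a proof of this lemma: it is imported verbatim from the prequel (\cite[Lemma 2.12]{NorYud191}), so there is no argument in the present source to measure yours against. What follows is therefore an assessment on the merits. Your overall architecture --- fix a candidate alternative partition $\mc{Y}$, exploit that the within-part graphs $G_1,\ldots,G_l$ are pinned while the cross-pairs are fair coins so that the events $\{G[Y_i]\in\mc{T}\}$ are independent across $i$ (disjoint edge supports), bound each factor, and union-bound over $\mc{Y}$, with an entropy estimate for partitions that relocate many vertices and an apex-freeness penalty of $2^{-cn/l}$ per relocated vertex otherwise --- is exactly the Pr{\"o}mel--Steger/Balogh--Butterfield style of argument these papers run, and it is very plausibly what \cite{NorYud191} does.

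There is, however, a concrete gap in your far regime. You invoke only the thin-family speed bound $|\mc{T}^m|\le 2^{o(m^2)}$ and then claim that $S(\mc{Y})\ge c\,kn/l\ge c\,n^{2-\eps/2}/l$ ``absorbs'' it. It does not: $o(n^2)$ need only be subquadratic, and a speed of order, say, $2^{n^2/\log\log n}$ would eventually dominate $2^{c\,n^{2-\eps/2}/l}$ for every fixed $\eps>0$ and $l$. To close the far regime you need the sharper consequence of thinness: $\chi_c(\mc{T})\le 1$ places $\mc{T}$ below the Balogh--Bollob\'as--Weinreich jump, so $|\mc{T}^m|\le m^{O(m)}=2^{O(m\log m)}$, giving $\prod_i|\mc{T}^{|Y_i|}|\le 2^{O(n\log n)}$, which is indeed dominated by $2^{c\,n^{2-\eps/2}/l}$. (Alternatively one can avoid the speed bound altogether by pushing the apex-freeness penalty into the far regime as well, since the per-vertex events live on disjoint edge sets and remain independent.) Two smaller points to make explicit if you write this up: the apex-freeness saving of $2^{-cn/l}$ requires that the ``base'' $Y_i\cap X_i$ retain size $(1-o(1))n/l$, which is exactly what the close-regime threshold $k<n^{1-\eps/2}$ and the $\eps$-balance of $\mc{X}$ buy you; and the independence of the per-vertex penalties for several vertices relocated into the same part should be recorded (it holds because their random edge-sets are disjoint). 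With the speed bound upgraded, the outline is sound.
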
   
 
The main application of Theorem~\ref{t:critical} in \cite{NorYud191}, which we will also need here, is a generalization of the following result of Balogh and Butterfield~\cite{BB11}. 

In~\cite{BB11} a graph $H$ is defined to be \emph{critical} if there exists an integer $n_0$ such that every $K \in \red(H)$ with $|V(K)| \geq n_0$ is either complete or edgeless. Thus a clean $H$-free profile of a graph $G$ corresponds to a partition of $V(G)$ in to cliques and stable sets (and potentially bounded size graphs), such that $H$ does not admit a partition with the same structure. 
The following characterization of critical graphs $H$ given in~\cite{BB11} shows that one indeed can find such a partition for almost all $H$-free graphs. It implies, in particular,  that critical graphs satisfy Conjecture~\ref{c:RS}.

\begin{thm}[\protect{\cite{BB11}}]\label{t:BBcritical}
	A graph $H$ is critical if and only if for almost every $G \in \Forb(H)$ we have $G \in \mc{H}(s,t)$ for some pair of non-negative integers $s$ and $t$ such that $s+t=\wpn(H)$ and   $\mc{H}(s,t) \subseteq \Forb(H)$.
\end{thm}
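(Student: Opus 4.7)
Set $l := \wpn(H)$; I assume $l \geq 2$, the cases $l \leq 1$ being trivial. My plan handles the two directions separately, with the ``critical $\Rightarrow$ typical $\mc{H}(s,t)$'' direction being the main content, obtained by invoking Theorem~\ref{t:critical}.

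For the forward direction, let $n_0$ witness the criticality of $H$, so that every $K \in \red(H)$ with $|V(K)| \geq n_0$ is complete or edgeless. I take $\mc{K}$ to be the finite set of graphs on exactly $n_0$ vertices that are neither complete nor edgeless; by criticality $\mc{K} \subseteq \dang(H)$. Setting $\mc{T} := \Forb(\mc{K})$, the observation that any graph with both an edge and a non-edge contains an induced $P_3$ or $\bar P_3$ (extendable to an induced $\mc{K}$-graph once the ambient size reaches $n_0$) forces every graph in $\mc{T}$ of size at least $n_0$ to be a clique or stable set. I then verify the hypotheses of Theorem~\ref{t:critical} for $\mc{F} := \Forb(H)$: apex-freeness holds because the substars $K_{1,k}$ and antisubstars $\bar K_{1,k}$ with $k \geq \max(2, n_0 - 1)$ are neither complete nor edgeless and thus $\Forb(H)$-dangerous by criticality; smoothness of $\mc{P}(\mc{T}, l) \cap \Forb(H)$ follows from Lemma~\ref{l:cleanext} applied to each pattern $\mf{F}(s, l - s)$ with $\mc{H}(s, l - s) \subseteq \Forb(H)$ (using that $\mc{C}$ and $\mc{S}$ are meager and extendable), combined with the observation that $\mc{P}(\mc{T}, l) \cap \Forb(H)$ has the same $(1 - 1/l)\binom{n}{2}$ leading-order count as $\bigcup_s \mc{H}(s, l - s)$. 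Theorem~\ref{t:critical} then yields a $(\mc{T}, l)$-partition of almost every $G \in \Forb(H)$; Lemma~\ref{l:uniqueness} renders it $\eps$-balanced, so each part has size at least $n_0$ and is thus a clique or a stable set. Hence $G \in \mc{H}(s, l - s)$ for some $s$, with $\mc{H}(s, l - s) \subseteq \Forb(H)$ since $G$ is $H$-free.

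For the reverse direction I argue by contrapositive. If $H$ is not critical, then for each $n$ there is $K = K_n \in \red(H)$ with $|V(K)| \geq n/l$ that is neither complete nor edgeless, together with an index $s^*$ satisfying $\mc{P}(\iota(K), \mc{H}(s^*, l - 1 - s^*)) \subseteq \Forb(H)$. I consider the family $\mc{A}_n$ of graphs on $[n]$ obtained by placing an induced labelled copy of $K$ on some $|V(K)|$-subset, an arbitrary $\mc{H}(s^*, l - 1 - s^*)$-graph on the complement, and arbitrary edges between; this family is contained in $\Forb(H)$ and has $2^{(1 - 1/l)\binom{n}{2} + o(n^2)}$ elements, matching the leading-order growth of $\bigcup \mc{H}(s,t)^n$. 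I then show that a positive density of $\mc{A}_n$ lies outside every $\mc{H}(s, t)$ with $s + t = l$: any such membership would require the $K$-vertices to be partitioned across the clique/stable parts compatibly with both $K$'s internal structure and the arbitrary cross-edges, and a Kleitman--Rothschild style computation shows that the fraction of cross-edge configurations permitting any of the finitely many admissible part assignments is $2^{-\Omega(n^2)}$. This contradicts the hypothesis that almost all $H$-free graphs lie in some $\mc{H}(s, t)$.

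I expect the hardest step to be the reverse direction's counting, which must rule out $\mc{H}(s,t)$-membership even when $K$ itself admits a clique/stable-set partition; the argument relies on near-$\eps$-balance from Lemma~\ref{l:uniqueness} to enumerate the few candidate partitions of the $K$-vertices and show generic cross-edges evade all of them. In the forward direction the crux is smoothness of $\mc{P}(\mc{T}, l) \cap \Forb(H)$, for which the meager--extendable framework of Lemma~\ref{l:cleanext} is indispensable.
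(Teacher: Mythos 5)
The paper does not prove Theorem~\ref{t:BBcritical} at all: it is imported verbatim from Balogh--Butterfield~\cite{BB11}, and the only comment on its proof is the remark after \cref{t:critical2} that the $s=0$ case of that theorem subsumes the forward direction. So there is no in-paper argument for you to match. Judged on its own terms, your forward direction contains two genuine gaps. First, you need $\mc{P}(\mc{T},l)\cap\Forb(H)$ to be smooth in order to apply \cref{t:critical}, and you try to get this from \cref{l:cleanext} applied to the sub-patterns $\mf{F}(s,l-s)$ together with ``same $(1-1/l)\binom{n}{2}$ leading-order count.'' That inference is invalid: smoothness is a statement about the ratio $|\mc{G}^n|/|\mc{G}^{n-1}|$, and agreement of $\log_2|\mc{G}^n|$ up to $o(n^2)$ leaves room for a $2^{\Theta(n)}$ discrepancy in that ratio, exactly the scale at which smoothness lives. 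Since $H$ may well lie in $\mc{H}(s',l-s')$ for some $s'$, $\mc{P}(\mc{T},l)\not\subseteq\Forb(H)$ in general, so you cannot sidestep the intersection the way \cref{t:P3structure} does. Second, your closing sentence ``$\mc{H}(s,l-s)\subseteq\Forb(H)$ since $G$ is $H$-free'' is a non sequitur: a single $H$-free $G\in\mc{H}(s,l-s)$ tells you nothing about whether $H$ itself admits that partition type. Ruling out the types with $H\in\mc{H}(s,l-s)$ requires a separate (typically counting) argument. Both issues vanish if you instead invoke \cref{t:critical2} with $s=0$: a critical family is $0$-critical by definition, an $(l,0)$-constellation has empty $J$ so a $\mc{J}$-template is precisely a partition into $s$ stable sets and $l-s$ cliques, and $\mc{P}(\mc{J})\subseteq\mc{F}$ in the conclusion \emph{is} the statement $\mc{H}(s,l-s)\subseteq\Forb(H)$.

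Your reverse direction is also not yet a proof. You construct a family $\mc{A}_n\subseteq\Forb(H)$ of the correct leading-order size, but ``same $2^{(1-1/l)\binom{n}{2}+o(n^2)}$ growth as $\bigcup_s\mc{H}^n(s,l-s)$'' does not imply $|\mc{A}_n|=\Omega(|\Forb^n(H)|)$, and so a positive-density claim inside $\mc{A}_n$ does not transfer to $\Forb^n(H)$. One needs a matching upper bound on $|\Forb^n(H)|$ that is tight at the polynomial (not merely $2^{o(n^2)}$) scale before the ``$2^{-\Omega(n^2)}$ fraction of cross-edge configurations'' estimate can be leveraged; this is where the actual work in~\cite{BB11} lies, and it is not available from the tools in Section~\ref{s:prelim} as stated (the speed bound \cref{t:speed} concerns constellations with a bounded embedded graph, whereas your $K=K_n$ grows linearly).
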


To describe and motivate our generalization of \cref{t:BBcritical} we need several additional definitions. We say that a set $S \subseteq V(G)$ is a \emph{core of a graph $G$} if for every $v \in V(G)$ either $v$ is adjacent to every vertex of $V(G)-S$ or  $v$ is not adjacent to any vertex in $S$. We say that a graph $G$ is an \emph{$s$-star} for an integer $s \geq 0$ if $G$ has a center of size at most $s$. Thus $0$-stars are exactly complete or edgeless graphs, and $1$-stars are induced subgraphs of stars or antistars.

We say that a hereditary family  $\mc{F}$ is \emph{$s$-critical} for an  integer $s \geq 0$ if there exists $n_0$ such that every $K \in \red(\mc{F})$ with $|V(K)| \geq n_0$ is an $s$-star. We say that a graph $H$ is $s$-critical if $\Forb(H)$ is $s$-critical. Thus $0$-critical graphs are exactly critical graphs.

Pr\"{o}mel and Steger~\cite{PS93}  have shown that $1$-critical graphs are exactly the extremal graphs according to a certain metric related to the structure of $\Forb(H)$.\footnote{The definition of critical graphs considered in~\cite{PS93} differs from our definition of $1$-critical graphs, but as we show in Section~\ref{s:PScritical} the definition we give here is equivalent.} Clearly, every $0$-critical graph is  $1$-critical, but, as noted in~\cite{BB11}, it is  not obvious whether the converse holds. Our final main result in the vein of Theorems~\ref{t:ARS} and~\ref{t:P3vague}  show that it does not.

\begin{thm}\label{t:BBPS}
	There exist infinitely many graphs which are $1$-critical, but not $0$-critical.
\end{thm}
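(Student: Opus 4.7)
The plan is to construct the required family via the randomized construction template used in Section~\ref{s:construction} for Theorems~\ref{t:ARS} and~\ref{t:P3vague}. For each sufficiently large integer $k$, I would engineer a graph $H_k$ such that $\red(H_k)$ contains stars (or antistars) of unbounded size, yet every sufficiently large $J\in\red(H_k)$ is a $1$-star. The first fact witnesses that $H_k$ is not $0$-critical, and the second shows $H_k$ is $1$-critical.

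Concretely, fix constants $l\geq 2$ and a split $s+t=l-1$, and build $H_k$ on a vertex set $V_0\cup V_1\cup\cdots\cup V_{l-1}$ where $V_0$ induces $K_{1,k}$, the parts $V_1,\ldots,V_s$ are stable sets of constant size, $V_{s+1},\ldots,V_{l-1}$ are cliques of constant size, and the bipartite edges between distinct parts $V_i,V_j$ are sampled independently at random. A union bound over the (boundedly many types of) alternative $\mf{F}$-partitions of $H_k$ into meager hereditary families---identical in spirit to the union bounds used for Theorems~\ref{t:ARS} and~\ref{t:P3vague}---shows that with positive probability $H_k$ admits no proper partition into simple families other than the one prescribed by construction. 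This pins down $\wpn(H_k)=l$, and the witness $H_k\notin\mc{P}(\iota(K_{1,k}),\mc{H}(s,t))$ places every induced substar of $K_{1,k}$ into $\red(H_k)$, so $H_k$ is not $0$-critical.

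For the $1$-critical direction, I would enumerate the possible isomorphism types of large graphs $J\in\red(H_k)$. If $J$ were not a $1$-star, then $J$ would contain two disjoint ``universal/isolated'' obstructions that cannot be simultaneously covered by a center of size at most one; exhibiting such a $J$ in a reduced partition of $H_k$ would force $H_k$ itself to admit a partition of a type already eliminated by the randomized construction. Combined with Lemma~\ref{l:cleanext} (to produce smoothness from extendability and thinness of the constituent families), Lemmas~\ref{l:uniqueness} and~\ref{l:uniqueness2} (for uniqueness and $\eps$-balance of the partition), and Theorem~\ref{t:critical} itself, this shows that almost every graph in $\Forb(H_k)$ has a $(\red(H_k),l)$-partition and that every sufficiently large member of $\red(H_k)$ is a $1$-star. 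Varying $k$ produces graphs of unbounded order and hence infinitely many pairwise non-isomorphic examples.

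The main obstacle will be the ``ruling-out'' step: eliminating every non-$1$-star $J$ from $\red(H_k)$ requires a uniform case analysis over partition types of $H_k$ together with a random model tuned so that all forbidden partition types fail simultaneously with positive probability. This is genuinely more delicate than the $0$-critical construction implicit in~\cite{BB11}, since $\red(H_k)$ must \emph{contain} arbitrarily large nontrivial stars while still being sharply constrained---the construction has to be permissive enough to admit large substars as reduced graphs, but restrictive enough to forbid more complex reduced graphs.
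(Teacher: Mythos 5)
The proposal captures the general spirit---build the examples by a randomized construction, and exploit the dichotomy between reduced and dangerous graphs---but there are two genuine problems with the argument as sketched, both traceable to the dangerous/reduced bookkeeping that the paper handles through the conditions (EPS1)--(EPS3) and Lemma~\ref{l:EPS}.

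First, you want $K_{1,k}$ (or its induced substars) to lie in $\red(H_k)$, i.e.\ you need, for some $s$, that $H_k\notin\mc{P}(\iota(K_{1,k}),\mc{H}(s,l-1-s))$. But you also build $H_k$ with a designated part $V_0$ inducing $K_{1,k}$ and the remaining parts being $s$ stable sets and $l-1-s$ cliques; that partition is exactly a witness that $H_k\in\mc{P}(\iota(K_{1,k}),\mc{H}(s,l-1-s))$ for the $s$ you chose. Having such a partition by construction works \emph{against} making the star reduced---to make a graph reduced, one must ensure the corresponding partition of $H_k$ does \emph{not} exist for the relevant $s$. This is precisely what the paper's (EPS3) does: it forbids \emph{any} partition of $H$ with one part a clique or a complement of a star and the remaining $l-1$ parts cliques. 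That nonexistence is what puts all antistars into $\red(H)$, and it is why the paper picks antistars rather than stars---it is the absence of a partition, not the presence of one, that certifies membership in $\red$. Your plan needs to say which partitions must \emph{not} exist and build those constraints into the random model.

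Second, the $1$-critical direction is not a matter of ``enumerating types of large $J\in\red(H_k)$.'' The paper routes this entirely through Corollary~\ref{c:PSisPS}, which reduces $1$-criticality to exhibiting three specific dangerous graph families (joins of independent sets with $K_2$ and $S_2$, and stars minus an edge, together with their complements), and this in turn is what motivates the four classes in (EPS2). Without this criterion---or an equivalent Ramsey-type lemma in the style of Lemma~\ref{l:nonstar} showing that every large non-$1$-star contains one of finitely many ``seed'' obstructions---there is no finite list to union-bound over. The paper's construction also does not use one $K_{1,k}$-part plus constant-size stable sets and cliques with random cross-edges; it relies on the far more elaborate family-of-transversal-partitions machinery of Lemma~\ref{l:partitions} (shared with the ARS construction) in order to guarantee both the existence of the required dangerous partitions and the nonexistence of the forbidden ones simultaneously. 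Your last paragraph correctly identifies this as the hard step, but the specific construction you propose has the parity of ``reduced vs.\ dangerous'' reversed, and the union-bound argument has no finite target without Corollary~\ref{c:PSisPS}.
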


In the proof of \cref{t:ARS} we construct a family of counterexamples to \cref{c:RS} that are $2$-critical. We suspect that the conjecture does not hold even for $1$-critical graphs. In spite of this in~\cite{NorYud191} we obtained a structural description of typical graphs in $\mc{F}$ for any $s$-critical family $\mc{F}$, which we now present.

We define an \emph{$(l,s)$-constellation} (or simply a \emph{constellation}) to be a quadruple $\mc{J}=(J,\phi,\alpha, \beta)$, where $J$ is a (possibly empty) graph, and $\phi,\alpha, \beta$ are functions such that $\phi: V(J) \to [l]$ satisfies $|\phi^{-1}(i)| \leq s$ for every $i \in [l]$,  $\alpha : V(J) \to \{0,1\}$ and $\beta :[l] \to \{0,1\}$. We say that a constellation $\mc{J}$ is \emph{irreducible} if for every $v \in V(J)$ if $\beta(\phi(v))=\alpha(v)$  then there exists $u \in V(J)-\{v\}$ such that $\phi(u)=\phi(v)$ and either $uv \in E(G)$ and $\alpha(u)=0$, or $uv \not \in E(G)$ and $\alpha(v)=1$.

Given an $(l,s)$-constellation $\mc{J}=(J,\phi,\alpha, \beta)$, define a \emph{$\mc{J}$-template} in a graph $G$ to be a tuple $(\psi,X_1,X_2, \ldots,X_l)$ such that  
\begin{itemize}
	\item $(X_1,X_2, \ldots,X_l)$ is a partition of $V(G)$,
	\item $\psi:V(J) \to V(G)$ is an embedding satisfying $\psi(v) \in X_{\phi(v)}$ for every $v \in V(J)$,
\end{itemize}
and, denoting the image of $\psi$ by $Z$, we have
\begin{itemize}	
	\item if $\alpha(v)=1$ then $\psi(v)$ is adjacent to every vertex in $X_{\phi(v)} - Z$  in $G$, and, otherwise,
	$\psi(v)$ is adjacent to no vertex in $X_{\phi(v)} - Z$, and,
	\item if $\beta(i)=1$ then $X_i - Z$ is a clique in $G$, and, otherwise,  $X_i -Z$ is an independent set.
\end{itemize}
Thus, in particular, $Z \cap X_i$ is a core of $G[X_i]$ for every $i \in l$ and thus  $X_i$ induces an $s$-star in $G$ for every $i \in [l]$ and if $|X_i-Z|\geq 2$ and $\mc{J}$ is irreducible then  $Z \cap X_i$ is a minimal core of $G[X_i]$.

Let $\mc{P}(\mc{J})$ denote the family of induced subgraphs of all graphs which admit a 
$\mc{J}$-template. 

\begin{thm}[\protect{\cite[Theorem 2.17]{NorYud191}}]\label{t:critical2}
	Let $\mc{F}$ be an $s$-critical hereditary family with $\chi_c(\mc{F})=l$. Then for almost every graph in $G \in \mc{F}$ there exists an irreducible $(l,s)$-constellation $\mc{J}$ such that $G \in \mc{P}(\mc{J}) \subseteq  \mc{F}$
\end{thm}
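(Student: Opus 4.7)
The plan is to deduce \cref{t:critical2} from \cref{t:critical} applied with an appropriately chosen target family $\mc{T}$ that captures the $s$-star structure of large $\mc{F}$-reduced graphs. Since $\mc{F}$ is $s$-critical, fix $n_0$ such that every $K \in \red(\mc{F})$ with $|V(K)| \geq n_0$ is an $s$-star. Apex-freeness of $\mc{F}$ is essentially free: the witness $\mc{H}(s',l-s') \subseteq \mc{F}$ for $l = \chi_c(\mc{F}) \geq 2$ forces $\crit(\mc{F})$ to contain arbitrarily long induced matchings and their complements, in particular some substar and antisubstar. I would then select a finite collection $\mc{K} \subseteq \crit(\mc{F})$ of minimal dangerous graphs of bounded size, calibrated so that every member of $\mc{T} := \Forb(\mc{K})$ of size beyond some threshold is forced to be an $s$-star. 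Smoothness of $\mc{P}(\mc{T},l) \cap \mc{F}$ I would verify via \cref{l:cleanext}, exploiting that the relevant thin meager subfamily of $\mc{T}$ is extendable.

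Applying \cref{t:critical} then yields, for almost every $G \in \mc{F}$, a partition $(X_1, \ldots, X_l)$ with each $G[X_i] \in \mc{T}$. Invoking \cref{l:uniqueness} on a meager refinement of the pattern, this partition is $\eps$-balanced, so each $|X_i|$ is large, and by the choice of $\mc{K}$ each $G[X_i]$ is an $s$-star with a core $Z_i$ of size at most $s$. Setting $Z := \bigcup_i Z_i$, I would define the constellation $\mc{J} = (J,\phi,\alpha,\beta)$ by taking $J := G[Z]$, $\phi(v) := i$ for $v \in Z_i$, $\alpha(v) := 1$ iff $v$ is complete to $X_{\phi(v)} - Z$, and $\beta(i) := 1$ iff $X_i - Z$ is a clique. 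The inclusion $V(J) \hookrightarrow V(G)$ together with $(X_1,\ldots,X_l)$ is then directly a $\mc{J}$-template. Irreducibility is achieved by iteratively moving any absorbable vertex $v \in V(J)$ (one with $\alpha(v) = \beta(\phi(v))$ and no witness in its part) out of $Z_{\phi(v)}$ and into $X_{\phi(v)} - Z$; this reduction terminates and preserves the template structure.

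The main obstacle I expect is verifying the reverse inclusion $\mc{P}(\mc{J}) \subseteq \mc{F}$, which is what makes the constellation genuinely informative. The essential content is that the pattern of core connections encoded by $\mc{J}$ together with arbitrary clique or stable non-core parts never creates a forbidden induced subgraph. The cleanest route is an iterative application of the definition of a reduced graph: each $G[X_i]$ is an $s$-star and thus reduced, and one builds graphs in $\mc{P}(\mc{J})$ one reduced part at a time, at each step upgrading a clique/stable slot in some $\mc{H}(s^*, l-1-s^*) \subseteq \mc{F}$ to a reduced $s$-star whose type (clique-like or stable-like) matches that slot. The technical subtlety is bookkeeping $(s^*, l-s^*)$ so that the counts coming from $\beta$ are compatible with a witnessing pair already present in $\mc{F}$; this needs the observation that the $\beta$ produced by the procedure above corresponds to such a witnessing pair, which can be extracted from the $(\mc{T}, l)$-partition of $G$ together with the $s$-star classification.
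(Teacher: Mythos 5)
This statement is cited from the prequel paper~\cite{NorYud191} (as Theorem 2.17 there), so the present paper contains no proof for me to compare against; I can only assess whether your outline is likely to close. Two steps have genuine gaps.

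Your justification for apex-freeness is incorrect. An induced matching on many vertices is not a substar --- a substar is a subgraph of a single star $K_{1,n}$, so it has at most one vertex incident to more than one edge --- and its complement is not an antisubstar either. Showing that large induced matchings are $\mc{F}$-dangerous (which does follow from $s$-criticality) therefore does not show that $\dang(\mc{F})$ contains a substar and an antisubstar. Apex-freeness is a genuine hypothesis of \cref{t:critical} and is not obviously implied by $s$-criticality and $\chi_c(\mc{F})\geq 2$; if it is implied, a separate argument is needed, and if it isn't, your route through \cref{t:critical} needs the statement of that theorem relaxed. Along the same lines, invoking \cref{l:cleanext} requires $\mc{T}=\Forb(\mc{K})$ to be \emph{thin}, which holds only if $\mc{K}$ is chosen to include a bipartite, a split, and a co-bipartite non-$s$-star; and the very existence of a finite $\mc{K}$ with the property that every sufficiently large graph in $\Forb(\mc{K})$ is an $s$-star is a Ramsey-type fact that needs its own proof for general $s$ (the paper only proves the $s=0,1$ analogues in \cref{l:2nonedges}, \cref{l:nonclique} and \cref{l:nonstar}).

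The more serious gap is the inclusion $\mc{P}(\mc{J})\subseteq\mc{F}$. Your ``iterative upgrading'' sketch does not work as stated: the definition of an $\mc{F}$-reduced graph $J$ only guarantees $\mc{P}(\iota(J),\mc{H}(s,l-1-s))\subseteq\mc{F}$, where exactly one part is an induced subgraph of $J$ and the remaining $l-1$ parts are plain cliques or stable sets. After the first ``upgrade'' the ambient structure is no longer of the form $\mc{H}(s^*,l-1-s^*)$, so the definition does not license a second upgrade. Moreover, the parts $G[X_i]$ of a typical $\mc{J}$-template need not be $\mc{F}$-reduced --- being an $s$-star is a consequence of reducedness for large graphs, not equivalent to it --- so even the starting point of your iteration is not justified. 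What is actually required is a counting argument: one must show that for any irreducible constellation $\mc{J}$ with $\mc{P}(\mc{J})\not\subseteq\mc{F}$, the quantity $|\mc{P}^n(\mc{J})\cap\mc{F}^n|$ is $o(|\mc{F}^n|)$, so that after discarding the finitely many ``bad'' constellations almost every $G\in\mc{F}$ is still accounted for by a ``good'' one. Nothing in your proposal addresses this supersaturation step, and without it the forward inclusion $G\in\mc{P}(\mc{J})$ you extract from the $(\mc{T},l)$-partition does not yield the required reverse inclusion.
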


Note that if $\mc{J}$ is an $(l,0)$-constellation then a $\mc{J}$-template in a graph $G$ is a partition of $V(G)$ into $l$ homogenous sets, the fixed number of which are cliques. Thus \cref{t:critical2} does indeed generalize one of the directions of \cref{t:BBcritical}. 

A $\mc{J}$-template is similar to the structure proposed by \cref{c:RS}, but in addition to prescribing the structure on the parts of the partition given by the template, we prescribe the behavior of a finite number of additional edges. As \cref{t:ARS} shows this additional restriction is sometimes necessary.  It is tempting to attempt to formulate a common generalization of  \cref{c:RS}  and  \cref{t:critical2}, but we were unable to find a plausible one.

Finally, we need a bound on the number of graphs admitting an $\mc{J}$ template.

\begin{lem}[\protect{\cite[Lemma 2.16]{NorYud191}}]\label{t:speed}
	Let  $\mc{J}=(J,\phi,\alpha,\beta)$ be an irreducible $(l,s)$-constellation then 
	$$ |\mc{P}^n(\mc{J})|= \Theta(n^{|V(J)|}|\mc{H}^n(l,0)|).$$
\end{lem}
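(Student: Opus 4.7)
The plan is to establish matching upper and lower bounds of order $n^{|V(J)|}|\mc{H}^n(l,0)|$. First I would reduce to counting graphs on $[n]$ which admit a $\mc{J}$-template: if $G \in \mc{P}^n(\mc{J})$, say $G$ is an induced subgraph of $H$ with $\mc{J}$-template $(\psi, X_1,\ldots,X_l)$, then $G$ itself admits a $\mc{J}'$-template where $\mc{J}'$ is obtained from $\mc{J}$ by deleting those core vertices whose $\psi$-image lies outside $V(G)$. There are only finitely many such sub-constellations $\mc{J}'$, so if we show the count of graphs admitting a full $\mc{J}'$-template is $\Theta(n^{|V(J')|}|\mc{H}^n(l,0)|)$ for each, summing gives $O(n^{|V(J)|}|\mc{H}^n(l,0)|)$ overall, with equality from $\mc{J}' = \mc{J}$.

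For the upper bound on graphs admitting a $\mc{J}$-template I would specify the template by first picking an injection $\psi : V(J) \to [n]$ (at most $n^{|V(J)|}$ options) and then a partition $(X_1,\ldots,X_l)$ of $[n]$ with $\psi(v) \in X_{\phi(v)}$ (at most $l^n = 2^{O(n)}$ options). Once the template is fixed, the edges inside $\psi(V(J))$, inside each $X_i \setminus \psi(V(J))$, and between $\psi(v)$ and $X_{\phi(v)}\setminus\psi(V(J))$ are forced by $J$, $\beta$, and $\alpha$ respectively, so only cross-part edges remain free, of which there are at most $(1-1/l)\binom{n}{2}$. Since $|\mc{H}^n(l,0)| = 2^{(1-1/l)\binom{n}{2} + O(n)}$, the product $n^{|V(J)|} \cdot l^n \cdot 2^{(1-1/l)\binom{n}{2}}$ is $O(n^{|V(J)|}|\mc{H}^n(l,0)|)$, absorbing the $l^n$ factor into the $2^{O(n)}$ slack.

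For the lower bound I would fix a balanced partition $(Y_1,\ldots,Y_l)$ of $[n]$ and, for each of the $\Theta(n^{|V(J)|})$ injections $\psi$ with $\psi(v) \in Y_{\phi(v)}$, construct graphs by installing $J$ at $\psi(V(J))$, turning each $Y_i \setminus \psi(V(J))$ into a clique or stable set as $\beta(i)$ prescribes, setting the edges from $\psi(v)$ to $Y_{\phi(v)}\setminus\psi(V(J))$ as $\alpha(v)$ prescribes, and choosing all remaining cross-part edges freely. Each such graph lies in $\mc{P}^n(\mc{J})$, and there are $\Omega(|\mc{H}^n(l,0)|)$ free-edge configurations for each fixed $\psi$, yielding a raw count of $\Omega(n^{|V(J)|}|\mc{H}^n(l,0)|)$.

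The main obstacle is passing from the raw count to a bound on distinct graphs, and this is where irreducibility becomes essential. I would show that for almost every free-edge configuration the $\mc{J}$-template recovered from the resulting graph is unique up to symmetries of $\mc{J}$, costing only a constant factor. The partition $(Y_1,\ldots,Y_l)$ is recoverable, up to permutation and $o(n)$ misplacements, by applying Lemma~\ref{l:uniqueness} and Lemma~\ref{l:uniqueness2} to the meager family $\mc{H}(1,0)\cup\mc{H}(0,1)$ of cliques and stable sets. To then pin down the core $\psi(V(J))$ inside $\bigcup_i Y_i$, I would use irreducibility to argue that each $\psi(v)$ is adjacency-distinguishable from the generic vertices of $Y_{\phi(v)}\setminus\psi(V(J))$: if $\alpha(v) \ne \beta(\phi(v))$ then $\psi(v)$'s adjacency to its own part already distinguishes it, while if $\alpha(v) = \beta(\phi(v))$ the companion core vertex $u$ supplied by the irreducibility hypothesis provides the distinguishing edge pattern. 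A union bound over vertices and candidate ``fake'' core positions shows that distinguishability fails for only an $o(1)$ fraction of free-edge configurations, so at most a polynomial overcounting factor is lost, and the resulting lower bound matches the upper bound.
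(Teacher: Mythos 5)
The paper cites this lemma from the prequel \cite{NorYud191} and does not reprove it here, so I can only evaluate your argument on its own terms. The overall architecture --- reduce to graphs admitting a template directly, count templates for an upper bound, use irreducibility plus the uniqueness lemmas to control overcounting in the lower bound --- is the right shape. But your cardinality accounting around $|\mc{H}^n(l,0)|$ has a real gap that breaks both directions of the $\Theta$.

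For the upper bound, you assert that the number of cross-part pairs is at most $(1-1/l)\binom{n}{2}$. That is false: $\sum_{i<j}|X_i||X_j| = \tfrac12\bigl(n^2 - \sum_i |X_i|^2\bigr)$ is maximised at $\tfrac{(l-1)n^2}{2l}$, which exceeds $(1-1/l)\binom{n}{2}$ by $\tfrac{(l-1)n}{2l} = \Theta(n)$. Using the correct maximum, your crude bound becomes $n^{|V(J)|}\cdot l^n \cdot 2^{(1-1/l)\binom{n}{2}+\Theta(n)}$, and since $|\mc{H}^n(l,0)| = \Theta\bigl(l^n n^{-(l-1)/2}\, 2^{(1-1/l)\binom{n}{2}+\Theta(n)}\bigr)$ (the extra $\Theta(n)$ in the exponent and the $n^{-(l-1)/2}$ both coming from the concentration of the sum over balanced partitions), the crude bound overcounts by a factor of $\Theta(n^{(l-1)/2})$, which destroys the claimed $\Theta$. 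The ``absorb $l^n$ into the $2^{O(n)}$ slack'' step is an illegal move: both the target $|\mc{H}^n(l,0)|$ and the crude count have $2^{\Theta(n)}$ corrections to the leading $2^{(1-1/l)\binom{n}{2}}$, and their signs and magnitudes matter. The correct device is to bound the number of template-admitting graphs by $n^{|V(J)|}\sum_{\mc{X}} 2^{\sum_{i<j}|X_i||X_j|}$ and invoke that $\sum_{\mc{X}} 2^{\sum_{i<j}|X_i||X_j|} = \Theta(|\mc{H}^n(l,0)|)$, which is essentially the Kolaitis--Pr\"omel--Rothschild unique-colourability statement; your crude $l^n \times 2^{\max}$ estimate loses exactly the $n^{(l-1)/2}$ that this identity saves.

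The same issue bites the lower bound. You claim that fixing \emph{one} balanced partition gives $\Omega(|\mc{H}^n(l,0)|)$ free-edge configurations, but for a single partition that number is $2^{\sum_{i<j}|Y_i||Y_j|-O(1)} = 2^{\max \mathrm{cross} - O(1)}$, and $2^{\max \mathrm{cross}} = \Theta\bigl(n^{(l-1)/2} l^{-n}\bigr)|\mc{H}^n(l,0)| = o(|\mc{H}^n(l,0)|)$. So a single partition produces only $o(n^{|V(J)|}|\mc{H}^n(l,0)|)$ graphs. You must sum over all $\Theta(l^n n^{-(l-1)/2})$ balanced partitions and then use your irreducibility/uniqueness argument (which is sound in outline, via \cref{l:uniqueness} and \cref{l:uniqueness2}) to show that almost all of the resulting $(\mc{X},\psi,\text{configuration})$ triples yield pairwise distinct graphs up to the bounded symmetry group of $\mc{J}$, so that the overcounting is $O(1)$. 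Once the enumeration is repaired this way on both sides, the rest of your plan --- the reduction through sub-constellations for the induced-subgraph closure, and the adjacency-distinguishability argument for pinning the core --- can be carried through.
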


\section{Exotic examples}\label{s:applications}

\subsection{A family of counterexamples to the Reed-Scott's conjecture}\label{s:ARS}

In this subsection we explicitly define a family  of graphs, which satisfy Theorem~\ref{t:ARS}. 

First, an extra notation. Given two families of graphs $\mc{F}_1$ and $\mc{F}_2$, let $\mc{F}_1 \vee \mc{F}_2$ denote the family of all graphs  which are disjoint unions of a graph in $\mc{F}_1$ and a graph in 
$\mc{F}_2$. Similarly, let $\mc{F}_1 \wedge \mc{F}_2$ denote the family of joins of graphs in $\mc{F}_1$ with graphs in $\mc{F}_2$.\footnote{For example, $\mc{S} \vee \mc{S} = \mc{S}$ and $\mc{S} \wedge \mc{S}$ is the family of complete bipartite graphs.} Clearly, if $\mc{F}_1$ and $\mc{F}_2$ are hereditary then so are $\mc{F}_1 \vee \mc{F}_2$  and $\mc{F}_1 \wedge \mc{F}_2$. 

Given a positive integer $l$, we say that a graph $H$ is an \emph{$l$-ARS-graph} or simply   \emph{an ARS-graph} if $H$ satisfies the following conditions:
\begin{description}
	\item[(ARS1)] For every $1 \leq s \leq l$,  $V(H)$ can be partitioned into $s$ stable sets and $l-s$ cliques;
	\item[(ARS2)] For each graph class $$\mc{G} \in \{\iota(K_1) \vee \mc{C}, \iota(S_3) \wedge \mc{C}, \iota(C_4) \wedge \mc{C},  \iota(\bar{P}_3) \wedge \mc{C}\},$$  
	$V(H)$ can be partitioned into $l-1$ cliques and a set inducing a graph in $\mc{G}$;
	\item[(ARS3)] There exists a partition $\mc{X}_0=\{X_1,X_2,\ldots,X_{l}\}$ of $V(H)$, such that $X_1,X_2,\ldots,X_{l-2}$ are cliques, each of $X_{l-1}$ and $X_{l}$  induce a  subgraph of $H$ with exactly one non-edge, and the vertices of these two non-edges form an independent set in $H$;
	\item[(ARS4)] For every partition $\mc{X} \neq \mc{X}_0$ of $V(H)$ with $|\mc{X}|=l$ there exists $X \in \mc{X }$  such that $H[X]$ contains at least two non-edges. 
\end{description}

In Section~\ref{s:ARSexist} we prove the following.

\begin{thm}\label{t:ARSexist} For infinitely many integers $l$ there exists a $l$-ARS-graph.
\end{thm}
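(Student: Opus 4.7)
The proof is a probabilistic construction. We build $H$ on $n=n(l)$ vertices by explicitly fixing the backbone partition $\mc{X}_0$ from (ARS3), sampling the remaining required partitions at random, and defining $E(H)$ so that every prescribed partition is valid. Conditions (ARS1), (ARS2), (ARS3) are then satisfied by construction, and the randomness is what ultimately forces (ARS4).

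Concretely, take $V(H)=[n]$ with $n$ divisible by $l$, choose a balanced partition $X_1\sqcup\cdots\sqcup X_l=[n]$, make $X_1,\ldots,X_{l-2}$ into cliques, pick pairs $\{a_{l-1},b_{l-1}\}\subseteq X_{l-1}$ and $\{a_l,b_l\}\subseteq X_l$, and declare every pair inside $X_{l-1}\cup X_l$ an edge of $H$ except the six pairs on $\{a_{l-1},b_{l-1},a_l,b_l\}$, which are non-edges. This realizes $\mc{X}_0$ and verifies (ARS3). For each $s\in\{1,\ldots,l\}$ sample a random balanced $l$-partition $\mc{Y}_s$ with $s$ stable parts and $l-s$ clique parts, and for each class $\mc{G}\in\{\iota(K_1)\vee\mc{C},\iota(S_3)\wedge\mc{C},\iota(C_4)\wedge\mc{C},\iota(\bar{P}_3)\wedge\mc{C}\}$ sample a random $\mc{Z}_{\mc{G}}$ of the prescribed shape. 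The edge set is then defined to make all these partitions valid: edges are forced inside clique parts, non-edges inside stable parts, and the combinatorial pattern inside the distinguished part of each $\mc{Z}_{\mc{G}}$ is realized; unforced pairs are set arbitrarily. Consistency of these forced assignments (no pair is both an edge and a non-edge) is ensured by coupling the samples through a shared fine ``atom'' partition $\mc{A}$ of $V(H)$ into $l^{O(1)}$ small atoms: each prescribed partition is chosen as a coarsening of $\mc{A}$ with labels drawn compatibly with the commitments made so far, which deterministically precludes conflict.

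The main difficulty is verifying (ARS4): with positive probability no $l$-partition $\mc{X}\neq\mc{X}_0$ of $V(H)$ has every part inducing a graph with at most one non-edge. The argument exploits the random non-edges of $H$ between distinct $\mc{X}_0$-blocks introduced by the stable parts of $\mc{Y}_s$. Across a typical sample these non-edges are spread generically, so any candidate ``near-clique'' part of $\mc{X}$ that straddles two or more $\mc{X}_0$-blocks on $\Omega(n/l)$ vertices inherits many non-edges and fails the near-clique test. A Chernoff-type bound on the hypergeometric distribution of these non-edges, combined with a union bound over the at most $l^n$ candidate partitions $\mc{X}$, forces every part of $\mc{X}$ to coincide with some $\mc{X}_0$-block up to $o(n/l)$ vertices and so yields a bijection between the parts of $\mc{X}$ and those of $\mc{X}_0$. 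The six explicit non-edges on $\{a_{l-1},b_{l-1},a_l,b_l\}$ then pin down the remaining freedom: any nontrivial vertex swap between two parts of the bijection concentrates at least two of the prescribed non-edges inside a single part, contradicting the near-clique assumption and forcing $\mc{X}=\mc{X}_0$. Balancing the exponential count of candidate $\mc{X}$ against the concentration estimates by choosing $n$ a sufficiently large polynomial in $l$ is the most delicate step, and is where the randomness truly earns its keep; it yields the existence of $l$-ARS-graphs for all sufficiently large $l$.
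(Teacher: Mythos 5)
Your high-level strategy — fix the required partitions, some deterministically and some at random, then define $E(H)$ to make them all valid and rely on randomness to force (ARS4) — is exactly the spirit of the paper's proof. But there are two concrete gaps that keep this from being a proof, and both are places where the paper does something structurally different that you have not reproduced.

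First, you instantiate (ARS1) by sampling a separate balanced $l$-part partition $\mc{Y}_s$ for \emph{each} $1 \le s \le l$, giving $\Theta(l)$ random partitions in all. Each $\mc{Y}_s$ imposes edge/non-edge constraints on $\Theta(n^2/l)$ pairs, and the constraints of $\mc{Y}_1$ (mostly cliques) and $\mc{Y}_l$ (all stable sets) are in direct tension. You assert that a shared ``atom partition'' $\mc{A}$ and compatible label-drawing ``deterministically precludes conflict,'' but this is a claim, not an argument: you never exhibit the atoms, never show the coarsenings can realize the required clique/stable-set type vectors simultaneously, and never verify that enough randomness survives the compatibility constraints to drive the later concentration step. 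The paper sidesteps the whole issue by using only $O(1)$ partitions: a single partition $\mc{Q}$ into $k+1$ stable sets and a single partition $\mc{R}\cup\{Q_0\}$ into $l-k$ cliques and one stable set, with refinement (splitting stable sets into smaller stable sets, cliques into smaller cliques) producing the partitions demanded by (ARS1) for every $s$. Only the five partitions $\mc{P}_0,\dots,\mc{P}_4$ needed for (ARS2)–(ARS3) are random, and consistency is enforced not by a coupling device but by explicit \emph{transversality}: every $\mc{P}_i$ meets every part of $\mc{Q}$ in at most one vertex, so no pair can be simultaneously required to be an edge (inside a $\mc{P}_i$-part) and a non-edge (inside a $\mc{Q}$-part). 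This is the key structural observation your proposal is missing.

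Second, your (ARS4) argument is a Chernoff-plus-union-bound over all $l^n$ candidate partitions, treating the inter-block non-edges as near-independent. But in your construction those non-edges are entirely determined by the $\mc{Y}_s$'s, which you have constrained to be coarsenings of a fixed atom partition with $l^{O(1)}$ atoms; the sample space therefore has only $\mathrm{poly}(l)$ bits of entropy, while the union bound needs $n\log l$ bits, and you have not shown the tail of the relevant (non-hypergeometric, highly correlated) distribution decays fast enough to close this gap. Moreover ``unforced pairs are set arbitrarily'' is dangerous here: if they default to edges you create many spurious near-clique partitions, and if they default to non-edges you must re-examine (ARS1)–(ARS3). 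The paper avoids the bare union bound altogether: it sets all pairs not inside a $\mc{P}$-part to non-edges, so a part of a candidate partition has at most one non-edge essentially iff it is $\mc{P}$-tame, and it then proves \emph{deterministically} (property (P4) via (P5)–(P7)) that any $\mc{P}$-tame $l$-partition of $X$ minus at most $9$ vertices must coincide with one of $\mc{P}_0,\dots,\mc{P}_4$. The only probabilistic step is establishing (P3), (P5), (P6), and the union bound there runs over $2^{O(l)}$ subsets, not $l^n$ partitions. You would need some analogue of this ``tame implies structured'' reduction; without it, the concentration argument does not obviously go through.
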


Meanwhile, we will  show that every $l$-ARS-graph satisfies Theorem~\ref{t:ARS}, thus proving Theorem~\ref{t:ARS} modulo Theorem~\ref{t:ARSexist}.

We start with a few of easy lemmas. 

\begin{lem}\label{l:2nonedges} For every positive integer $h$ there exists $N>0$ satisfying the following. Let  $G$ be a graph with $|V(G)| \geq N$ and at least two non-edges then $G$ contains an induced subgraph $J$ with $|V(J)|=h$ such that $J$ is either edgeless, or an antistar, or a join of one of the graphs in  $\{S_3, C_4, \bar{P}_3 \}$ with a complete graph. 
\end{lem}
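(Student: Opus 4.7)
The plan is to use the two non-edges to force a small induced subgraph $H_0$ of fixed shape in $G$, and then combine Ramsey's theorem with a pigeonhole on adjacency patterns to inflate $H_0$ into an induced $J$ of the required form.

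First I locate $H_0$. Let $e_1, e_2$ be two distinct non-edges of $G$ and set $V' = e_1 \cup e_2$, so $3 \leq |V'| \leq 4$. A short case analysis shows that $G$ contains an induced $H_0 \in \{3K_1, \bar{P}_3, C_4\}$: if $|V'|=3$, the shared endpoint is non-adjacent to two vertices which either form an edge (giving $\bar{P}_3$) or do not (giving $3K_1$); if $|V'|=4$, then either all four remaining pairs of $V'$ are edges, yielding $G[V'] = C_4$, or $G[V']$ has at least three non-edges and an inspection of each resulting $4$-vertex graph ($P_4$, $2K_2$, $P_3+K_1$, $K_2+2K_1$, $4K_1$) exhibits $3K_1$ or $\bar{P}_3$ on some triple.

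Next, choose $N$ large enough that $U := V(G) \setminus V(H_0)$ has size at least $2^4 \cdot R(h, h-1)$, where $R$ is the Ramsey function. Bucketing $U$ by adjacency pattern with $V(H_0)$ produces at most $16$ classes, so some class $U_\tau$ has size at least $R(h, h-1)$. Ramsey's theorem applied inside $G[U_\tau]$ yields either an independent set of size $h$ (in which case $J$ is edgeless and we are done) or a clique $K$ of size $h-1$ all of whose vertices share the same neighborhood $T := \tau \cap V(H_0)$ in $V(H_0)$. To assemble $J$, I split on whether $T = V(H_0)$: if not, pick any $v \in V(H_0) \setminus T$ and observe that $\{v\} \cup K$ induces the antistar $K_{h-1} + K_1$; if yes, then $V(H_0) \cup K$ induces the join of $H_0$ with a clique. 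In the latter case, for $H_0 \in \{\bar{P}_3, C_4\}$ keeping $h - |V(H_0)|$ vertices of $K$ gives the join of $\bar{P}_3$ with $K_{h-3}$ or of $C_4$ with $K_{h-4}$, and for $H_0 = 3K_1$ I promote any vertex of $K$ to serve as the center of an $S_3$ with $V(H_0)$ as leaves, so that the remaining $h-4$ vertices of $K$ form a clique joined to this $S_3$.

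The main technical point is the case analysis for $H_0$ together with checking that the assembled graph really lies in the permitted list and has exactly $h$ vertices. A few very small values of $h$ require a separate sanity check, since the join constructions collapse, but then $H_0$ itself (possibly extended by one clique vertex) already realizes a $J$ of the right form.
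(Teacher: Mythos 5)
Your proof is correct and follows the same basic strategy as the paper's: use Ramsey's theorem together with a trichotomy that separately produces an edgeless $J$, an antistar $J$, or a join of a small graph with a clique. The bookkeeping differs, however. The paper first disposes of the case where $G$ has a stable set of size $h$ (taking $N = 5\,R(h,h)$), then of the case where some vertex has $\geq R(h,h)$ non-neighbours (yielding the antistar directly), and only then observes that the remaining case forces any $\leq 4$ vertices to have a large common neighbourhood, inside which a clique is found and joined to the $3$- or $4$-vertex graph $H_0$ spanned by the two non-edges. You instead fix $H_0$ first, partition $V(G)\setminus V(H_0)$ into at most $2^4$ classes by adjacency pattern to $V(H_0)$, and apply Ramsey inside a large class; the antistar and join cases then fall out of whether the uniform neighbourhood $T$ is proper or all of $V(H_0)$. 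Your route is arguably more systematic and transparent, at the cost of a somewhat larger $N$; both are entirely standard and equally valid. One small point of notation: in this paper $S_3$ denotes the edgeless graph $\bar{K_3}$ (consistent with $\mathcal{S}$ being the class of edgeless graphs), not the star $K_{1,3}$ as you appear to assume when you ``promote'' a clique vertex to a centre. Your construction still produces the right graph, because $K_{1,3}\wedge K_{h-4}$ and $\bar{K_3}\wedge K_{h-3}$ coincide, but the detour is unnecessary: with $T=V(H_0)$ you can simply take $V(H_0)$ together with $h-3$ vertices of $K$ to obtain $\bar{K_3}\wedge K_{h-3}$ directly. Finally, as you note, the statement degenerates for very small $h$ (and is in fact not literally true for $h=3$, e.g.\ for $G=C_4\wedge K_m$); both proofs implicitly assume $h$ is at least $|V(H_0)|$, which is harmless since the lemma is applied with $h=|V(H)|$ for a large graph $H$.
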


\begin{proof}
	Let $n$ be a positive integer such that every graph on $n$ vertices contains a homogenous set on $h$ vertices. We show that $N=5n$ satisfies the lemma.
	
	Let $G$ be as in the lemma statement. We suppose that $G$ contains no stable set on $h$ vertices, as otherwise the lemma holds. Thus every set of $n$ vertices of $G$ contains a clique on $h$ vertices. Suppose first that there exists $v \in V(G)$ with at least $n$ non-neighbors, then $v$ together with a clique of size $h-1$ chosen among its non neighbors induces a desired antistar. Thus we assume that every vertex of $G$ has at most $n$ non-neighbors. It follows that every set of at most four vertices of $G$ has at least $n$ common neighbors, and so there exists a clique of size $h$ among those neighbors.
	
	Thus it suffices to show that if $G$ contains at least two non-edges, then it contains an induced subgraph isomorphic to one of $S_3, C_4$ or $\bar{P}_3$, but this is clear. 
\end{proof}

\begin{cor}\label{c:ARSreduced} 
	 Let $H$ be an $l$-ARS graph then   $\wpn(H)=l$, and there exists an integer $n_0$ such that 
	every graph $G \in \red(H)$ with $|V(G)| \geq n_0$ contains at most one non-edge.  In particular, $H$ is $2$-critical.
\end{cor}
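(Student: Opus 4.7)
My plan is to prove the three assertions ($\wpn(H) = l$, the non-edge bound on reduced graphs, and $2$-criticality) in sequence, combining the ARS axioms with \cref{l:2nonedges}. For $\wpn(H) \geq l$, I show $H \not\in \mc{H}(0, l)$: any partition of $V(H)$ into $l$ cliques would differ from $\mc{X}_0$ (whose parts $X_{l-1}$ and $X_l$ contain non-edges by ARS3), so ARS4 would force one of those cliques to contain at least two non-edges, a contradiction. For $\wpn(H) \leq l$ I verify $H \in \mc{H}(s, l + 1 - s)$ for every $0 \leq s \leq l + 1$. When $s \geq 1$ this follows by padding an ARS1 partition (with $s' = \min(s, l)$) by an empty part. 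When $s = 0$, ARS2 with $\mc{G} = \iota(K_1) \vee \mc{C}$ yields a partition of $V(H)$ into $(l - 1)$ cliques and one part $P \in \mc{G}$; $P$ cannot be a pure clique $K_m$ (which would contradict the previous step), so $P = K_1 \cup K_m$, and splitting $P$ into its two clique components gives $l + 1$ cliques in total.

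For the non-edge bound I argue contrapositively: assuming $|V(G)| \geq n_0$ for a suitably large $n_0$ and that $G$ has at least two non-edges, I show $G$ is $H$-dangerous, i.e., for every $s \in [0, l - 1]$ the graph $H$ admits a partition into an induced subgraph of $G$, $s$ stable sets, and $(l - 1 - s)$ cliques. Pick a constant $h$ depending on $H$ that exceeds $|V(H)|$ and all parameter sizes appearing in the ARS2 partitions of $H$, and set $n_0 := N(h)$ from \cref{l:2nonedges}. By that lemma, $G$ contains an induced subgraph $J'$ on $h$ vertices of one of three forms: (a) $\bar K_h$, (b) the antistar $K_1 \cup K_{h - 1}$, or (c) $G_1 \wedge K_{h - |V(G_1)|}$ with $G_1 \in \{S_3, C_4, \bar{P}_3\}$. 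For $s = 0$ I invoke ARS1 with $s' = 1$ in case (a) (the stable-set part has size $\leq |V(H)| \leq h$, hence embeds into $\bar K_h$), and ARS2 with $\mc{G} = \iota(K_1) \vee \mc{C}$ in case (b) or $\mc{G} = \iota(G_1) \wedge \mc{C}$ in case (c), whose special part embeds into $J'$ by the choice of $h$. For $s \geq 1$ I invoke ARS1 with $s' = s + 1$ in case (a), designating one of its stable sets as the ``subgraph of $G$'' part, and ARS1 with $s' = s$ in cases (b) and (c), designating one clique of $H$ (of size $\leq |V(H)|$) as the ``subgraph of $G$'' part, using that $J'$ contains $K_{h - 1}$ in case (b) and $K_{h - |V(G_1)| + 2}$ in case (c).

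Finally, a graph with at most one non-edge is either $K_n$ (a $0$-star) or $K_n - e$, and the latter has the two endpoints of its missing edge as a core of size $2$, so it is a $2$-star. Hence every sufficiently large graph in $\red(H)$ is a $2$-star, proving $H$ is $2$-critical. The main obstacle in the argument is the bookkeeping needed to pick $h$ large enough that, simultaneously for every $s \in [0,l-1]$ and every possible shape of $J'$, the appropriate part of the ARS1/ARS2 partition of $H$ embeds into $J'$; the specific list of classes in ARS2 is tailored so that cases (b) and (c) of \cref{l:2nonedges} each have a matching ARS2 class.
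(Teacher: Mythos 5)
Your proposal is correct and follows essentially the same approach as the paper: apply \cref{l:2nonedges} to a putative large reduced graph with two non-edges, obtain an induced subgraph $J'$ of one of the three types, and then use (ARS1) and (ARS2) to exhibit, for every $s$, a partition of $V(H)$ into $s$ stable sets, $l-1-s$ cliques, and a part embedding into $J'$, forcing $H$-dangerousness. Your write-up is somewhat more careful than the paper's at two points: you verify $\wpn(H)=l$ by explicitly checking membership/non-membership in $\mc{H}(s,t)$ rather than just asserting it follows from the axioms, and you take $h$ strictly larger than $|V(H)|$ (plus a constant) rather than $h=|V(H)|$, which cleanly sidesteps boundary issues about fitting the largest clique part of an (ARS1)-partition into $J'$. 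One tiny slip: in case (c) the largest clique in $J'=G_1\wedge K_{h-|V(G_1)|}$ has size $h-|V(G_1)|+\omega(G_1)$, which for $G_1=S_3$ is $h-2=K_{h-|V(G_1)|+1}$, not $K_{h-|V(G_1)|+2}$; this is harmless given your generous choice of $h$.
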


\begin{proof}
		It follows from (ARS1) and (ARS2) that $\wpn(H) < l+1$,	and it  follows from (ARS3) and (ARS4) that $V(H)$ can not be partitioned into $l$ cliques, implying that $\wpn(H) \geq l$. Thus $\chi_c(H)=l$.
		
		Let $h=|V(H)|$ and let $n_0$ be such that the conclusion of \cref{l:2nonedges} holds with $N=n_0$.
		Suppose for a contradiction that there exists $G \in \red(H)$ with $|V(G)| \geq n_0$ such that $G$ has at least two non-edges. Then by \cref{l:2nonedges} there exists $J \in \red(H)$ such that $|V(J)| \geq |V(H)|$ and $J$ is either edgeless, or an antistar, or a join of one of the graphs in  $\{S_3, C_4, \bar{P}_3 \}$ with a complete graph. It follows from (ARS1) and (ARS2) that for every $0 \leq s \leq l-1$ we can partition $V(H)$ into $s$ stable sets, $l-1-s$ cliques and a subgraph of $J$, a contradiction. 
\end{proof}	

Let $\mc{I}=\iota(S_2) \wedge \mc{C}$ denote the family of graphs with at most one non-edge, and  let $\mc{A}(l)$ denote the family of all graphs $G$ such that there exists an $(\mc{I},l)$-partition $\mc{X}$ of $G$ such that $X \cup X'$ does not contain an independent set of size four for all  $X,X' \in \mc{X}$.  The following theorem describes the structure of typical $H$-free graphs for an $l$-ARS graph $H$.

\begin{thm}\label{t:ARSstructure} Let $H$ be an $l$-ARS graph. Then \begin{itemize}
	\item[(i)]	$\mc{A}(l) \subseteq \Forb(H)$,  
	\item[(ii)] almost all $H$-free graphs are in $\mc{A}(l)$,
	\item[(iii)] $$|\Forb^n(H)| = \Theta(n^{2l}|\mc{H}^n(l,0)|).$$ 
	\end{itemize}
\end{thm}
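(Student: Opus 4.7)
The plan is to establish (i), (ii), (iii) in sequence, drawing on the tools of Section~\ref{s:prelim}.

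For (i) I would argue by contradiction: assume some $G \in \mc{A}(l)$ with partition $\mc{X}=(X_1,\ldots,X_l)$ contains an induced copy of $H$, and restrict $\mc{X}$ to $V(H)$ to obtain a decomposition of $V(H)$ into (possibly empty) parts each inducing a graph in $\mc{I}$. By transferring single vertices from a largest part into any empty part, we may assume all $l$ parts $Y_1,\ldots,Y_l$ are nonempty; this preserves both $\mc{I}$-membership and the no-independent-four-set condition. By (ARS4) the unordered partition $\{Y_1,\ldots,Y_l\}$ coincides with $\mc{X}_0$, so (ARS3) yields four vertices of $V(H)$ forming an independent set inside the union of two parts of $\mc{X}$, contradicting $G \in \mc{A}(l)$.

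For (ii) I would apply \cref{t:critical2} to $\mc{F}=\Forb(H)$, which is $2$-critical with $\wpn(H)=l$ by \cref{c:ARSreduced}: almost every $G \in \Forb(H)$ belongs to $\mc{P}(\mc{J})$ for some irreducible $(l,2)$-constellation $\mc{J}=(J,\phi,\alpha,\beta)$ with $\mc{P}(\mc{J}) \subseteq \Forb(H)$. Only finitely many such $\mc{J}$ arise, so it suffices to show that for every large $n$ and each such $\mc{J}$, every graph in $\mc{P}^n(\mc{J})$ lies in $\mc{A}(l)^n$. Fix such a $\mc{J}$ and consider a large $G \in \mc{P}(\mc{J})$ with template $(\psi,X_1,\ldots,X_l)$ and core $Z=\psi(V(J))$. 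Because the cross-edges between distinct $X_k$ remain unconstrained inside $\mc{P}(\mc{J})$, I would rule out the case $\beta(i)=0$ for any $i$: then the large stable set $X_i - Z$ plus suitably chosen cross-edges would realise the (ARS2)-partition with $\mc{G} = \iota(K_1) \vee \mc{C}$, embedding $H$ into a graph of $\mc{P}(\mc{J})$ and contradicting $\mc{P}(\mc{J}) \subseteq \Forb(H)$. Analogous arguments with $\mc{G} \in \{\iota(S_3)\wedge\mc{C},\,\iota(C_4)\wedge\mc{C},\,\iota(\bar{P}_3)\wedge\mc{C}\}$ force $\alpha(v)=1$ for every core vertex $v$, so each $X_i$ induces a clique possibly minus one edge between two core vertices, and therefore lies in $\mc{I}$. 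Finally, the no-independent-four-set condition follows from (ARS3): an independent four-set inside $X_i \cup X_j$ must consist of the two cores of $X_i$ and $X_j$, and then mapping the two non-edges of $\mc{X}_0$ onto these four vertices and the remaining $l-2$ cliques of $\mc{X}_0$ into $X_k - Z$ for $k \neq i,j$ would produce an induced copy of $H$ inside a graph of $\mc{P}(\mc{J})$, again impossible.

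For (iii) the upper bound is immediate from \cref{t:speed}: every irreducible $(l,2)$-constellation $\mc{J}$ satisfies $|V(J)| \leq 2l$, so $|\mc{P}^n(\mc{J})|=O(n^{2l}|\mc{H}^n(l,0)|)$, and summing over the finitely many relevant $\mc{J}$ yields $|\Forb^n(H)|=O(n^{2l}|\mc{H}^n(l,0)|)$. For the matching lower bound I would combine (i) with a direct construction inside $\mc{A}(l)^n$: fix a balanced partition of $[n]$ into $l$ parts and an arbitrary bipartite pattern across parts (contributing $\Theta(|\mc{H}^n(l,0)|)$ choices, via complementation with $\mc{H}^n(0,l)$), then independently in each part either leave it a clique or remove one of its $\Theta(n^2)$ internal edges (an extra factor of $\Theta(n^{2l})$), noting that only a bounded constant fraction of the resulting graphs violates the no-independent-four-set condition.

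The main obstacle is the second half of (ii): converting the structural condition (ARS3) into an explicit induced embedding of $H$ inside a graph of $\mc{P}(\mc{J})$, which requires careful tracking of which cross-edges in a $\mc{J}$-template are free to be assigned and of the simultaneous placement of the remaining $\mc{X}_0$-cliques into the other parts $X_k - Z$.
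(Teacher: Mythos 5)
Your overall strategy matches the paper's: derive (i) from (ARS3)--(ARS4), apply \cref{t:critical2} together with \cref{c:ARSreduced} for (ii), and get the upper bound in (iii) from \cref{t:speed} via the $2$-criticality bound $|V(J)| \leq 2l$. For (iii)'s lower bound you use a direct counting construction inside $\mc{A}(l)^n$, whereas the paper instead exhibits a concrete irreducible $(l,2)$-constellation $\mc{J}$ with $|V(J)|=2l$ (a clique minus a perfect matching, two vertices per part) and invokes \cref{t:speed} again; your route can work, but it requires something like \cref{l:uniqueness2} to ensure the triples (partition, cross-edges, edge-removals) give essentially distinct graphs, which you do not mention.

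There is a genuine gap in your argument for (ii), in the deduction that $\beta$ and $\alpha$ must be identically one. You propose to rule out $\beta(i)=0$ using (ARS2) with $\mc{G} = \iota(K_1) \vee \mc{C}$, but this does not work: an (ARS2)-partition of $V(H)$ with this $\mc{G}$ consists of $l-1$ cliques plus one part inducing a clique with at most one isolated vertex --- that is $l$ clique-like pieces --- while a $\mc{J}$-template with some $\beta(i)=0$ supplies only $l-1$ clique parts, so the placement fails. The correct ingredient is (ARS1): if $\beta(i)=0$ for exactly $s \geq 1$ indices, partition $V(H)$ into $s$ stable sets and $l-s$ cliques by (ARS1) and embed each stable set into an $X_i - Z$ with $\beta(i)=0$ and each clique into an $X_j - Z$ with $\beta(j)=1$, choosing the free cross-edges to match $H$. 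Similarly, your claim that the three classes $\iota(S_3)\wedge\mc{C}$, $\iota(C_4)\wedge\mc{C}$, $\iota(\bar{P}_3)\wedge\mc{C}$ are what force $\alpha \equiv 1$ is misattributed: with $\beta\equiv 1$ established, a core vertex $v$ with $\alpha(v)=0$ yields, after deleting the other core vertices (permissible since $\mc{P}(\mc{J})$ is hereditary), an induced subgraph consisting of $l$ cliques plus one extra vertex anticomplete to one of them --- exactly the structure that $\iota(K_1)\vee\mc{C}$ from (ARS2) is designed to hit, and the three classes you cite (all of which are joins with a clique, hence require the extra vertices to be \emph{complete} to a clique) cannot detect an anticomplete vertex. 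Your handling of the independent-four-set condition via (ARS3) is sound, and part (i) is correct (the vertex-moving step is unnecessary but harmless), but the $\alpha,\beta\equiv 1$ step as written would not go through.
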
	

\begin{proof}
The condition (i) follows from (ARS3) and (ARS4). 

Let $\mc{J}=(J,\phi,\alpha,\beta)$ be an $(l,2)$-constellation  such that $\mc{P}(\mc{J}) \subseteq \Forb(H)$. We claim that $\mc{P}(\mc{J}) \subseteq A(l)$. By \cref{c:ARSreduced} and \cref{t:critical2} this claim implies (ii). By  \cref{t:speed} it additionally implies $|\Forb^n(H)| = O(n^{2l}|\mc{H}^n(l,0)|).$ 
 
Note that (ARS1)-(ARS3) imply that $\beta$ and $\alpha$ are identically one, and there does not exist an independent set of $\{u_1,v_1,u_2,v_2\}$ in $J$ such that $\phi(u_i)=\phi(v_i)$ for $i=1,2$. This observation immediately implies  $\mc{P}(\mc{J}) \subseteq A(l)$, as claimed.

Let $J$ be a graph with $V(J)=\{u_i,v_i\}_{i=1}^l$ obtained from a complete graph by deleting edges $u_iv_i$ for every $i \in [l]$. Let $\phi(u_i)=\phi(v_i)=i$ for every $i \in l$, and let  $\beta$ and $\alpha$ be identically one, and let $\mc{J}=(J,\phi,\alpha,\beta)$. Then $\mc{P}(\mc{J})$ consists of all graphs $G$ such that there exists an $(\mc{I},l)$-partition $\mc{X}$ of $G$ so that if $u_1,v_1 \in X_1$,  $u_2,v_2 \in X_2$ are pairwise distinct vertices for some $X_1,  X_2 \in \mc{X}$ and $u_iv_i \not \in E(G)$ for $i=1,2$ then $u_1u_2,u_1v_2,v_1u_2,v_1v_2 \in E(G)$. It follows that $\mc{P}(\mc{J}) \subseteq A(l)$. As $\mc{J}$ is  irreducible, we have $|\mc{P}^n(\mc{J})| = \Theta(n^{2l}|\mc{H}^n(l,0)|)$ by  \cref{t:speed}. Together with the upper bound established above this implies (iii)
\end{proof}

\begin{proof}[Proof of \cref{t:ARS}]
	We show that if $H$ is an $l$-ARS graph for some $l \geq 2$,  then almost all $H$-free graphs admit no clean $H$-profile,
	
	Let $\mf{F}$ be a clean $H$-free pattern. Then $\mc{F} \subseteq \red(H)$ for every $\mc{F} \in \mf{F}$. By \cref{c:ARSreduced} and \cref{l:uniqueness}, there exists $\eps>0$ such that almost every graph in $G \in \mc{P}(\mf{F})$ admits an $(\mc{I},l)$-partition and such a partition is unique and $\eps$-balanced. By (ARS3) at most one element of $\mf{F}$ contains $\mc{I}$, and thus if $V(G)$ is sufficiently large the above partition must be a partition of $V(G)$ into a set inducing at most one non-edge and $l-1$ cliques, corresponding to a  $\mc{J}$ template for an $(l,2)$-constellation $\mc{J}=(J,\phi,\alpha,\beta)$ with $|V(J)|=2$. Thus by  \cref{t:speed} and \cref{t:ARSstructure} (iii) we have $$|\mc{P}^n(\mf{F})|=O(n^{2}|H^n(l,0)|) = o(|\Forb^n(H)|),$$
	as desired.
\end{proof}	

\subsection{Graphs with no asymptotic linear Erd{\H{o}}s-Hajnal property.}\label{s:noEH}

In this section we describe a family of graphs satisfying Theorem~\ref{t:P3vague}.

Let $l$ be an integer. 
We say that a graph $H$ is a \emph{$(P_3,l)$-jumble} or simply a \emph{$P_3$-jumble}  if \begin{itemize}
	\item[(S1)] for every $0 \leq s \leq l-1$,  $V(H)$ can be partitioned into $s$ stable sets, $l-1-s$ cliques, and a set $Z$ such that $H[Z]$ is isomorphic to $P_3$, and
	\item[(S2)] for every partition $X_1,X_2,\ldots,X_{l}$ of $V(H)$ there exists $i$ such that $H[X_i]$ is not $P_3$-free.
\end{itemize}	

In \cref{s:P3construction} we will show the following.

\begin{thm}\label{t:existence}There exist  $(P_3, l)$-jumbles for infinitely many postive integers $l$.
\end{thm}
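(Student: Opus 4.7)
The plan is to mimic the probabilistic construction used in the proof of \cref{t:ARSexist}. First, I would design a deterministic ``skeleton'' graph $H_0$ on a vertex set $V = Z_0 \cup B_1 \cup \cdots \cup B_{l-1}$, where $Z_0$ is a fixed triple inducing a $P_3$ (always playing the role of the set $Z$ in (S1)) and the $B_j$'s are blocks of size $n$ each, with $n$ a parameter to be chosen large in $l$. The edges within $\bigcup_j B_j$ would be arranged so that, for every $s \in \{0,\ldots,l-1\}$, this union admits a partition into $s$ stable sets and $l-1-s$ cliques; this is the same multipartition flexibility that underpins (ARS1) for ARS-graphs, and it can be realized by a direct combinatorial design. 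With such $H_0$ in place, condition (S1) holds automatically with $Z^s = Z_0$ for every $s$.

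Next, I would include every remaining pair of vertices as an edge independently with probability $1/2$, obtaining a random augmentation $H$ of $H_0$. Condition (S2) is then verified via the probabilistic method. Any $l$-partition $\mc{X} = (X_1,\ldots,X_l)$ of $V$ all of whose parts are $P_3$-free forces each $X_i$ to be a disjoint union of cliques, which in turn prescribes the edge/non-edge status of a quadratic-in-$n$ number of random pairs within the parts. By a Chernoff-type bound, the probability that a fixed $\mc{X}$ is ``bad'' in this sense is at most $2^{-\Omega(n^2)}$, while the total number of $l$-partitions of $V$ is $l^{|V|} = 2^{O(ln \log l)}$. Choosing $n$ large enough compared to $l \log l$, the expected number of bad partitions tends to $0$, so some realization of the random edges yields an $H$ satisfying both (S1) and (S2).

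The hardest part will be the design of the skeleton: arranging $H_0$ so that all $l$ partitions required by (S1) coexist with mutually consistent adjacencies, while still leaving $\Theta(n^2)$ edge positions free for the randomization step. A secondary issue is controlling the $O(n)$ edges between $Z_0$ and the blocks so that no bad partition can absorb vertices of $Z_0$ into otherwise $P_3$-free parts; since these edges contribute only a lower-order term to the union bound, this can be handled by prescribing each vertex of $Z_0$ to have uniform adjacency to each block (either full or empty), preserving enough rigidity for the main argument to succeed.
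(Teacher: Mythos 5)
Your high-level plan---build a deterministic skeleton with the desired $(s,l-1-s)$-partitions and randomize the rest, then rule out unwanted partitions by a union bound---matches the paper's strategy in spirit, and the counting inequality you quote (bad probability $2^{-\Omega(n^2)}$ versus $l^{|V|}$ partitions) is the right rough shape. However, the proposal glosses over all three places where the actual work lies, and at least one of your shortcuts is genuinely broken.

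First, the skeleton itself. You say the ``multipartition flexibility \ldots can be realized by a direct combinatorial design,'' but this is exactly the nontrivial ingredient. The paper does it with an explicit gadget: an \emph{$l$-square} (a Latin-square-like triple of partitions of an $l^2$-set into rows, columns, and diagonals), and a tuple of $l$ such squares $\mathcal B_1,\dots,\mathcal B_l$ in which $\mathcal B_i$ has its rows and $i$ diagonals stable and its columns and remaining diagonals complete. Writing $s=ql+r$ and mixing rows/columns/diagonals across squares realizes every $s\in\{0,\dots,l^2\}$, and---crucially---the ``fixed'' pairs (those inside a line) are a vanishing fraction of all pairs. Your description in terms of $l-1$ monolithic blocks $B_j$ of size $n$ gives no comparable mechanism: a block cannot simultaneously be a stable set in one designed partition and a clique in another, so the designed parts cannot simply be the $B_j$'s, and you do not specify what they are.

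Second, the union bound is not a straightforward sum over all $l$-partitions weighted by $2^{-\Omega(n^2)}$. For partitions whose parts are (subsets of) the designed lines, every internal pair is fixed and the part is $P_3$-free with probability $1$, so your per-partition bound simply fails for them. The paper resolves this by first proving a \emph{regularity} statement: a.a.s.\ every $P_3$-free set is either a subset of a line, or a line plus one extra vertex and of size $\le 3l/5$, or of size $\le 2l/5$. This is proved by a second-moment/Chernoff argument (the ``seagulls and colonies'' computation). Regularity, not the naive union bound, is what kills all candidate partitions at once: any $l^2+1$ such sets cover strictly fewer than $|V(\mathcal Z)|$ vertices, since $V(P)$ must be split across parts. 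Your write-up has no analogue of this step.

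Third, and most concretely broken, is your plan to \emph{deterministically} fix the $Z_0$-to-block edges with ``uniform adjacency.'' If $z\in Z_0$ is complete to a clique part $C$ of the $s=0$ design, then $C\cup\{z\}$ is a clique, hence $P_3$-free; if $z$ is anticomplete to $C$, then $C\cup\{z\}$ is a clique plus an isolated vertex, again $P_3$-free. Either way you have handed the adversary a partition with one empty part and all parts $P_3$-free, violating (S2) deterministically. To make absorption of $Z_0$ impossible you need each vertex of $Z_0$ to have a \emph{mixed} neighborhood in every large structured set, which is exactly why the paper leaves all pairs between $V(P)$ and the squares free (random) and relies on condition $(\star)$ in the regularity argument. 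So the claim that these edges ``contribute only a lower-order term'' misses that the wrong deterministic choice kills the construction outright, not just the union bound.

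In short: the skeleton-plus-randomization plan is right, but the $l$-square gadget, the regularity lemma, and random $V(P)$-edges are not optional refinements---they are the proof.
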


In this section we show that every $P_3$-jumble has no asymptotic linear Erd\H{o}s-Hajnal property, thus proving \cref{t:P3vague} modulo Theorem~\ref{t:existence}. The main step of the argument is the description of a typical $H$-free graph for a $P_3$-jumble $H$, which follows directly from Theorem~\ref{t:critical}.

\begin{thm}\label{t:P3structure}
	Let $H$ be an $(P_3,l)$-jumble, and let $\mc{T}=\Forb(P_3)$. Then $\mc{P}(\mc{T},l) \subseteq \Forb(H)$. Conversely, there exists $\eps>0$ such that almost every graph $G \in \Forb(H)$ admits a unique $\eps$-balanced $(\mc{T},l)$-partition.
\end{thm}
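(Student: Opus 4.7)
The statement splits into two parts. The inclusion $\mc{P}(\mc{T},l) \subseteq \Forb(H)$ is immediate: restricting any $(\mc{T},l)$-partition of a graph $G$ to an induced copy of $H$ produces a partition of $V(H)$ into $l$ $P_3$-free parts (since $\Forb(P_3)$ is hereditary), which contradicts (S2). So no $G \in \mc{P}(\mc{T},l)$ contains $H$ as an induced subgraph.

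For the converse I apply \cref{t:critical} with $\mc{F}=\Forb(H)$ and $\mc{K}=\{P_3\}$, and check four ingredients. First, $\wpn(\Forb(H))=l$: (S2) rules out any partition of $V(H)$ into at most $l$ homogenous parts, giving $\wpn(H) \geq l$, and (S1) combined with the fact that $P_3$ splits into a stable set plus a clique exhibits $V(H)$ as a union of $l+1$ homogenous sets, giving $\wpn(H) \leq l$. Second, $P_3 \in \crit(\Forb(H))$: (S1) directly gives $H \in \mc{P}(\iota(P_3),\mc{H}(s,l-1-s))$ for every $s \in \{0,\ldots,l-1\}$. Third, $\Forb(H)$ is apex-free because $P_3$ itself plays both roles, namely it is a substar (being the star $K_{1,2}$) and an antisubstar (since its complement $K_2+K_1$ is a non-induced subgraph of $K_{1,2}$, and thus a substar). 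Fourth, $\mc{P}(\mc{T},l)\cap\Forb(H)=\mc{P}(\mc{T},l)$ by the first part, and this family is smooth by \cref{l:cleanext} applied to the pattern $(\mc{T},\ldots,\mc{T})$: the pattern is proper (both $\mc{C}$ and $\mc{S}$ are contained in $\mc{T}=\Forb(P_3)$), and $\mc{T}$ is extendable (adding an isolated vertex preserves $P_3$-freeness) and thin (since $P_3$ is itself a split graph, $\mc{H}(1,1)\not\subseteq\mc{T}$, forcing $\chi_c(\mc{T})=1$). \cref{t:critical} then produces a $(\mc{T},l)$-partition for almost every $G \in \Forb(H)$.

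For uniqueness and the $\eps$-balanced property I invoke \cref{l:uniqueness} on the same pattern: $\mc{T}$ is meager (thin as above, and apex-free by the same substar/antisubstar observation applied to $P_3 \in \crit(\Forb(P_3))$) and extendable, so there is an $\eps>0$ for which almost every $G \in \mc{P}(\mc{T},l)$ admits a unique $\eps$-balanced $(\mc{T},l)$-partition; this conclusion transfers to almost every $G \in \Forb(H)$ via $\mc{P}(\mc{T},l) \subseteq \Forb(H)$ together with the previous step, since the overwhelming majority of $H$-free graphs are already in $\mc{P}(\mc{T},l)$. The main thing to be careful about is the apex-freeness verification; without the observation that the non-induced reading of ``subgraph of a star'' lets $P_3$ serve simultaneously as substar and antisubstar, it would be unclear how to extract an antisubstar in $\crit(\Forb(H))$ from (S1) and (S2) alone.
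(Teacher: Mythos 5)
Your proof is correct and follows the same route as the paper: apply \cref{t:critical} and \cref{l:uniqueness} with $\mc{K}=\{P_3\}$, verifying smoothness of $\mc{P}(\mc{T},l)$ via \cref{l:cleanext}. You additionally spell out the apex-freeness check, with the observation that $P_3$ is simultaneously a substar and an antisubstar (since $\bar{P}_3$ is a spanning subgraph of $K_{1,2}$); the paper's proof leaves this implicit after merely noting that $P_3$ is dangerous for $\Forb(H)$.
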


\begin{proof} Let $\mc{F} = \Forb(H)$.
	We have $\mc{P}(\mc{T},l) \subseteq \mc{F}$ by (S2). Thus, $\wpn(\mc{F}) \geq l$. On the other hand (S1) implies that $\wpn(\mc{F}) \leq l$, and $P_3$ is dangerous for $\mc{F}$. As $\mc{T}$ is clearly extendable, \cref{l:cleanext} implies that $\mc{P}(\mc{T},l)$ is smooth. Applying Theorem~\ref{t:critical} and Lemma \ref{l:uniqueness} with $\mc{K}=\{P_3\}$ now yields the conclusion.
\end{proof}

\begin{cor}\label{c:P3concrete}
	Let $H$ be a $P_3$-jumble. Then almost all $H$-free graphs $G$ on $n$ vertices satisfy $h(G)=O\left( \frac{n}{ \log n}\right)$.  
\end{cor}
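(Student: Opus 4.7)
The plan is to combine the structural description given by \cref{t:P3structure} with the $P_3$-free baseline \cref{o:P3homogenous}, which says that a uniformly random $P_3$-free graph on $m$ vertices has largest homogenous set of size $\Theta(m/\log m)$.

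By \cref{t:P3structure}, there exists $\eps > 0$ such that almost every $G \in \Forb(H)$ admits a unique $\eps$-balanced $(\mc{T}, l)$-partition $\mc{X}(G) = (X_1, \ldots, X_l)$, where $\mc{T} = \Forb(P_3)$; in particular $|X_i| = n/l + O(n^{1-\eps})$ for each $i$. Since $\mc{P}(\mc{T}, l) \subseteq \Forb(H)$, the $H$-free graphs $G$ on $[n]$ for which $(X_1, \ldots, X_l)$ is a valid $(\mc{T}, l)$-partition are in bijection with tuples $(G_1, \ldots, G_l, R)$, where $G_i \in \Forb^{|X_i|}(P_3)$ and $R$ encodes an arbitrary graph on the pairs of vertices lying in distinct parts. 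Combined with the almost-sure uniqueness of $\mc{X}(G)$, this lets me partition (up to a negligible fraction) $\Forb^n(H)$ by the value of $\mc{X}$ and conclude that, conditional on $\mc{X}(G)=\mc{X}$, the induced subgraphs $G[X_i]$ are independent and each uniformly distributed on $\Forb^{|X_i|}(P_3)$.

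Next I would apply \cref{o:P3homogenous} to each part: since $|X_i| \to \infty$, almost surely $h(G[X_i]) = O(|X_i|/\log |X_i|) = O(n/\log n)$, and a union bound over the finitely many parts preserves this. Finally, for any homogenous set $S$ of $G$ (whether clique or independent set), the intersection $S \cap X_i$ is a homogenous set of the same type in $G[X_i]$, and hence
\[
|S| \leq \sum_{i=1}^{l} h(G[X_i]) = O(n/\log n),
\]
yielding the claimed bound on $h(G)$.

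The only nontrivial step is the reduction to the uniform distribution on $P_3$-free graphs on each part; it uses the almost-sure uniqueness of the balanced partition to avoid double-counting $H$-free graphs that might a priori admit several valid $(\mc{T},l)$-partitions. Once this is in place the argument is a direct application of \cref{o:P3homogenous} together with the trivial observation that a homogenous set of $G$ restricts to one of the same type in each part.
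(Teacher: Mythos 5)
Your strategy coincides with the paper's: apply \cref{t:P3structure}, then \cref{o:P3homogenous} to each part, and use that a homogenous set of $G$ restricts to one of the same type in every part, giving $h(G)\le\sum_{i}h(G[X_i])$. The gap is in the step you single out as the crux, and ``avoiding double-counting'' misdiagnoses it. Conditioning on $\mc{X}$ being the \emph{unique} $\eps$-balanced $(\mc{T},l)$-partition of $G$ does not obviously leave the parts $G[X_i]$ (approximately) independent and uniform on $P_3$-free graphs: the almost-sure uniqueness in \cref{t:P3structure} concerns a random $G\in\Forb(H)$ and says nothing about the conditional law once $\mc{X}$ is fixed. Among the graphs $G$ for which $\mc{X}$ is merely \emph{some} valid $(\mc{T},l)$-partition, the $G[X_i]$ are indeed independent and uniform; but conditioning further on $\mc{X}$ being the unique $\eps$-balanced one discards a subset of these graphs, and nothing you have cited rules out that subset being a small, badly biased slice concentrated on tuples with large homogenous sets.

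What closes the gap is \cref{l:uniqueness2}: for a fixed $\eps$-balanced $\mc{X}$ and fixed $P_3$-free graphs $(G_1,\ldots,G_l)$ on its parts, almost every extension has $\mc{X}$ as its unique $(\mc{T},l)$-partition. Averaging over $(G_1,\ldots,G_l)$ shows that passing to uniqueness removes only an $o(1)$ fraction of the graphs admitting $\mc{X}$ as a valid $(\mc{T},l)$-partition, so the conditional law of $(G[X_1],\ldots,G[X_l])$ is $o(1)$-close in total variation to the uniform product distribution and \cref{o:P3homogenous} transfers. This is exactly the role \cref{l:uniqueness2} plays in the paper's counting of good pairs $(G,\mc{X})$ against sequences in $\mc{Z}(C)$. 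With that lemma invoked your argument is correct and agrees with the paper's; without it the ``reduction to the uniform distribution'' is unjustified.
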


\begin{proof} Let $l=\wpn(H)$, $\mc{F} = \Forb(H)$, $\mc{T}=\Forb(P_3)$ and let $\eps > 0$ be as in \cref{t:P3structure}. 
We say that $(G,\mc{X})$ is a \emph{good} pair if $G \in \mc{F}$, and $\mc{X}$ is the unique $\eps$-balanced $(\mc{T},l)$-partition of $G$.
By \cref{t:P3structure} almost every $G \in \mc{F}$ belongs to a (unique) good pair. 

We claim that for every $\eps$-balanced partition $\mc{X}$ of $[n]$, we have $h(G) = O \left( \frac{n}{ \log n}\right)$ for almost every good pair $(G,\mc{X})$. Clearly this claim implies the corollary. 

Let $\mc{X} = (X_1,\ldots,X_l)$, and let $\mc{Z}$ be the set of all possible sequences $(G_1,\ldots,G_l)$ such that $V(G_i)=X_i$ and $G_i \in \mc{T}$. Every such sequence extends to $2^m$ graphs $\mc{F}^n$, where $m=\sum_{1 \leq i <j \leq l}|X_i||X_j|$. Let $\mc{Z}(C)$ be the set of all sequences in $\mc{Z}$ such that $h(G_i) \leq C \frac{n}{ \log n}$ for every  $i \in [l]$. By \cref{o:P3homogenous} there exists $C>0$ independent on $n$ such that $|\mc{Z}(C)| = |\mc{Z}|-o(\mc{Z})$. By \cref{l:uniqueness2} almost every extension of a given sequence in $\mc{Z}(C)$ gives rise to a good pair, implying that at least $(1-o(1))2^m|\mc{Z}|$ good pairs of the form $(G,\mc{X})$ satisfy $h(G) \leq lC \frac{n}{ \log n}$. On the other hand the remaining sequences in $\mc{Z}$ correspond to $o(2^m |\mc{Z}|)$ good pairs, which finishes the proof of the claim.
\end{proof}	
 
\subsection{Exotic PS-critical graphs}\label{s:PScritical} 

In this section we present a family of graphs satisfying \cref{t:BBPS}. The description of these graphs is very similar to the description of ARS-graphs in Section~\ref{s:ARS} and some of the results from that section carry over after minor modifications.   

First, let us show that the definition of $1$-critical graphs  is equivalent to the original definition given in~\cite{PS93}, as promised in the introduction.  We need the following analogue of Lemma~\ref{l:2nonedges}.

\begin{lem}\label{l:nonclique} For every positive integer $h$ there exists $N$ satisfying the following. If $G$  is a graph with $|V(G)| \geq N$ and $G$ is neither complete nor edgeless, then $G$ contains an induced subgraph $J$ such that $|V(J)|=h$ and either $J$ or $\bar{J}$ is a star or has exactly one edge.
\end{lem}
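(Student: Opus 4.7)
The plan is a Ramsey-style argument followed by a short case analysis that parallels the proof of \cref{l:2nonedges}, with the dichotomy "clique versus independent set" replacing the earlier "edge versus non-edge". Observe first that the conclusion of the lemma is symmetric under complementation: the set $\{J, \bar J\}$ has the same shape before and after swapping its two elements. Since $G$ being neither complete nor edgeless is also preserved under complementation, I may pass to $\bar G$ whenever convenient. So I set $n = 3h$ and choose $N$ to be the Ramsey number $R(n,n)$; by Ramsey's theorem every graph on $N$ vertices contains a homogeneous set of size $n$, and by possibly replacing $G$ with $\bar G$ I may assume that $G$ contains a clique $A \subseteq V(G)$ with $|A|\geq 3h$.

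Next I split on how vertices outside $A$ interact with $A$. In the first case, some $v \in V(G)\setminus A$ has at least $h-1$ non-neighbors in $A$; picking any $h-1$ of them, say $a_1,\dots,a_{h-1}$, the induced subgraph $J = G[\{v,a_1,\dots,a_{h-1}\}]$ is the disjoint union of $K_{h-1}$ with the isolated vertex $v$, so $\bar J$ is a star on $h$ vertices. In the remaining case every $v \in V(G)\setminus A$ has at most $h-2$ non-neighbors in $A$. Since $G$ is not complete, pick a non-edge $uv$ of $G$. Any vertex of $A$ has no non-neighbors inside $A$, so each of $u,v$ has at most $h-2$ non-neighbors in $A$ regardless of whether it lies in $A$. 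Discarding $u$ and $v$ from $A$ if applicable and applying the pigeonhole bound $|A| - 2 - 2(h-2) \geq h-2$, I can select $h-2$ common neighbors $a_1,\dots,a_{h-2} \in A\setminus\{u,v\}$ of $u$ and $v$. Then $J = G[\{u,v,a_1,\dots,a_{h-2}\}]$ is exactly $K_h$ minus the edge $uv$, so $\bar J$ is a graph on $h$ vertices with exactly one edge.

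The main obstacle is merely bookkeeping: verifying that the two cases cover all possibilities and that the constant $n = 3h$ (or any comparable choice) is large enough for the final pigeonhole bound, including the subcase where one or both of $u,v$ happen to lie in $A$. The symmetry reduction at the start is what makes the writeup clean, since otherwise one would need to repeat the clique analysis inside $\bar G$ to extract a star or one-edge graph.
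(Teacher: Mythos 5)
Your proof is correct, and it takes a route that is close to but not identical with the paper's. Both arguments begin the same way: invoke Ramsey's theorem to find a large homogeneous set, and use the complementation symmetry of the conclusion to assume without loss of generality that this set is of a fixed type. From there the mechanics diverge. The paper takes the homogeneous set to be a \emph{maximal} independent set $S$ (of size $\geq 2h$); maximality guarantees that any vertex $v$ outside $S$ automatically has a neighbor in $S$ (and such a $v$ exists because $G$ is not edgeless), and the case split is simply on whether this one vertex $v$ has at least $h$ or fewer than $h$ neighbors in $S$ — producing a star or a one-edge graph respectively. You instead take an arbitrary clique $A$ (of size $\geq 3h$) and split on a global condition: either some vertex outside $A$ has at least $h-1$ non-neighbors in $A$ (giving the complement of a star), or all outside vertices have at most $h-2$ non-neighbors in $A$, in which case you pull in a non-edge $uv$ and build $K_h$ minus an edge from common neighbors in $A$. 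Your version is also sound, but it requires the extra bookkeeping you flag — tracking whether $u$ or $v$ lies inside $A$ — which the paper's maximality trick sidesteps, since there $v$ is always outside $S$ and the needed neighbor comes for free. Your Ramsey threshold ($3h$ vs.\ $2h$) is slightly larger but harmless.
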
	

\begin{proof}
	Let $N$ be such that every graph on at least $N$ vertices contains a homogenous set of size at least $2h$. Thus without loss of generality we assume that $G$ contains a maximal independent set $S$ such that $|S| \geq 2h$. As $G$ is not edgeless there exists $v \in V(G) - S$ and $v$ has a neighbor in $S$. If $v$ has at least $h$ neighbors in $S$ then $G[S \cup \{v\}]$ contains a star on $h$ vertices and, otherwise, $G[S \cup \{v\}]$ contains an $h$ vertex induced subgraph with exactly one edge. 
\end{proof} 

\begin{lem}\label{l:nonstar} For every positive integer $h$ there exists $N$ satisfying the following. If $G$  is a graph with $|V(G)| \geq N$ such that neither $G$ nor $\bar{G}$ is edgeless or a star, then $G$ contains an induced subgraph $|V(J)|=h$ and either $J$ or $\bar{J}$ has exactly one edge,  or is obtained from a star by deleting one edge, or is join of a graph on two vertices and an independent set. 
\end{lem}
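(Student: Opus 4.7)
The plan is to run a case analysis based on how vertices outside a maximum independent set of $G$ attach to it. First note that both the hypothesis (``neither $G$ nor $\bar G$ is edgeless or a star'') and the conclusion (``$J$ or $\bar J$ has one of the three prescribed shapes'') are invariant under replacing $G$ by $\bar G$. Choosing $N$ large enough that every $N$-vertex graph contains a homogenous set of size $2h$, I may therefore assume that $G$ contains an independent set of size at least $2h$; extend it to a maximum independent set $I$ with $|I|\ge 2h$, so that every $v\in V(G)\setminus I$ has at least one neighbor in $I$.

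Write $V'=V(G)\setminus I$. Suppose first that every $v\in V'$ is adjacent to every vertex of $I$. Since $G$ is not edgeless, $V'\neq\emptyset$, and since $G$ is not a star, $|V'|\ge 2$. Any two vertices $v_1,v_2\in V'$ together with any $h-2$ vertices $\ell_1,\ldots,\ell_{h-2}\in I$ induce a join of $G[\{v_1,v_2\}]$ with an independent set of size $h-2$, which matches the third target shape.

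Otherwise some $v\in V'$ has $1\le|N_I(v)|\le|I|-1$. Let $k=|N_I(v)|$. If $k=1$, then $v$ together with its unique neighbor in $I$ and any $h-2$ further vertices of $I$ induces a one-edge graph. If $k\ge 2$ and $|I\setminus N_I(v)|\ge h-2$, then $v$ together with one neighbor and $h-2$ non-neighbors from $I$ again induces a one-edge graph. Finally, if $k\ge 2$ and $|I\setminus N_I(v)|\le h-3$, then $k\ge |I|-(h-3)\ge h-2$ since $|I|\ge 2h$, so $v$ together with $h-2$ of its neighbors and one non-neighbor from $I$ induces $K_{1,h-2}$ plus an isolated vertex, i.e.\ $K_{1,h-1}$ with one edge removed.

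No step is expected to be a serious obstacle; the only delicate point is the initial complement-symmetry observation, which is what allows us to reduce to the independent-set case without losing the antistar and one-edge-deleted-from-a-star conclusions.
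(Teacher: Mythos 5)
Your proof is correct, but it takes a genuinely different route from the paper's. The paper first invokes Lemma~\ref{l:nonclique} to extract a large induced star (or one-edge graph, which already finishes the job), then extends it to a \emph{maximal induced star} with center $u$ and leaf set $S$, and analyzes a single vertex $v$ outside it: many $S$-neighbors gives the join shape (via $\{u,v\}$ joined to $N_S(v)$), at least one but few $S$-neighbors gives the one-edge shape, and zero $S$-neighbors forces $uv\notin E(G)$ by maximality, giving the star-minus-an-edge. You instead anchor directly on a \emph{maximal independent set} $I$ obtained from Ramsey, and split on whether all outside vertices attach fully to $I$ (join shape, using two vertices of $V'$ as the two-vertex part) or some outside vertex attaches partially (one-edge or star-minus-edge depending on the neighbor/non-neighbor counts). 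Your approach skips Lemma~\ref{l:nonclique} entirely and is self-contained from Ramsey; the paper's approach profits from reusing its preceding lemma and produces the join/star shapes from the star center $u$ and a single external vertex rather than from two external vertices. Both case analyses are exhaustive and the vertex counts work out to exactly $h$ in every branch.

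One small terminological slip: you say ``extend it to a maximum independent set $I$,'' but a given independent set need not sit inside any maximum independent set. What you need (and what you actually use) is a \emph{maximal} independent set containing the size-$2h$ set; maximality gives both $|I|\ge 2h$ and the property that every vertex of $V'$ has a neighbor in $I$. Replacing ``maximum'' by ``maximal'' fixes this without affecting anything else.
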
	

\begin{proof} By \cref{l:nonclique} there exists $N$ such that every graph $G$ with $|V(G)| \geq N$ such that neither $G$ nor $\bar{G}$ is edgeless contains  an induced subgraph $J'$ such that $|V(J')|\geq 2h$ and either $J'$ or $\bar{J}'$ is a star or has exactly one edge.  We may assume without loss of generality that $J'$ is a maximal induced subgraph of $G$ which is a star. Let $S$ be the set of all the leaves of $J'$, and let $u$ be the center of $J'$. 
	
Consider arbitrary $v \in V(G) - V(J')$. The join of $G[\{u,v\}]$ and the neighborhood of $v$ in $S$ is an induced subgraph of $G$. Therefore, if $v$ has at least $h$ neighbors in $S$ then the lemma holds, and so we assume that $v$  has at least $h$ non-neighbors in $S$. If $v$ has at least one neighbor in $S$ then we can find an $h$ vertex induced subgraph of $G$ with exactly one edge. It remains to consider the case, when $\{v\} \cup S$ is independent. By maximality of $J$, we have $uv \not \in E(G)$, and so 
$V(J) \cup \{v\}$ induces a  graph obtained from a star by deleting one edge, as desired.
\end{proof}

In~\cite{PS93, BB11} a graph $H$ is defined to be \emph{PS-critical} if every sufficiently large join of a graph on two vertices and an independent set is dangerous for $\Forb(H)$, and the same is true for every sufficiently large graph obtained from a star by deleting an edge, as well as for the complements of the above graphs.  The next corollary, which is an immediate consequence of \cref{l:nonstar}, implies that this definition coincides with the definition of $1$-critical given in the introduction.

\begin{cor}\label{c:PSisPS} 
Let $\mc{F}$ be a hereditary family. Suppose that there exists graphs $J_1, J_2, J_3$ such that $J_i,\bar{J}_i \in \dang(\mc{F})$ for every $i \in [3]$, $J_1$ is obtained from a star by deleting an edge, and $J_2$ and $J_3$ are joins of an independent set with $K_2$ and $S_2$, respectively. Then 
$\mc{F}$ is $1$-critical.
\end{cor}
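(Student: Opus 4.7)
The plan is to prove the contrapositive: under the stated hypotheses, every sufficiently large $K \in \red(\mathcal{F})$ is a $1$-star. The key preparatory observation, which I would record first, is that $\dang(\mathcal{F})$ is closed under taking induced supergraphs. Indeed, if $J$ is an induced subgraph of $K$, then $\iota(J) \subseteq \iota(K)$, so $\mathcal{P}(\iota(J), \mathcal{H}(s, l-1-s)) \subseteq \mathcal{P}(\iota(K), \mathcal{H}(s, l-1-s))$ for every $s$; hence any $s$ witnessing $K$ is reduced also witnesses $J$ is reduced, and by contrapositive $J \in \dang(\mathcal{F})$ implies $K \in \dang(\mathcal{F})$. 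So it suffices to show that every large non-$1$-star $K$ contains at least one of $J_1, J_2, J_3, \bar{J_1}, \bar{J_2}, \bar{J_3}$ as an induced subgraph.

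The main tool is \cref{l:nonstar}. A graph $K$ fails to be a $1$-star precisely when neither $K$ nor $\bar K$ is an induced subgraph of a star, equivalently when neither is edgeless or a star, so the lemma's hypothesis holds. Apply it with $h = \max_i |V(J_i)|$ and with $|V(K)|$ at least the resulting threshold: this produces an induced subgraph $J \subseteq K$ of size $h$ such that $J$ or $\bar J$ has one of three forms: (a) exactly one edge; (b) obtained from a star by deleting one edge; or (c) a join of a $2$-vertex graph with an independent set. In case (b), $J \cong K_{1,h-2} \cup K_1$ contains $J_1$ as an induced subgraph (take the center, $|V(J_1)|-2$ of the leaves, and the isolated vertex), so $K \supseteq J_1$; the complementary case yields $K \supseteq \bar{J_1}$. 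In case (c), $J$ is either the join of $K_2$ with an independent set or of $S_2$ with an independent set, each containing the corresponding $J_2$ or $J_3$ as an induced subgraph by restricting the independent-set part to size $|V(J_i)|-2$; the complementary case is analogous. In each of these five subcases $K$ contains a dangerous graph, contradicting $K \in \red(\mathcal{F})$.

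The main obstacle is case (a), where $J$ (or its complement) has exactly one edge: such a $J$ need not contain any $J_i$ or $\bar{J_i}$ as an induced subgraph when these have more than one edge, so the upward-closedness argument does not apply directly. To handle this, I would revisit the proof of \cref{l:nonstar}: the $1$-edge output arises in a specific subcase, when a vertex $v$ outside a chosen maximal induced star $J' \subseteq K$ (with center $u$ and leaves $S$) has at least one neighbor in $S$ and at least $h$ non-neighbors in $S$. If $v$ is non-adjacent to $u$, the set $\{u, v\}$ together with $h-2$ non-neighbors of $v$ in $S$ induces a type (b) graph, reducing to the handled case. If $v$ is adjacent to $u$, I would use a Ramsey-type pigeonhole on the adjacencies between $V(K) \setminus V(J)$ and $V(J)$ (exploiting that $|V(K)|$ is much larger than $h$) to find a large homogeneous subset of external vertices with uniform adjacency behavior to $V(J)$; combined with a suitable pair of vertices from $V(J)$, this extracts an induced subgraph of $K$ of type (b) or (c) of size at least $|V(J_i)|$, producing the required dangerous subgraph. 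Making this extraction precise, verifying that every adjacency-pattern case leads to a type (b) or (c) configuration, is the principal technical step.
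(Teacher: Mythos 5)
Your overall plan — prove the contrapositive via \cref{l:nonstar} together with the upward-closure of $\dang(\mc{F})$ under induced supergraphs — is exactly the paper's intended route, and your handling of cases (b) and (c) is correct. The paper's own proof is a single sentence citing \cref{l:nonstar} and does not engage with the case analysis at all.

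You have, however, put your finger on a real gap, and your proposed repair does not close it. A graph $J$ with exactly one edge on $h\geq 4$ vertices, namely $J\cong K_2\cup S_{h-2}$, is not a $1$-star, yet every induced subgraph of $J$ is of the form $K_2\cup S_j$ or $S_j$, and every induced subgraph of $\bar J\cong S_2\wedge K_{h-2}=K_h-e$ is $K_j$ or $K_j-e$; none of these is a star-minus-an-edge or a join of a $2$-vertex graph with an independent set once it has at least $5$ vertices. So if $|V(J_i)|\geq 5$ for all $i$, then $K_2\cup S_{h-2}$ contains none of $J_1,J_2,J_3,\bar J_1,\bar J_2,\bar J_3$, and the upward-closure argument simply has nothing to attach to. Your first refinement (when $v$ is non-adjacent to the star's center $u$, replace the one-edge witness by the type-(b) witness $K_{1,h-2}\cup K_1$) is a genuine improvement over the proof of \cref{l:nonstar}, but the remaining subcase ($uv\in E$) cannot be pushed through by a Ramsey/pigeonhole search for a type-(b) or type-(c) subgraph, because, as the $K_2\cup S_k$ example shows, such a subgraph need not exist in $K$ at all. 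What is actually needed is a fourth hypothesis, namely that some $J_4$ with exactly one edge (equivalently some $\bar J_4\cong K_m-e$) together with its complement lies in $\dang(\mc{F})$; this matches the fourth term in \cref{l:nonstar}'s conclusion and is precisely what the application in \cref{l:EPS} supplies through the class $\iota(S_2)\wedge\mc{C}$ in (EPS2). As stated, the corollary's hypothesis appears to be missing this condition, and neither the paper's one-liner nor your sketch resolves case (a) without it.
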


\begin{proof}
It follows from  \cref{l:nonstar} that
there exists an integer $n_0$ such that for every graph  $K \in \red(\mc{F})$ with  $|V(K)| \geq n_0$ either $K$ or $\bar{K}$ is edgeless or a star. 	
\end{proof}	

We are almost ready to define the family of graphs satisfying Theorem~\ref{t:BBPS}. In addition to the classes of graphs used  to define ARS-graphs, we need to introduce notation the following  graph class. We denote by $\mc{C}^{+}$ the family of all graphs $G$ such that either $G$ is complete, or $G \setminus v$ is complete for some vertex $v \in V(G)$ such that $\deg(v) \leq 1$.

Let $l$ be a positive integer.
We say that a graph $H$ is an \emph{$l$-EPS-graph} (or simply an \emph{EPS-graph}) if $H$ satisfies the following conditions:
\begin{description}
	\item[(EPS1)] For every $1 \leq s \leq l$,  $V(H)$ can be partitioned into $s$ stable sets and $l-s$ cliques;
	\item[(EPS2)] For each graph class $$\mc{G} \in \{\iota(S_2) \vee \mc{C}, \iota(K_2) \vee \mc{C}, \iota(S_2) \wedge \mc{C}, \mc{C}^{+}\},$$  
	$V(H)$ can be partitioned into $l-1$ cliques and a set inducing a graph in $\mc{G}$;
	\item[(EPS3)] There exists no partition $\mc{X}=(X_1,X_2,\ldots,X_l)$ of $V(H)$ such that $H[X_1]$ is a clique or a complement of a star, and $X_i$ is a clique in $H$ for $2 \leq i \leq l$.
\end{description}

\begin{lem}\label{l:EPS}
	Every EPS-graph is $1$-critical, but not $0$-critical.
\end{lem}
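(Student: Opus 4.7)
The plan is to proceed in three stages: first compute $\wpn(\Forb(H)) = l$, then invoke \cref{c:PSisPS} to obtain $1$-criticality, and finally exhibit arbitrarily large reduced graphs that are not $0$-stars. Throughout, set $\mc{F} = \Forb(H)$ and $N = |V(H)|$.

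For the witnessing partition number, (EPS3) forbids any partition of $V(H)$ into $l$ cliques, giving $\mc{H}(0, l) \subseteq \mc{F}$ and hence $\wpn(\mc{F}) \geq l$. For the upper bound I will verify $H \in \mc{H}(s, l+1-s)$ for every $s \in \{0, \ldots, l+1\}$: the cases $s \in \{1, \ldots, l+1\}$ follow from (EPS1) by adjoining an empty clique or stable set, and the case $s = 0$ follows from (EPS2) with $\mc{G} = \iota(K_2) \vee \mc{C}$ combined with (EPS3), which rules out the part $H[Q]$ being a clique or a complement of a star and forces $H[Q] = K_n \sqcup K_2$, yielding a partition of $V(H)$ into $l+1$ cliques.

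For $1$-criticality I will apply \cref{c:PSisPS} with $J_1 = K_{1,N} \sqcup K_1$, $J_2 = K_2 \wedge S_N$, and $J_3 = K_{2, N}$. Each $J_i$ contains $S_N$ as an induced subgraph, and each $\bar{J}_i$ contains $K_N$ as an induced subgraph; together with $|V(H)| = N$ this ensures that the parts produced by (EPS1) fit inside the $J_i$ or $\bar{J}_i$. To verify $J_i \in \dang(\mc{F})$, for each $s \in \{0, \ldots, l-1\}$ I take an (EPS1) partition of $H$ with $s+1$ stable sets and $l-s-1$ cliques, using one of the stable sets as the $J_i$-subgraph part. For $\bar{J}_i \in \dang(\mc{F})$ with $s \geq 1$ the argument is dual, using an (EPS1) partition and taking one clique as the $\bar{J}_i$-subgraph; the remaining case $s = 0$ uses (EPS2) with $\mc{G} = \mc{C}^+$, $\iota(S_2) \vee \mc{C}$, $\iota(K_2) \vee \mc{C}$ for $i = 1, 2, 3$, respectively. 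In each case (EPS3) excludes the subcases where $H[Q]$ is a clique or a clique plus an isolated vertex (the latter being a complement of a star), leaving $H[Q]$ as a clique plus a pendant, a clique plus two isolated vertices, or a clique plus an isolated edge, each an induced subgraph of the corresponding $\bar{J}_i$. Hence all six dangerousness conditions hold and $\mc{F}$ is $1$-critical by \cref{c:PSisPS}.

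For non-$0$-criticality I will show that for every $n \geq 2$ the antistar $K = K_1 \sqcup K_n$ lies in $\red(\mc{F})$ but is not a $0$-star. Every induced subgraph of $K$ is a clique or a complement of a smaller star, so any partition of $V(H)$ into an induced subgraph of $K$ together with $l-1$ cliques is exactly of the form that (EPS3) forbids. Thus $H \notin \mc{P}(\iota(K), \mc{H}(0, l-1))$, and by hereditarity $\mc{P}(\iota(K), \mc{H}(0, l-1)) \subseteq \mc{F}$, witnessing $K \in \red(\mc{F})$ via $s = 0$. Since $K$ has both an edge and an isolated vertex it is neither complete nor edgeless, so it is not a $0$-star, producing arbitrarily large reduced non-$0$-stars. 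The hard part will be the bookkeeping for the $s = 0$ dangerousness of the $\bar{J}_i$: one must pair each $\bar{J}_i$ with exactly the right (EPS2) class so that (EPS3) eliminates the two degenerate subcases and the surviving third subcase is recognizably an induced subgraph of $\bar{J}_i$. Once this matching is in place, the remainder reduces to routine applications of (EPS1), (EPS2), and the cited corollary.
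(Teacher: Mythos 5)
Your proposal is correct and follows the same three-step route the paper takes: determine $\wpn(\Forb(H))=l$ from (EPS1)--(EPS3), deduce $1$-criticality from \cref{c:PSisPS} by exhibiting explicit $J_1,J_2,J_3$ whose dangerousness (and that of their complements) is supplied by (EPS1) and (EPS2), and use (EPS3) to show antistars of every size lie in $\red(\Forb(H))$, so $H$ is not $0$-critical. The only inessential wrinkle is the appeal to (EPS3) inside the $s=0$ dangerousness check for the $\bar J_i$: each (EPS2)-class you paired with $\bar J_i$ is already contained in $\iota(\bar J_i)$ for graphs on at most $|V(H)|$ vertices, so no subcases need to be excluded there.
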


\begin{proof}
	Let $H$ be an $l$-EPS-graph. It follows from (EPS1) and (EPS2) that $\wpn(H) \leq l$, and it follows from (EPS3) that $\wpn(H) \geq l$. Therefore $\wpn(H) = l$, and (EPS1) and (EPS2) further  guarantee that there exists graphs $J_1,J_2$ and $J_3$ satisfying the conditions of \cref{c:PSisPS} for $\mc{F} = \Forb(H)$. Thus $H$ is $1$-critical by \cref{c:PSisPS}. 
	
	By (EPS3) there exists no partition  $(P_1,P_2,\ldots,P_l)$ of $V(H)$ such that $H[P_1]$ is a complement of a star, and $P_2,\ldots,P_l$ are cliques in $H$. Therefore $H$ is not $0$-critical, as no complement of a star is $\Forb(H)$-dangerous.
\end{proof}

By \cref{l:EPS} the following theorem, which is proved in Section~\ref{s:ARSexist}, implies \cref{t:BBPS}.

\begin{thm}\label{t:EPSexist}
	There exists infinitely many EPS-graphs.
\end{thm}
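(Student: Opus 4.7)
The plan is to construct $l$-EPS-graphs for all sufficiently large integers $l$, following the same semi-random template as the proof of \cref{t:ARSexist}. I would fix a vertex partition $V(H) = V_1 \cup \cdots \cup V_l$ with parts of large size $n$, install deterministic gadgets that witness (EPS1) and (EPS2), and then use random bipartite edges between the $V_i$'s to destroy every $l$-partition forbidden by (EPS3).

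For the deterministic skeleton I would design the induced graph on each $V_i$ so that $V_i$ can be read off as either a clique or a stable set in appropriate partitions; this will allow the $s$-partition of (EPS1) to be realised for every $1 \leq s \leq l$. The four partitions promised by (EPS2) are enforced by inserting a constant-size gadget, say inside $V_l$, configured so that a few extra vertices (together with a local reassignment of vertices between $V_l$ and the other parts) realise each of the four required structures in $\iota(S_2) \vee \mc{C}$, $\iota(K_2) \vee \mc{C}$, $\iota(S_2) \wedge \mc{C}$, and $\mc{C}^{+}$. The gadget contributes only $O(1)$ deterministic edges, so it does not interfere with the random-edge argument below.

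To enforce (EPS3) I would place every edge between distinct $V_i$'s independently with probability $1/2$ and use a union bound over candidate $l$-partitions $(X_1,\ldots,X_l)$ of $V(H)$ with $X_2,\ldots,X_l$ cliques and $X_1$ either a clique or a clique plus an isolated vertex (recall that $\overline{S_k} = K_k \cup K_1$). The number of such partitions is at most $l^{|V(H)|}\cdot|V(H)| = 2^{O(n)}$, whereas the probability that a specified partition is consistent with the random inter-part edges decays as $2^{-\Omega(n^2)}$, since most pairs inside each candidate part come from distinct $V_i$'s and their edge status is prescribed independently. A matching (and much easier) calculation shows that the deterministically engineered (EPS2) partitions survive with high probability.

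The main obstacle will be the case analysis for (EPS3), in particular handling the extra distinguished vertex present when $X_1$ is a complement of a star. I would deal with it by conditioning on the choice of the isolated vertex and on the intersection pattern $(|X_i \cap V_j|)_{i,j}$, checking that the failure probability remains $o(1)$ uniformly in these data. Once (EPS3) is secured, (EPS1) and (EPS2) follow directly from the deterministic design, and letting $l$ range over an infinite set produces infinitely many $l$-EPS-graphs.
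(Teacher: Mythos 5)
Your plan follows the ``fixed partition, random inter-part edges'' template, which is what the paper uses for the $(P_3,l)$-jumbles in Section~\ref{s:P3construction}. The paper's actual proof of Theorem~\ref{t:EPSexist} is materially different: it reuses Lemma~\ref{l:partitions} from the ARS construction, building \emph{random transversal partitions} $\mc{Q},\mc{R},\mc{P}_0,\ldots,\mc{P}_4$ of a single ground set and then specifying the edges of $H$ \emph{deterministically} as a function of those partitions (every pair covered by a part of $\mc{P}\setminus\mc{P}_0$ is an edge, with controlled modifications near exclusive vertices). The randomness lives in the partitions, not the edges, and transversality plus properties (P3)--(P6) are what make (EPS1)--(EPS3) cohere.

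There is a genuine gap in your proposal, and it sits exactly where you wave your hands: the ``deterministic skeleton'' on the $V_i$'s. Your union bound $2^{O(n)}$ versus $2^{-\Omega(n^2)}$ only kills candidate partitions whose parts cut substantially across the $V_i$'s; partitions aligned with $(V_1,\ldots,V_l)$, or small perturbations of them, receive no help from the random edges. So the internal structure of each $H[V_i]$ alone must simultaneously permit the (EPS1) partitions for \emph{every} $1\le s\le l$ and the four (EPS2) partitions, while forbidding any (EPS3) partition. But an induced subgraph $H[V_i]$ is fixed: it cannot ``be read off as either a clique or a stable set in appropriate partitions,'' and any (EPS1)/(EPS2) partition that cuts across the $V_i$'s would be destroyed with probability $1-o(1)$ by the very random edges you invoke. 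Concretely, with the natural choice of $V_1$ a large stable set and $V_2,\ldots,V_l$ cliques, (EPS3) holds for free, but (EPS2) fails: the $l-1$ cliques each meet $V_1$ in at most one vertex and every class in $\{\iota(S_2)\vee\mc{C},\iota(K_2)\vee\mc{C},\iota(S_2)\wedge\mc{C},\mc{C}^+\}$ contains only $O(1)$ further stable vertices, so $V_1$ cannot be covered once $|V_1|>l+O(1)$. (Likewise (EPS1) fails for $s\ge 2$, since $V_2,\ldots,V_l$ cannot be merged or covered by fewer than $l-1$ cliques.) The symmetric design with the $V_i$'s all cliques fails (EPS3) outright. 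To make the fixed-parts-plus-random-edges route work one would need each $V_i$ to carry something like the $l$-square structure of Section~\ref{s:P3construction}, and one would then have to argue, nontrivially and not by a crude union bound, that the forbidden (EPS3) shape --- $l-1$ cliques plus one clique-or-complement-of-star part --- cannot be assembled from lines of the squares. Your proposal does not supply that structure or that argument, which is precisely what Lemma~\ref{l:partitions} and its properties (P4)--(P7) provide in the paper.
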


\section{Constructions}\label{s:construction}

\subsection{Proof of Theorems~\ref{t:ARSexist}and~\ref{t:EPSexist}}\label{s:ARSexist}

In this section we construct infinite families of ARS-graphs and of EPS-graphs. 

Let us start by informally sketching our construction.
We start by specifying the partitions of $V(H)$ which will satisfy properties (ARS1)-(ARS3) and (EPS1)-(EPS2), respectively. These partitions will be chosen using a randomized procedure subject to certain transversality conditions. The graph $H$ will then be constructed by specifying its structure within each part of the partitions. The transversality and randomness will be used to ensure that these specifications don't conflict with each other, and that every large enough ``structured'' subgraph
of $H$ is close to being a part of one of the partitions. The last condition will guarantee that the respective conditions (ARS4) and (EPS3) also holds.

We use the following definitions in the description of the properties of the constructed partitions. 
We say that families of finite sets $\mc{P}$ and $\mc{P}'$ are \emph{transversal} if $|P \cap P'| \leq 1$ for all $P \in \mc{P}'$ and $P' \in \mc{P}'$.  We say that a set $S$ is \emph{covered by  $\mc{P}$}  if  there exists $P \in \mc{P}$ such that $S \subseteq P$, and otherwise,  we say that a set $S$ is \emph{uncovered by  $\mc{P}$}.  We say that an element of $x \in P$ is \emph{$P$-exclusive with respect to $\mc{P}$} for $P \in \mc{P}$ if for every $P' \in \mc{P}$ such that $P' \neq P$ and $x \in P'$, we have $|P \cap P'|=1$. We say that $P \in \mc{P}$ is \emph{distinctive with respect to $\mc{P}$} if there exist at least two $P$-exclusive elements. We say that a set $S$ is \emph{$\mc{P}$-wild} if at least two distinct two element subsets of  $S$  are uncovered by $\mc{P}$, and, otherwise we say that
$S$ is \emph{$\mc{P}$-tame}. We say that a family of sets is \emph{$\mc{P}$-tame} if every element of it is $\mc{P}$-tame.

The technical part of the proof of Theorems~\ref{t:ARSexist} and~\ref{t:EPSexist} consists of establishing the following lemma.

\begin{lem}\label{l:partitions}
Let $l$ and $k$ be positive integers such that $l \geq 500 k$, $k^{3/2} \geq 3600l$ and $(l-k)k$ is divisible by $l$. Let $X$ be a set with $|X|=(l-k)k+l$. Then there exist partitions $\mc{Q}=(Q_0,Q_1,Q_2,\ldots,Q_k),\mc{P}_0,\ldots,\mc{P}_4$ of $X$ and a partition $\mc{R}$ of $X-Q_0$ with the following properties:
\begin{description}
	\item[(Q)]  $|Q_0|=l$ and $|Q_j|=l-k$ for every $j \in [k]$,
	\item[(R)]  $|\mc{R}|=l-k$, and $\mc{R}$ and $\mc{Q}$ are transversal,
\end{description}	
 moreover, for every $i \in \{0,1,2,3,4\}$ we have
\begin{description}
	\item[(P1)] $|\mc{P}_i|=l$ and $|P|=k(l-k)/l+1$ for every $P \in \mc{P}_i$, 
	\item[(P2)] $\mc{P}_i$ and $\mc{Q}$ are transversal,
\end{description}	
finally, let $\mc{P}=\mc{R} \bigcup \left(\cup_{j=0}^4\mc{P}_j \right)$, then 
\begin{description}
	\item[(P3)] at least two sets of $\mc{P}_i$ are distinctive with respect to $\mc{P}$,
	\item[(P4)] let $Z \subseteq X$ be such that $|Z| \leq 9$, then for every   $\mc{P}$-tame partition $\mc{P}_*$ of $X - Z$ with $|\mc{P}_*|=l$ there exists $0 \leq i \leq 4$ such that $\mc{P}_*$ coincides with $\mc{P}_i$ on $X - Z$.
\end{description}	
\end{lem}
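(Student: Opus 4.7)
The plan is a randomized construction. First I would fix $\mc{Q}$ and $\mc{R}$ deterministically: let $Q_0$ be any $l$-element subset of $X$, partition the remaining $(l-k)k$ elements into $k$ blocks $Q_1,\ldots,Q_k$ of size $l-k$, and define $\mc{R}$ as any partition of $X\setminus Q_0$ into $l-k$ transversals of $\{Q_1,\ldots,Q_k\}$, which exists since each $Q_j$ with $j\in[k]$ has size $l-k$. This immediately yields (Q) and (R). Next, I would sample $\mc{P}_0,\ldots,\mc{P}_4$ independently and uniformly from the set of partitions of $X$ into $l$ parts of size $k(l-k)/l+1$ that are transversal to $\mc{Q}$; concretely, for each $j\in\{0,1,\ldots,k\}$, an independent uniform injection of $Q_j$ into $[l]$ (a bijection when $j=0$) specifies which part contains each element. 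Properties (P1) and (P2) then hold by construction. For (P3), a part $P\in\mc{P}_i$ fails to be distinctive with respect to $\mc{P}$ only if at least $|P|-1$ of its elements lie in some pair $\{x,y\}\subseteq P$ also contained in another member of $\mc{P}$. For each other partition in $\mc{P}$, the expected number of such pairs inside $P$ is $O(|P|^2/|X|)=O(k/l)$, and a union bound over the $5l$ parts and $5$ other partitions yields (P3) with positive probability.

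The hard part is (P4). My plan is to first establish the following structural claim: with positive probability, every $\mc{P}$-tame subset $S\subseteq X$ of size at least $|P|/2$ satisfies $|S\cap Q|\geq |S|-C$ for some unique $Q\in\mc{R}\cup\bigcup_{j=0}^4\mc{P}_j$, where $C$ is an absolute constant. Writing $A_j(v)$ for the part in the $j$-th partition of $\mc{R}\cup\bigcup_j\mc{P}_j$ containing $v$, and fixing distinct $x,y\in S$, $\mc{P}$-tameness forces all but $O(1)$ elements $z\in S$ to satisfy $z\in A_{j_1}(x)\cap A_{j_2}(y)$ for some pair $(j_1,j_2)$. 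If $j_1=j_2=j$ then $A_j(x)=A_j(y)$ and the pair $(x,y)$ is itself covered by partition $j$; otherwise $(j_1,j_2)$ is a cross pair with $j_1\neq j_2$. The key probabilistic input is that since the $\mc{P}_i$'s are sampled independently and $\mc{Q}$-transversally, each cross-intersection $A_{j_1}(x)\cap A_{j_2}(y)$ with $j_1\neq j_2$ has expected size $O(|P|^2/|X|)=O(k/l)$, and a Chernoff bound ensures that simultaneously over all pairs $x\neq y$ and the constant number of cross indices, all such intersections have size $O(1)$ with high probability. Hence almost all elements of $S$ arise from the same-part case, and some single index $j$ must account for $|S|-O(1)$ of them, establishing the claim.

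Applying the structural claim to each part of a $\mc{P}$-tame $l$-partition $\mc{P}_*$ of $X\setminus Z$ with $|Z|\leq 9$, every $P_*\in\mc{P}_*$ is essentially contained in a unique $Q\in\mc{R}\cup\bigcup_j\mc{P}_j$. Since $|\mc{R}|=l-k<l$, not all $l$ parts of $\mc{P}_*$ can come from $\mc{R}$, and a double-counting argument using the transversality between $\mc{R}$ and each $\mc{P}_i$ (and between distinct $\mc{P}_i$'s) forces a perfect matching between the parts of $\mc{P}_*$ and those of exactly one $\mc{P}_i$, giving (P4). The main obstacle is executing the union bound in the structural claim: there are $\binom{|X|}{|P|}\leq l^{O(k)}$ candidate sets $S$, and the probability that any of them is spuriously $\mc{P}$-tame must vanish. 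The hypotheses $l\geq 500k$ and $k^{3/2}\geq 3600l$ should be exactly tight enough to overpower the entropy of candidates by the concentration of cross-intersection sizes, but tracking the constants explicitly through Chernoff-type tail bounds is the delicate technical part of the proof.
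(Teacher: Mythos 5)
Your overall strategy matches the paper's: fix $\mc{Q},\mc{R}$ deterministically, sample $\mc{P}_0,\ldots,\mc{P}_4$ independently and uniformly from $\mc{Q}$-transversal $l$-partitions, and establish a structural property of $\mc{P}$-tame sets from which (P4) is then deduced. However, there are two problems, one recoverable and one a genuine gap.

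First, the recoverable error. Your intermediate claim that all cross-intersections $A_{j_1}(x)\cap A_{j_2}(y)$ (equivalently, $|P'\cap P''|$ for $P',P''$ in distinct partitions) are $O(1)$ with high probability is false. For two parts of size $\approx k$ inside blocks $Q_j$ of size $l-k$, the probability that $|P'\cap P''|\geq t$ is roughly $\binom{k+1}{t}(l-k)^{-t}\approx (k/l)^t$, which for constant $t$ is a constant bounded away from $0$. A union bound over the $\Theta(l^2)$ pairs of parts therefore cannot give an $O(1)$ bound; the best achievable uniform bound is $O(\sqrt k)$, using $k^{3/2}\geq 3600l$ so that $2^{-\sqrt k}\ll l^{-2}$. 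This $\sqrt k$ bound is in fact exactly what the paper's auxiliary condition (P5) asserts, and it still suffices to prove the paper's (P7): any $\mc{P}$-tame $S$ with $|S|>k/2$ is entirely contained in a single part of $\mc{P}$ (pigeonhole gives some $P'\in\mc{P}$ with $|S\cap P'|\gtrsim k/12$; a second pigeonhole on any $y\in S\setminus P'$ would force $|P'\cap P''|\gtrsim k/72>\sqrt k$). So the structural claim survives, just with the correct $\sqrt k$ threshold.

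The genuine gap is the final step. Once every part of $\mc{P}_*$ is known to sit inside a single part of $\mc{R}\cup\bigcup_i\mc{P}_i$, you still must rule out $\mc{P}_*$ taking its parts from \emph{several} of the families $\mc{R},\mc{P}_0,\ldots,\mc{P}_4$ and tiling $X\setminus Z$ with this mixture. Transversality with $\mc{Q}$ and double-counting do not exclude this: pairwise disjointness of parts from different $\mc{P}_i$'s is not prevented by transversality (two generic parts from different $\mc{P}_i$'s are in fact disjoint with constant probability), so a priori nothing stops a tiling that uses, say, $l/2$ parts of $\mc{P}_1$ and $l/2$ parts of $\mc{P}_2$. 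The paper needs a second, separate probabilistic property — its condition (P6), stating that for any $\mc{P}'_*\subseteq\mc{P}'$ with $|\mc{P}'_*|\geq l/10$ and $\mc{P}''_*\subseteq\mc{P}''$ with $|\mc{P}''_*|\geq\sqrt k/10$ from two distinct families, the unions $\bigcup\mc{P}'_*$ and $\bigcup\mc{P}''_*$ intersect — together with a weight-counting argument (assigning weight $k^2/l$ to elements of $Q_0$ so every part of $\mc{P}$ has weight exactly $k$, and bounding the total weight deficit of $\mc{P}_*$ by $w(Z)\leq 9k^2/l$) to turn such an overlap into a contradiction. Nothing in your sketch supplies (P6) or an equivalent exclusion of mixed tilings, and this is the combinatorially hard core of the lemma.
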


Before proceding to the proof of \cref{l:partitions} we derive Theorems~\ref{t:ARSexist} and~\ref{t:EPSexist} from it.

\begin{proof}[Proof of Theorem~\ref{t:ARSexist}.]
	Given  positive integers $l,k$  satisfying the conditions of \cref{l:partitions} we will construct a $l$-ARS-graph $H$.   Let $X,\mc{Q},\mc{R},\mc{P}_0,\ldots,\mc{P}_4,\mc{P}$ be as in  \cref{l:partitions}. 
	
	Let $V(H)=X$. The edges of $H$ are determined as follows. 	
Let $P'_0,P''_0$ be two  sets in $\mc{P}_0$ distinctive with respect to $\mc{P}$, and let $P_i$ be a distinctive set in  $\mc{P}_i$ for $1 \leq i \leq 4$. Let $\mc{Z}=\{P'_0,P''_0,P_1,\ldots,P_4\}$. For each $P \in \mc{P}$ we join every pair of vertices in $P$ by an edge, and we modify $H[Z]$ for $Z \in \mc{Z}$ as follows. We delete an edge from each of $P'_0$ and $P''_0$, delete all edges incident to a single vertex in $P_1$, two edges sharing an end in $P_2$, a  matching of size two in $P_3$, and edges of a triangle in $P_4$. As the sets in $\mc{Z}$ are distinctive, we can do the deletions so that all the deleted edges are incident to the vertices in the corresponding sets which are exclusive with respect to $\mc{P}$, and so do not belong to any other set in $\mc{P}$. This finishes the description of the construction of $H$.

It is not hard now to verify that the properties (ARS1)-(ARS4) holds. Note that $\mc{Q}$ is a partition of $V(H)$ into  $k+1$ stable sets, and $\mc{R} \cup \{Q_0\}$ is a partition of $V(H)$ into  $l-k$ cliques and a stable set. Thus (ARS1) holds. Partitions  $\mc{P}_1,\ldots,\mc{P}_4$ satisfy (ARS2), and $\mc{P}_0$ satisfies (ARS3) by construction. Finally, suppose for a contradiction that (ARS4) does not hold. Thus there exists a partition $\mc{X}$ of $X$ with $|\mc{X}|=l$ such that $\mc{X} \neq \mc{P}_0$ and every part of $\mc{X}$ induces in $H$ a subgraph with at most one non-edge. Thus by construction every element of $\mc{X}$ is $\mc{P}$-tame. It follows from (P4) that $\mc{X}=\mc{P}_i$ for some $1\leq i \leq 4$, and thus some part of $X$ induces at least two non-edges, a contradiction.
\end{proof}

\begin{proof}[Proof of Theorem~\ref{t:EPSexist}.]
	The proof is analogous to the proof of Theorem~\ref{t:ARSexist} above, the only difference is that we adapt the description of the edges of $H$ to satisfy (EPS2)-(EPS3),  rather than (ARS2)-(ARS4), as follows.
	
	As in the proof of Theorem~\ref{t:ARSexist}, let $P_i$ be a distinctive set in  $\mc{P}_i$ for $1 \leq i \leq 4$ and let $v_i,u_i \in P_i$ be $P_i$-exclusive with respect to $\mc{P}$. Let $Z'=\{u_1,v_1,\ldots,u_4,v_4\}$.
	 For each $P \in \mc{P} -\mc{P}_0$ we join every pair of vertices in $P$ by an edge, and we modify $H[P_i]$ for $1 \leq i \leq 4$. 
	
	We delete the following edges: $u_1v_1$, all edges in $H[P_2]$ incident to $u_2$ or $v_2$,  all edges in $H[P_3]$ incident to $u_3$ or $v_3$, except $u_3v_3$, and all edges  in $H[P_4]$ incident $u_4$, except $u_4v_4$. As before  restricting deletions to the edges incident to the exclusive vertices ensures that these deletions do not affect the subgraphs induced by other sets in $\mc{P}$. Finally, for each $z \in Z'$ we add to $H$ edges joining $z$ to all vertices $x \in X$ such that $\{x,z\}$ is uncovered by $\mc{P}$  and $x \in Q_i$ for some  $1 \leq i \leq k/2$.   
	
	As in the proof  of Theorem~\ref{t:ARSexist}, note that partitions $\mc{Q}$ and $\mc{R} \cup \{Q_0\}$ satisfy (EPS1), and partitions   $\mc{P}_1,\ldots,\mc{P}_4$ satisfy (EPS2), by construction. 
	
	It remains to verify (EPS3). Suppose that $\mc{X}=(X_1,\ldots,X_l)$ is a partition of $X$ violating (EPS3). Let $x_1 \in X_1$ be such that $X_1 - \{x_1\}$ is a clique in $H$, and let  $Z = Z' \cup \{x_1\}$. Then the restriction $\mc{P}_*$ of $\mc{X}$ to $X - Z$ is as in \cref{l:partitions} (P4). Thus $\mc{P}_*$ coincides with $\mc{P}_i$ on $X-Z$ for some $i \in [4]$. However, $P_i$ is not a subset  of a part of $\mc{X}$. Thus there exists $v \in P_i$, $P' \in \mc{P}_i$, $P' \neq P_i$  and $j \in [l]$ such that  $\{v\} \cup (P' - Z) \subseteq X_j$. But $v$ is neither complete nor anticomplete to $P' - Z$ by construction, a contradiction. Thus (EPS3) holds. 
\end{proof}

 \begin{proof}[Proof of \cref{l:partitions}]
 	Clearly, it is possible to select $\mc{Q}$ and $\mc{R}$ as in the lemma statement satisfying (Q) and (R).

Given $\mc{Q}$ and $\mc{R}$, we select $\mc{P}_0,\ldots,\mc{P}_4$  from the set of all partitions of $X$ satisfying (P1) and (P2) uniformly and independently at random. We will  show that with positive probability both (P3) and (P4) hold. Rather than verifying (P4) directly, first, for every pair $\{\mc{P}',\mc{P}''\} \subset \{\mc{R}, \mc{P}_0, \ldots ,\mc{P}_4\}$, we will require that 
\begin{itemize}
		\item[(P5)] $|P' \cap P''| \leq \sqrt{k}$ for all $P' \in \mc{P}'$ and $P'' \in \mc{P}''$,  
		\item[(P6)] For all $\mc{P}'_* \subseteq \mc{\mc{P}'}$ and $\mc{P}''_* \subseteq\mc{P}''$ such that $|\mc{P}'_*| \geq l/10$ and $|\mc{P}''_*| \geq \sqrt{k}/10$ we have $$(\cup_{P' \in \mc{P}'_*}P')  \cap (\cup_{P'' \in \mc{P}''_*}P'') \neq \emptyset.$$
\end{itemize}

Our first goal is to show that each of (P3),(P5) and (P6) is satisfied with probability at least $3/4$.

We start with (P3). We say that $x \in X$ is \emph{$\mc{P}_0$-exclusive} if $x$ is $P$-exclusive with respect to $\mc{P}$ for $P \in \mc{P}_0$ such that $x \in P$.
 Let $\mc{P}'=\mc{R} \cup  \mc{P}_1 \ldots \cup \mc{P}_4$. Fix arbitrary $x \in X$ and let $y(i)$ be the unique element of $Q_i$ such that $\{x,y(i)\}$ is covered by $\mc{P}_0$, if such an element exists. It is easy to see that the probability that  $\{x,y(i)\}$ is covered by $\mc{P}'$ is at most $5/(l-k)$. It follows that the probability that $x$ is not $\mc{P}_0$-exclusive is at most $5k/(l-k)$. Therefore, the expected number of not $\mc{P}_0$-exclusive elements is at most $$|X|\frac{5k}{l-k} = 5k^2 + \frac{5kl}{l-k} \leq 10k^2,$$ and so the probability that there are at least $200k^2$ elements of $X$ which are not $\mc{P}_0$-exclusive is at most $1/20$. It follows that (P3) holds for $\mc{P}_0$ with probability at least $19/20$. By symmetry, (P3) holds for all $i$ with probability at least $3/4$.
 
Moving on to (P5), we say that $P$ is a \emph{full $\mc{Q}$-transversal} if $|P \cap Q| = 1$ for all $Q \in \mc{Q}$. Let $P_1,P_2$ be two full  $\mc{Q}$-transversal chosen uniformly and independently at random. Then $$\Pr[|P_1 \cap P_2| \geq \sqrt{k}] \leq \frac{(k+1)^{\sqrt{k}}}{(l-k)^{\sqrt{k}}} \leq \frac{1}{2^{\sqrt{k}}} < \frac{1}{60l^2}.
$$
Note that $P'$, $P''$ and in (P5) can be considered as subsets of two  full  $\mc{Q}$-transversal chosen uniformly and independently at random. Thus the inequality above and the union bound imply that  (P5) fails with probability at most $1/4$.

Next, for (P6), let  $Y=\cup_{P' \in \mc{P}'_*}P' $, and let  $Z=\cup_{P'' \in \mc{P}''_*}P'' $. Let $Y_i=Q_i \cap Y$ and let $Z_i = Q_i \cap Z$ for every $i \in [k]$. Note that $|Y_i| \geq l/10-k \geq l/20$ for every $i$, as at most $k$ sets in $\mc{P}'$ are disjoint from $Y_i$. 

Let $$B=\{(i,P'') | i \in [k], P'' \in  \mc{P}''_*, Q_i \cap Z = \emptyset\}.$$
As every $P'' \in \mc{P}''$ is disjoint from at most $k - k(l-k)/l$ sets in $\mc{Q}$, we have $$|B| \leq \frac{k^2| \mc{P}''_*|}{l}.$$
Let $J=\{i \in [k] \: | \: |Z_i| \geq \sqrt{k}/30\}$. As every $i \in [k]-J$ belongs to at least $| \mc{P}''_*| - \sqrt{k}/30 \geq 2| \mc{P}''_*|/3$ elements of $B$ we have
$$(k-|J|)\frac{2| \mc{P}''_*|}{3} \leq k^2| \mc{P}''_*|/l,$$
and so $|J| \geq k/2$. We have $$\Pr[Y_i \cap Z_i =\emptyset] \leq \left(1 - \frac{|Y_i|}{l-k} \right)^{|Z_i|} \leq  \left(\frac{19}{20} \right)^{\sqrt{k}/30} \leq \left(\frac{1}{2}\right)^{\sqrt{k}/600}$$ for every $i \in J$. The corresponding events are independent for all $i \in J$, and so we have $\Pr[Y \cap Z =\emptyset] \leq 1/2^{k^{3/2}/1200}.$ Summing over all possible choices of subset $\{\mc{P}',\mc{P}''\}$ and subsets $\mc{P}'_* \subseteq \mc{\mc{P}'}$ and $\mc{P}''_* \subseteq\mc{P}''$, we conclude that (P6) fails with probability at most $$15 \cdot 2^{2l} \cdot  \left(\frac{1}{2}\right)^{k^{3/2}/1200} \leq 15 \cdot \left(\frac{1}{2}\right)^{l} \leq \frac{1}{4},$$ as desired. 

Thus there exist partitions $\mc{P}_0,\ldots,\mc{P}_4$ of $X$ satisfying properties (P1)-(P3),(P5) and (P6). We claim that these properties imply (P4). Clearly, this claim implies the lemma.

First, we will show  that the following additional property holds. 
  \begin{itemize}
  	\item[(P7)] Let $S \subseteq X$ be such that $S$ is $\mc{P}$-tame and $|S| > k/2$ then  $S$ is covered by $\mc{P}$. 
  \end{itemize}
By considering pairs containing arbitrary $x \in S$, we deduce that $|S \cap P'| \geq k/12-1$ for some $P' \in \mc{P}$. Suppose  that there exists $y \in S - P$. By considering the pairs consisting of $y$ and an element of $S \cap P'$ we deduce that there exists $P'' \in \mc{P}, P'' \neq P'$ such that $|P' \cap P''| \geq k/72-2 > \sqrt k$, contradicting (P5). Thus $S \subseteq P'$, and (P7) holds.

Define a weight function $w: X \to \bb{R}_+$ by setting $w(x)=1$ for $x \in X - Q_0$ and $w(x)=k^2/l$ for $x \in Q_0$. Note that $w(P)=k$ for every $P \in \mc{P}$, and $w(X)=kl$.

Let $Z,\mc{P}_*$ be as in (P4). Then $\sum_{P \in \mc{P}_*}w(P) = kl - w(Z)$.  Suppose that there exists $P \in \mc{P}_*$ with $w(P) > k$. Then $|P| \leq k/2$ by (P7). Moreover, $|P \cap Q_0| \leq 2$ as $P$ is $\mc{P}$-tame. Therefore $w(P) \leq 2k^2/l + k/2 < k$, a contradiction.
Thus $w(P) \leq  k$ for every  $P \in \mc{P}_*$, and so   

 \begin{equation}\label{e:pweight}
\sum_{P \in \mc{P}_{**}}(k - w(P)) \leq w(Z) \leq \frac{9k^2}{l} 
\end{equation}
for every $\mc{P}_{**} \subseteq \mc{P}_{*} $.
In particular, we have $$|P| \geq w(P)-\frac{2k^2}{l} \geq k - \frac{11k^2}{l} \geq \frac{9k}{10}$$ for every $P \in  \mc{P}_*$.

 By (P7) there exists $\mc{P}'\in \{\mc{R}, \mc{P}_0, \ldots ,\mc{P}_4\}$ such that at least $l/6$ elements of $\mc{P}_*$ are covered by $\mc{P}'$. If every element of $\mc{P}_*$ is covered by $\mc{P}'$ then (P4) holds.
 
 Thus we assume that there exists  $S \in \mc{P}_*$ such that $S$ is uncovered by $\mc{P}'$. Let $\mc{P}'_*$ be the set of all $P' \in \mc{P'}$ such that there exists $P \in \mc{P}_*$ so that $P \subseteq P'$, and let 
  $$\mc{L} = \{(P,P') \: | \: P \in \mc{P}_*,P'\in \mc{P}'_*, P \subseteq P'  \}.$$ Then \begin{align*}
  \sum_{P' \in \mc{P}'_*}&|P' \cap S| \leq \sum_{(P,P') \in \mc{L}}|P' \cap S|\leq \sum_{(P,P') \in \mc{L}}|P' - P| \leq \sum_{(P,P') \in \mc{L}}(k-w(P)) \leq \frac{9k^2}{l} \leq \frac{k}{5} ,
  \end{align*}
where the penultimate inequality holds by (\ref{e:pweight}).
  Therefore at least $4k/5$ elements of $S$ are covered by an element of $\mc{P}' - \mc{P}'_*$.
  
 By (P5) it follows that $|\mc{P}' - \mc{P}'_*| \geq \sqrt{k}/2$, implying that there exists $$\mc{P}''  \in \{\mc{R}, \mc{P}_0, \ldots ,\mc{P}_4\} - \{\mc{P}'\}$$ such that  at least $\sqrt{k}/{10}$ elements of $\mc{P}_*$ are covered by  $\mc{P}''$. Define  $\mc{P}''_*$ to be the set of all elements of $\mc{P''}$  which contain an element of $\mc{P}_*$. Let $$ I= (\cup_{P' \in \mc{P}'_*}P')  \cap (\cup_{P'' \in \mc{P}''_*}P'' ).$$
 Then $|I| \geq |\mc{P}'|-l/10 \geq l/15$ as at most $l/10$ elements of $\mc{P}'_*$ are disjoint from $\cup_{P'' \in \mc{P}''_*}P'' $ by (P6).  Let $\mc{P}_{**}$ be the set of elements of $\mc{P}_*$ covered by $\mc{P}'_* \cup \mc{P}''_*$.  Repeating the calculations in the previous paragraph with $\mc{P}'_* \cup \mc{P}''_*$ instead of $\mc{P}'_*$, and $I$ instead of $S$, we obtain $$\sum_{P \in \mc{P}}(k-w(P)) \geq |I| \geq l/15,$$ in contradiction to (\ref{e:pweight}).
 \end{proof}

\subsection{Constructing $P_3$-jumbles}\label{s:P3construction}

Let $l$ be a positive integer.
Let $X$ be a finite set with $|X|=l^2$, and let $\mc{R}, \mc{C}, \mc{D}$ be a triple of partitions of $X$ such that $|\mc{R}|=|\mc{C}|=|\mc{D}|=l$, and each part of one of these partitions has exactly one element in common with each part of every other partition. We say that the triple $\mc{B}=(\mc{R},\mc{C},\mc{D})$ is an \emph{$l$-square on $X$}, and we refer to elements of $\mc{R},\mc{C},\mc{D}$ as \emph{rows},\emph{columns} and \emph{diagonals} of $\mc{B}$, respectively, and to all elements of $\mc{R} \cup \mc{C} \cup \mc{D}$  as \emph{lines} of $\mc{B}$. Note that $l$-squares exist for every $l$, indeed they are just different representations of Latin squares.

We use $l$-squares as the building blocks of a more involved construction. An \emph{$l$-pattern} $\mc{Z}$ is a tuple $(P,\mc{B}_1,\ldots,\mc{B}_l)$  such that $P$ is a graph isomorphic to $P_3$, and $\mc{B}_i$ is an $l$-square on a set $X_i$ for every $1 \leq i \leq l$, such that $V(P)$ and $X_1,\ldots,X_l$ are pairwise vertex disjoint. Let $V(\mc{Z})=V(P) \cup X_1 \cup \ldots \cup X_l$. In particular, $|V(\mc{Z})|=l^3+3$. The \emph{lines} of $\mc{Z}$ are the lines of its squares,

Let $G$ be a graph, and let $\mc{B}$ be an $l$-square on $X \subseteq V(G)$. We say that $\mc{B}$ induces \emph{an $(l,r)$-square} in $G$ for an integer $0 \leq r \leq l$ if the rows of $\mc{B}$ and  $r$ diagonals of $\mc{B}$ induce independent sets in $G$, while the columns of $\mc{B}$ and the remaining $l-r$ diagonals induce cliques.

Finally, given an $l$-pattern $\mc{Z}=(P,\mc{B}_1,\ldots,\mc{B}_l)$ we construct a random graph $G=\bf{G}(\mc{Z})$ with $V(G)=V(\mc{Z})$ as follows.  We say that a pair of vertices of $\{u,v\}$ of $V(\mc{Z})$ is \emph{fixed} if either $\{u,v\} \subseteq V(P)$ or $\{u,v\} \subseteq L$ for some  line of $\mc{Z}$, and we say that $\{u,v\}$ is \emph{free}, otherwise.  The intersection of $E(G)$ with the set of fixed pairs is defined deterministically as follows. We require that $G[V(P)]$ coincides with $P$, and that $\mc{B}_i$ induces an $(l,i)$-square in $G$ for every $i$.  Each free pair of vertices forms an edge with probability $1/2$ independently at random. The following lemma is the main result of this section and immediately implies Theorem~\ref{t:existence}.

\begin{lem} Let $\mc{Z}=(P,\mc{B}_1,\ldots,\mc{B}_l)$ be an $l$-pattern. Then the random graph $G=\bf{G}(\mc{Z})$ is a.a.s. an $(P_3,l^2+1)$-jumble.
	
\end{lem}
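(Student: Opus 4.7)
The plan is to verify (S1) by an explicit deterministic construction and (S2) by a probabilistic union bound over partitions and their cluster structures.

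For (S1), I will fix $s \in \{0, 1, \ldots, l^2\}$ and build the required partition using the line structure of the squares. Take $V(P)$ as the $P_3$-inducing part, and partition each $V(\mc{B}_j)$ using $l$ lines: rows contribute $l$ stable sets, columns contribute $l$ cliques, and the diagonals contribute $j$ stable sets together with $l-j$ cliques. Let $s_j \in \{0, j, l\}$ denote the number of stable sets contributed by $\mc{B}_j$; I need $\sum_{j=1}^l s_j = s$. Writing $s = r + ql$ with $0 \le r < l$, I set $s_r = r$ if $r > 0$, set $s_j = l$ for any $q$ of the remaining indices (feasible since $q \le l-1$), and set the rest to $0$. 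This gives the desired partition for every $s$ in the required range.

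For (S2), assume toward a contradiction that $V(G)$ admits a partition $\mc{X} = (X_1, \ldots, X_{l^2+1})$ in which every $G[X_j]$ is $P_3$-free, equivalently a disjoint union of cliques. I apply a union bound over all pairs $(\mc{X}, \mc{C})$, where $\mc{C} = (\mc{C}_1, \ldots, \mc{C}_{l^2+1})$ encodes the clique decomposition of each $X_j$. For such a configuration to be realized by $G$, each pair of vertices lying in the same part must be an edge exactly when they lie in the same block of $\mc{C}$. Since free pairs are independent fair coin flips, the probability of consistency is at most $2^{-N(\mc{X}, \mc{C})}$, where $N(\mc{X}, \mc{C})$ counts free pairs contained in some $X_j$. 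The number of configurations is at most $(l^2+1)^{l^3+3} \cdot \prod_j B_{|X_j|} = 2^{O(l^3 \log l)}$.

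The crucial quantitative step is to show $N(\mc{X}, \mc{C}) = \Omega(l^4)$ for every realizable configuration. Here I argue as follows: if every part were a subset of a single line, then within-part pairs would all be fixed, but since each line has only $l$ vertices, covering $V(\mc{B}_j)$ requires at least $l$ parts per square, giving $\ge l^2$ parts for the squares; combined with the fact that $V(P)$ is itself a $P_3$ and so needs at least $2$ parts, this line-aligned count is at least $l^2 + 2 > l^2 + 1$. Hence at least one part $X_j$ must span multiple lines, or two squares, or mix $V(P)$ with $V(\mc{B}_j)$. Each such deviation introduces many free pairs inside a single part (for instance, a part spanning two rows of the same $\mc{B}_i$ contains the $l^2 - 2l$ cross-row pairs that lie in distinct columns and diagonals, each of which is free). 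A careful accounting shows that saving even one part relative to the $l^2 + 2$ line-aligned baseline costs at least $\Omega(l^2)$ free-pair constraints, and summing over all squares pushes $N$ up to order $l^4$. The union bound then gives $2^{O(l^3 \log l) - \Omega(l^4)} = o(1)$.

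The main obstacle will be making this line-alignment argument robust: the savings of one part could, in principle, be distributed in many different ways (merging two lines in one square, absorbing a $V(P)$-vertex into a line-subset, combining slices from different squares), so the bookkeeping must show a uniform lower bound on $N(\mc{X}, \mc{C})$ across all such reconfigurations. The hardest case is partitions that nearly coincide with a line-aligned one, differing only by a handful of vertices; here the argument must carefully localize enough free pairs to survive the union bound while keeping the entropy cost of the configuration under control.
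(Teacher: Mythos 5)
Your construction for (S1) is essentially the paper's: take diagonals of $\mc{B}_r$ for the ``remainder'' part and rows/columns of the other squares, so that point is fine.

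For (S2), your proposed global union bound does not work as stated. You sum over $2^{O(l^3\log l)}$ configurations (a partition $\mc{X}$ into $l^2+1$ parts together with a clique decomposition of each part) and claim that $N(\mc{X},\cdot)$, the number of within-part free pairs, is $\Omega(l^4)$ for every configuration. This is false, and the step ``summing over all squares pushes $N$ up to order $l^4$'' is a non sequitur: only one part need deviate from line-alignment, and it may touch only one square. Concretely, take $X_1=\{p_1,p_2\}$, $X_2=\{p_3\}\cup R$ with $R$ a row of $\mc{B}_1$, and let the remaining $l^2-1$ parts be lines chosen coherently from each square; then the only within-part free pairs are the $l$ pairs $\{p_3,v\}$ with $v\in R$, so $N=l$. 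The nearly-line-aligned partitions that you correctly flag as the hard case are exactly the ones with $N$ as small as $\Theta(l)$, so no uniform lower bound on $N$ can beat the $2^{O(l^3\log l)}$ entropy of a one-shot union bound over all configurations; you would need a stratified count showing that configurations with a given value of $N$ are few enough, and you have not supplied that.

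The paper avoids this entirely by localizing the argument to individual parts. It defines $G$ to be \emph{regular} if every $P_3$-free induced subgraph $G[X]$ satisfies one of: $X$ is a subset of a line; $|X|\le 3l/5$ and $X-\{x\}$ is a subset of a line for some $x\in X$; or $|X|\le 2l/5$. Regularity is a property of individual vertex subsets, and it implies (S2) deterministically by a counting argument: since $G[V(P)]\cong P_3$, at least two (in fact either two or three) parts of any $P_3$-free partition meet $V(P)$ and so fall into the small cases, forcing the total number of covered vertices to be at most $2l+(l^2-2)l=l^3<l^3+3$. Establishing regularity then needs a union bound only over vertex subsets $X$ of a single fixed size $\lceil 2l/5\rceil$ with $|X\cap L|<l/3$ for every line $L$ --- there are only $2^{O(l\log l)}$ of these --- against a per-set probability $(3/4)^{\Omega(l^2)}$ that $G[X]$ is $P_3$-free, which is shown by exhibiting a ``colony'' of $\Omega(l^2)$ potential $P_3$'s with wing-disjoint free pairs; the near-line cases are then absorbed by a separate Chernoff-type diversity condition $(\star)$. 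Localizing the union bound to sets rather than whole partitions is the essential idea missing from your proposal.
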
	

\begin{proof}
First, using the fixed part of $G$, we show that for every $0 \leq s \leq l^2$ there exists a partition of $G \setminus V(P)$ into $l^2$ parts, inducing $s$ stable sets and $l^2 - s$ cliques, thus verifying that the condition (S1) in the definition of $(P_3,l^2+1)$-jumble always holds. Indeed, let $s = ql+r$ for some $0 \leq q \leq l$ and $0 \leq r \leq l-1$. If $q=l$ then $r=0$ and the desired partition consists of rows of all the squares of $\mc{Z}$. Otherwise, we form the partition by taking the diagonals of $\mc{B}_r$, the rows of $q$ other squares of $\mc{Z}$ and the columns of the remaining squares.

We say that $G$ is \emph{regular} if every $X \subseteq V(\mc{Z})$ such that $G[X]$ is $P_3$-free satisfies one of the following 
\begin{enumerate}
	\item[(X1)] $X$ is a subset of a line of $\mc{Z}$,
	\item[(X2)] $|X| \leq 3l/5$ and $X-\{x\}$ is a  subset of a line of $\mc{Z}$ for some $x \in X$,
	\item[(X3)] $|X| \leq 2l/5$.
\end{enumerate}
Note that if $G$ is regular then $G$ satisfies the condition (S2) in the definition of $(P_3,l^2+1)$-jumble. Indeed, suppose for a contradiction that  $X_1,X_2,\ldots,X_{l^2+1}$ is a partition of $V(G)$ such that $G[X_i]$ is  $P_3$-free for every $i$. Then $V(P)$ intersects at least two distinct part of the partition, and thus we suppose without loss of generality that either $|V(P) \cap X_1| \geq 2$ and  $|V(P) \cap X_2| \geq 1$, or  $|V(P) \cap X_i| = 1$ for $i=1,2,3$. It follows from regularity of $G$ that $|X_1|+|X_2|+|X_3| \leq 2l$ in both cases, and $|X_j| \leq l$ for every $j \geq 4$. Thus $\sum_{i=1}^{k+1}|X_i| \leq l^3<|V(G)|$, a contradiction.

It remains to show that a.a.s. $G$ is regular. We say that $S \subseteq V(G)$ is \emph{$v$-diverse} for $v \in V(G)-S$ if $v$ has at least two neighbors and at least two non-neighbors in $S$. A standard application of the Chernoff and union bounds implies that a.a.s. $G$  satisfies the following condition
\begin{enumerate}
	\item[$(\star)$] Let $X$ be a subset of a line $L$ of $\mc{Z}$ and let $v_1,v_2  \in  V(G) -L$ be distinct. If $|X|\geq 4l/7$ then $X$ is $v_1$-diverse, and if $|X|\geq l/3$ then $X$ is either $v_1$-diverse or $v_2$-diverse.
\end{enumerate}
Thus we assume that $(\star)$ holds for $G$. Note that if $X$ is a subset of a line $L$, and $X$ is $v$-diverse, then $G[X \cup \{v\}]$ is not $P_3$-free.
 
Let $r=\lceil 2l/5 \rceil$. 
We will show that  for fixed $X \subseteq V(\mc{Z})$ with $|X|=r$ such that $|X \cap L| < l/3$  for every line $L$ the probability that $G[X]$ is $P_3$-free is at most $\left(\frac{3}{4}\right)^{l^2/500}$. The union bound implies that a.a.s.  $G[X]$ is not $P_3$-free for any such $X$. Combining this with $(\star)$ we deduce that $G$ is a.a.s. regular.

It remains to verify the above bound on the probability that $G[X]$ is $P_3$-free. We say that an ordered triple $(u,v_1,v_2)$ of vertices of $X$  is a \emph{seagull with wings $\{u,v_1\}$ and $\{u,v_2\}$} if $\{u,v_1\}$ and $\{u,v_2\}$ are free. A \emph{colony} $\mc{C}$ is a collection of seagulls, such that no wing of a seagull is a subset of the vertex set of another seagull. Suppose that for all free pairs of vertices of  $\mc{Z}$ except for the wings of the seagulls in $\mc{C}$ we determined whether they belong to $G$  or not. Conditioned on this event the probability that a given seagull in $\mc{C}$ induces a   $P_3$ in $G$ is at least $1/4$, and the corresponding events are independent for all the seagulls in a colony. Thus  it suffices to find a colony of size $l^2/500$.

Suppose first that $|X \cap L_0| \geq l/11$ for some line $L_0$ of $G$. Let $Y=X  - L_0$. Then $|Y| \geq l/15$ and for every vertex $y \in Y$ there exist at most two vertices in $x \in |X \cap L_0|$ such that $\{x,y\}$ is fixed. Thus for each such $y$ we can find a colony of at least $l/25$ seagulls with wings sharing the vertex $y$ and otherwise disjoint from all the remaining vertices in $X \cap L_0$. Taking the union of such colonies over all $y \in Y$ produces the required colony.

Suppose now that $|X \cap L| \leq l/11$  for every line $L$ of $G$. Choose arbitrary $Y \subseteq X$ with $|Y|=\lfloor l/15 \rfloor$. Then  for every vertex $y \in Y$ there exist at least $2l/33$ vertices in $X-Y$  which form a free pair with $y$. We can now repeat the argument in the previous paragraph.
\end{proof}

\bibliographystyle{alpha}
\bibliography{snorin}

\end{document}